\documentclass[11pt,a4paper]{amsart}
\usepackage[lmargin=0.9in,rmargin=0.9in]{geometry}
\usepackage{graphicx}
\usepackage[T1]{fontenc}
\usepackage[utf8]{inputenc}
\usepackage{mathtools}
\usepackage{amsthm}
\usepackage{lmodern}
\usepackage[english, french]{babel}
\usepackage{amssymb}
\usepackage{amsmath,calligra,mathrsfs}
\usepackage{enumitem}
\usepackage{tikz-cd}
\usepackage[mathscr]{euscript}
\usetikzlibrary{decorations.markings}
\usepackage{hyperref}
\usepackage[noabbrev,nameinlink]{cleveref}

\theoremstyle{plain}
\newtheorem{thm}{Theorem}[section]
\newtheorem*{thm*}{Theorem}
\newtheorem{lem}[thm]{Lemma}
\newtheorem{prop}[thm]{Proposition}
\newtheorem{cor}[thm]{Corollary}

\theoremstyle{definition}
\newtheorem{defn}[thm]{Definition}
\newtheorem{nota}[thm]{Notation}
\newtheorem{constr}[thm]{Construction}
\newtheorem{ex}[thm]{Example}

\theoremstyle{remark}
\newtheorem{rem}[thm]{Remark}

\Crefname{thm}{Theorem}{Theorems}
\Crefname{lem}{Lemma}{Lemmata}
\Crefname{prop}{Proposition}{Propositions}
\Crefname{cor}{Corollary}{Corollaries}
\Crefname{conj}{Conjecture}{Conjectures}
\Crefname{defn}{Definition}{Definitions}
\Crefname{nota}{Notation}{Notations}
\Crefname{constr}{Construction}{Constructions}
\Crefname{ex}{Example}{Examples}
\Crefname{hyp}{Hypothesis}{Hypotheses}
\Crefname{rem}{Remark}{Remarks}

\newcommand{\Z}{\mathbb{Z}}
\newcommand{\Q}{\mathbb{Q}}

\newcommand{\C}{\mathbb{C}}

\newcommand{\AbHull}{\mathfrak{A}}
\newcommand{\vect}{\mathsf{vect}}

\newcommand{\Spec}{\textup{Spec}}

\newcommand{\MNori}{\mathcal{M}}
\newcommand{\MNoriLoc}{\mathcal{ML}}

\newcommand{\MHS}{\mathsf{MHS}}
\newcommand{\VHS}{\mathsf{VHS}}

\newcommand{\Gmot}{\mathcal{G}_{\textup{mot}}}
\newcommand{\Gal}{\textup{Gal}}

\newcommand{\et}{\textup{{\'e}t}}
\newcommand{\alg}{\textup{alg}}

\newcommand{\Artin}{\textup{Art}}
\newcommand{\Hodge}{\textup{Hdg}}
\newcommand{\pure}{\textup{pure}}

\newcommand{\DAct}{\mathbf{DA}_{\textup{ct}}}
\newcommand{\Bti}{\mathsf{Bti}}
\newcommand{\Real}{\mathsf{R}}
\newcommand{\unit}{\mathbf{1}}

\newcommand{\Perv}{\mathsf{Perv}}
\newcommand{\Loc}{\mathsf{Loc}}

\newcommand{\MotLocus}{\mathsf{EL}}
\newcommand{\WeightLocus}{\mathsf{WL}}
\newcommand{\SplitLocus}{\mathsf{SL}}

\newcommand{\twocolim}{\textup{2-colim}}

\newcommand{\Ical}{\mathcal{I}}

\newcommand{\Tannakian}{\mathcal{T}}

\newcommand{\Sfrak}{\mathfrak{S}}

\newcommand\blfootnote[1]{%
	\begingroup
	\renewcommand\thefootnote{}\footnote{#1}%
	\addtocounter{footnote}{-1}%
	\endgroup
}

\title[The exceptional locus of a motivic local system]{The exceptional locus of a motivic local system}
\author{Luca Terenzi}
\address{Fakult{\"a}t f{\"u}r Mathematik\newline
\indent Universit{\"a}t Regensburg\newline
\indent 93040 Regensburg (Germany)}
\email{luca.terenzi@mathematik.uni-regensburg.de}

\makeatletter
\hypersetup{
	pdfauthor={\author},
	pdftitle={\@title},
	colorlinks,
	linkcolor=[rgb]{0.2,0.2,0.6},
	citecolor=[rgb]{0.2,0.6,0.2},
	urlcolor=[rgb]{0.6,0.2,0.2}}
\makeatother

\calclayout

\begin{document}

\maketitle

%\selectlanguage{french}

%\begin{abstract}
%	{\`A} tout système local motivique de Nori sur une variété algébrique complexe lisse et connexe, nous associons un lieu exceptionnel contrôlant la variation de complexité de ses fibres;
%	la définition est donnée en termes de groupes de Galois motiviques et motifs d'Artin.
%	Nous démontrons un analogue motivique du Théorème de Cattani--Deligne--Kaplan, selon lequel le lieu exceptionnel est une réunion dénombrable de sous-variétés algébriques fermées.
%	Nous montrons aussi que le lieu exceptionnel est défini sur tout sous-corps algébriquement clos sur lequel le système local motivique admet un modèle, et stable sous l'action de Galois lorsque ce denier descend à un sous-corps ultérieur.
%	Ceci renforce des résultats précédents d'André dans le cas pur.
%	Nous obtenons une description similaire pour le lieu de scindage de la filtration par le poids motivique.
%	Pour les systèmes locaux $1$-motiviques, ces propriétés passent aux variations de structure de Hodge sous-jacentes.
%\end{abstract}

\selectlanguage{english}

\begin{abstract}
To every Nori motivic local system over a smooth, connected complex algebraic variety, we associate an exceptional locus controlling the variation in the complexity of its stalks;
the definition is given explicitly in terms of motivic Galois groups and Artin motives. 
We prove a motivic analogue of the Cattani--Deligne--Kaplan Theorem, asserting that the exceptional locus is a countable union of closed algebraic subvarieties.
Moreover, we show that it is defined over any algebraically closed subfield over which the motivic local system admits a model, and stable under Galois conjugation when the latter descends to a smaller subfield.
This extends and strengthens previous results by André in the pure case.
We obtain a similar description for the splitting locus of the motivic weight filtration.
%For $1$-motivic local systems, these properties pass to the underlying variations of mixed Hodge structure.
\end{abstract}

\blfootnote{\textit{\subjclassname}: 14F42, 14C15, 14D07, 18M25.}
\blfootnote{\textit{\keywordsname}: Nori motives, motivic Galois groups, Hodge locus, weight filtration.}

\tableofcontents

\section*{Introduction}

\subsection*{Motivation and goal of the paper}

Let $X$ be a smooth, connected complex algebraic variety, and let $V$ be an admissible, graded-polarizable variation of mixed Hodge structure over $X$.
One can think of $V$ as a local system of mixed Hodge structures, and it is natural to ask how the complexity of the stalks $V_x$ varies with respect to $x \in X(\C)$.
This is measured by the (tensorial) Hodge locus of $V$, defined as the set of points $x \in X(\C)$ such that the mixed Hodge structure $V_x$ admits more Hodge tensors than the very general stalk.
A fundamental result describes such a locus as a countable union of strict closed algebraic subvarieties of $X$:
in the case of pure variations, this was proved in Cattani--Deligne--Kaplan's celebrated paper \cite{CDK95};
it was later extended to mixed variations by Brosnan--Pearlstein--Schnell in \cite{BPS10}, where a similar geometric description was provided for the locus over which the weight filtration splits.

The result is particularly interesting when the variation $V$ is of geometric origin -- that is, when it arises from the relative cohomology of $X$-schemes. 
In this case, its Hodge locus is expected to satisfy natural arithmetic properties, related to the Galois-theoretic nature of the corresponding $\ell$-adic local system.
This question, however, has been only partially answered to date.

A variation of geometric origin can be promoted to a Nori motivic local system, the stalks of which are Nori motives rather than mixed Hodge structures.
The main goal of the present paper is to define and study a natural motivic analogue of the (tensorial) Hodge locus, for which we establish unconditionally the same geometric and arithmetic properties.
In the pure case, our results extend and strengthen those obtained by André in his seminal paper on motivated cycles \cite{And96}. 
We also establish analogous properties for the splitting locus of the motivic weight filtration.
Our results are based on the Tannakian theory of Nori motivic local systems and motivic Galois groups, recently introduced by the author in \cite{Ter24Nori} and developed by Jacobsen in \cite{Jac25}, building on Ivorra--Morel's work \cite{IM24}.
In addition to showing the intrinsic robustness of this theory, they confirm its close relation to the theory of variations of mixed Hodge structure and open the way to further investigations, as discussed at the end of this introduction.  

%greatly developed thanks to the work of Ivorra--Morel, whose construction was initiated by Ivorra--Morel in \cite{IM24} and completed by the author in \cite{Ter24Nori}. 
%which has been greatly developed over the last few years, 

%A crucial ingredient is the motivic version of Deligne's theorem of the fixed part, established by Jacobsen in \cite{Jac25}.

\subsection*{History and previous work}

The study of the Hodge locus began with Weil's famous observation about Hodge classes in the relative cohomology of a smooth projective family over a smooth complex algebraic variety, as stated in \cite{Weil}. 
To fix notation, let $p \colon Y \to X$ be such a family, and let $n \geq 0$.
As $x \in X(\C)$ varies, the rational Hodge structures $V_x := H^{2n}(Y_x;\Q)(n)$ assemble into a polarizable pure variation $V$ of weight $0$ over $X$.
The \textit{Hodge locus} of $V$ is defined as the subset of points $x \in X(\C)$ such that the stalk $V_x$ contains more classes of type $(0,0)$ compared to the very general stalk.
Since the Hodge filtration on the stalks of $V$ varies holomorphically, this locus is a countable union of strict closed analytic subvarieties of $X$.
However, the Hodge Conjecture predicts that all rational $(0,0)$-classes on $V_x$ arise from algebraic cycles on the fibre $Y_x$. 
This would imply that the analytic subvarieties forming the Hodge locus are in fact algebraic.

This expectation was established, unconditionally on the Hodge Conjecture and for arbitrary polarizable pure variations, by Cattani--Deligne--Kaplan in \cite[Thm.~1.1]{CDK95}.
As later proved by Brosnan--Pearlstein--Schnell in \cite[Cor.~2]{BPS10}, the result remains true for arbitrary admissible, graded-polarizable variations of mixed Hodge structure:
this is deduced from the pure case via the theory of higher normal functions.
Using the same method, the authors obtained a similar geometric description for the splitting locus of the weight filtration.
All these results, and particularly the proof of \cite[Thm.~1.5]{CDK95}, are based on substantial input from Complex Analysis.

As explained by André in \cite[\S~2]{And92}, it is natural to consider Hodge classes not only on $V$ but on all of its tensor combinations at once, also known as Hodge tensors.
This leads one to replace the usual Hodge locus of $V$ by the \textit{tensorial Hodge locus}, defined as the set of points $x \in X(\C)$ such that $V_x$ contains more Hodge tensors than the very general stalk -- or, equivalently, such that the Mumford--Tate group of $V_x$ is smaller than that of the very general stalk.
The main advantage of the tensorial Hodge locus over the usual one is its amenability to group-theoretic techniques.

If the variation $V$ arises from a smooth projective family $p \colon Y \to X$, the Hodge locus (both in the usual and in the tensorial version) is expected to be defined over any algebraically closed subfield $k \subset \C$ over which the family admits a model, and stable under Galois conjugation when the latter descends to a further subfield:
again, this would follow from the validity of the Hodge Conjecture.
In contrast with the geometric structure of the Hodge locus, its arithmetic properties are not accessible via analytic methods, and they have not been established in full generality yet.
Important partial results have been obtained by work of Voisin \cite{Voisin}, Saito--Schnell \cite{SaitoSchnell}, and Klingler--Otwinowska--Urbanik \cite{KOU23} for variations defined over number fields:
in particular, the expected properties are known to hold for the maximal components of positive period dimension of the tensorial Hodge locus, under suitable hypotheses.
More recently, in \cite{KreutzEll} Kreutz defined the analogue of the tensorial Hodge locus for the $\ell$-adic local system corresponding to $V$ and established the same geometric and arithmetic properties unconditionally for it.
The Mumford--Tate Conjecture would imply that the tensorial Hodge locus agrees with this Galois-theoretic locus, thereby inheriting the arithmetic properties of the latter. 

Variations arising from smooth projective families are basic examples of \textit{variations of geometric origin}.
All such variations are realizations of finer cohomological objects known as \textit{Nori motivic local systems}, defined by the author in \cite[\S~6]{Ter24Nori}:
these can be thought of as local systems with values in the Tannakian category of Nori motives over $\C$, which is the best known candidate for the conjectural category of mixed motives.
Just as mixed Hodge structures are representations of Mumford--Tate groups, Nori motives are representations of \textit{motivic Galois groups}.
However, unlike the category of mixed Hodge structures, the category of Nori motives is defined in terms of a purely category-theoretic universal property.
This makes it often difficult to study its objects and compute their motivic Galois groups concretely.
In any case, it is natural to look for an analogue of the (tensorial) Hodge locus in the motivic setting.
For certain pure motivic local systems, this task was addressed by André in \cite[\S~5.2]{And96}, in the framework of pure motives built out of motivated cycles:
by \cite[Thm.~5.2]{And96}, the motivic Galois groups of the stalks stay constant away from an \textit{exceptional locus}, which is again a countable union of strict closed algebraic subvarieties of $X$.
André's proof realizes Weil's cycle-theoretic arguments in the motivic setting, bypassing the Hodge Conjecture.
This approach, however, does not generalize to arbitrary mixed motivic local systems, since they are not related to algebraic cycles as directly as the pure ones.
It is also unclear whether one can argue by induction starting from the pure case, as in the Hodge-theoretic setting.

\subsection*{Main results}

The main contribution of the present paper is to define and study the exceptional locus of an arbitrary mixed motivic local system.
To explain the general idea in precise terms, let again $X$ be a smooth, connected complex variety.
The Tannakian category $\MNoriLoc(X)$ of Nori motivic local systems over $X$ comes equipped with a monoidal forgetful functor
\begin{equation*}
	\iota_X \colon \MNoriLoc(X) \to \Loc(X)
\end{equation*} 
into the usual Tannakian category of local systems.
Thus, complex points $x \in X(\C)$ parametrize a natural family of fibre functors for $\MNoriLoc(X)$, and paths between points induce isomorphisms between the corresponding fibre functors.
For every $M \in \MNoriLoc(X)$, one gets a family of motivic Galois groups $\Gmot(M,x)$, defined as the Tannaka dual of $\langle M \rangle^{\otimes}$ at $x$:
these naturally assemble into a local system of algebraic groups.
The \textit{exceptional locus} $\MotLocus(M) \subset X(\C)$ is designed to measure how far are the subgroups $\Gmot(M_x) \hookrightarrow \Gmot(M,x)$ from forming a sub-local system.
More precisely, we regard $\Gmot(M_x)$ inside the normal subgroup
\begin{equation*}
	\Gmot^{\odot}(M,x) := \ker\left\{\Gmot(M,x) \twoheadrightarrow \Gmot^{\Artin}(M,x)\right\}.
\end{equation*} 
Here, $\Gmot^{\Artin}(M,x)$ denotes the Tannaka dual of the Artin motivic local systems inside $\langle M \rangle^{\otimes}$:
it is a finite group, conjecturally equal to the group of connected components of $\Gmot(M,x)$.
The subgroups $\Gmot^{\odot}(M,x)$ thus define again a local system of algebraic groups, conjecturally connected, and on can show that they contain indeed the subgroups $\Gmot(M_x)$ (see \Cref{lem:Gmot_fibre-to-Artin}).
We define the exceptional locus by declaring that $x \in \MotLocus(M)$ if and only if the inclusion $\Gmot(M_x) \hookrightarrow \Gmot^{\odot}(M,x)$ is not an isomorphism.
As in the Hodge-theoretic setting, the intuition is that the stalk $M_x$ is almost as complex as $M$ itself for most points $x \in X(\C)$, and this generic behavior drops precisely when $x \in \MotLocus(M)$.
However, due to the inexplicit nature of Nori motives, the mere definition of the exceptional locus does not suggest any evident constraints on its structure.
Nevertheless, our first main result shows that the expected properties are satisfied:
\begin{thm*}[\Cref{thm:MotLocus}, \Cref{prop:MotLocus-Gal}]
	For every $M \in \MNoriLoc(X)$, the following hold:
	\begin{enumerate}
		\item The locus $\MotLocus(M)$ is a countable union of subsets of $X(\C)$ of the form $S(\C)$, with $S$ a strict closed algebraic subvariety of $X$.
		\item Let $k \subset \C$ be a subfield such that $X$ admits a model $X_k$ over $k$ and $M$ admits a model in $\MNoriLoc(X_k)$, and let $\overline{k} \subset \C$ denote its algebraic closure.
		Then:
		\begin{enumerate}
			\item[(i)] All the maximal closed subvarieties of $X$ contained in $\MotLocus(M)$ are defined over $\overline{k}$.
			\item[(ii)] The natural $\Gal(\overline{k}/k)$-action on $X_{\overline{k}}$ stabilizes $\MotLocus(M)$. 
		\end{enumerate}
	\end{enumerate} 
\end{thm*}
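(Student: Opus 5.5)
We plan to reduce the statement to the behaviour of individual tensor constructions, in the same spirit as the passage from the usual to the tensorial Hodge locus.

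\textbf{Step 1: express the exceptional locus through sub-objects.} By Tannakian duality, $\Gmot(M_x) \hookrightarrow \Gmot^{\odot}(M,x)$ fails to be an isomorphism exactly when there is a tensor construction $T = T^{a,b}(M)$ and a rank-one (or, after taking Chevalley lines, some) sub-object of $T_x$ in the category of Nori motives over $\C$ that does not come from a sub-object of $T$ in $\MNoriLoc(X)$ after twisting by Artin objects. More concretely, we would use the following reformulation, modulo Artin motivic local systems: $x \in \MotLocus(M)$ if and only if for some $T \in \langle M \rangle^{\otimes}$ the motive $T_x$ has a motivic "fixed class", i.e.\ a morphism $\unit \to T_x$, that is not the restriction of a global section of $T$ over some finite étale cover of $X$. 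Here \Cref{lem:Gmot_fibre-to-Artin} ensures that Artin local systems become trivial at stalks, so passing to finite étale covers is harmless. Since $\langle M \rangle^{\otimes}$ contains only countably many isomorphism classes of relevant tensor constructions $T^{a,b}(M)$, it suffices to show that for each fixed $T$ the locus $\MotLocus_T$ of points where $\textup{Hom}(\unit, T_x)$ jumps is a countable union of closed algebraic subvarieties.

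\textbf{Step 2: spread out motivic classes.} A morphism $\unit \to T_x$ of Nori motives over $\C$ is defined over some finitely generated subfield, so it comes from a morphism over a variety $Z$ of finite type over $k$ equipped with a map $Z \to X$, with $x$ in the image of a $\C$-point. By the motivic theorem of the fixed part (Jacobsen), a section over a smooth connected locally closed $Z \to X$ extends from the very general point of $Z$ to all of $Z$. Countably many pairs $(Z, \text{section})$ arise, because Nori motives over countable fields have countable Hom sets. Taking images in $X$ and closing them, we see that the closure of each image lies in the locus, since jumping is a closed condition by semicontinuity of the dimension of fixed parts along a subvariety. Strictness follows because at the very general point no jump occurs. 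This proves (1).

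\textbf{Step 3: arithmetic properties.} When $M$ has a model over $k$, everything in Step 2 can be performed with $k$-models, and $\textup{Aut}(\C/\overline{k})$ acts on $X(\C)$ preserving $\MotLocus(M)$, because Nori motives and motivic Galois groups are transported along field automorphisms. A countable union of closed subvarieties that is stable under $\textup{Aut}(\C/\overline{k})$ must have its maximal components defined over $\overline{k}$: each such component has only countably many conjugates, and a variety with countably many $\textup{Aut}(\C/\overline{k})$-conjugates is defined over $\overline{k}$. This gives (i). Statement (ii) follows because $\Gal(\overline{k}/k)$ lifts to $\textup{Aut}(\C/k)$, which preserves the model.

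\textbf{Main obstacle.} The hardest step is Step 2, specifically showing that a motivic class on a single stalk spreads to a closed algebraic subvariety on which it remains motivic. This requires a specialization statement for Nori motives that are defined only by a universal property. We would approach it by combining spreading out over finitely generated fields with Jacobsen's fixed-part theorem applied on the normalization of the Zariski closure.
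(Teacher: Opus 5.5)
There is a genuine gap, and it sits in the reduction of Step 1.

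\textbf{Step 1.} The exceptional locus is defined by the proper inclusion $\Gmot(x^{\dagger}M)\subsetneq\Gmot^{\odot}(M,x)$. The sub-object (Chevalley line) formulation in your first sentence is a correct translation of this. The ``more concrete'' version you then use, via jumps of $\textup{Hom}(\unit,T_x)$, captures only one direction.

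\begin{itemize}
\item A morphism $\unit\to T_x$ that is not the stalk of a section over a finite étale cover does force $x\in\MotLocus(M)$.
\item The converse requires $\Gmot(x^{\dagger}M)$ to be cut out inside $\Gmot^{\odot}(M,x)$ by its invariant tensors. Chevalley only gives you a stable line on which the group acts through a character. Killing that character requires the corresponding rank-one motive to be available in $\langle M\rangle^{\otimes}$; in Hodge theory this is exactly where one uses that rank-one Hodge structures are Tate twists.
\item Suppose $\Gmot(x^{\dagger}M)$ were a proper epimorphic subgroup of $\Gmot^{\odot}(M,x)$, like a Borel subgroup of $\mathrm{SL}_2$. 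It would then have the same invariants in every tensor construction, so $x$ would lie in $\MotLocus(M)$ without lying in any $\MotLocus_T$.
\end{itemize}

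The paper states explicitly in its introduction that detection of motivic Galois groups by invariant tensors is an open problem, and for this reason it never passes through tensors. So even a complete proof that each $\MotLocus_T$ is a countable union of closed subvarieties would only describe $\bigcup_T\MotLocus_T\subseteq\MotLocus(M)$.

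\textbf{Step 2.} Independently of Step 1, this step asserts rather than proves the two facts that carry the theorem.

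\begin{itemize}
\item ``At the very general point no jump occurs'' is the key claim, and it is not what the theorem of the fixed part says. That theorem makes monodromy invariants motivic; it does not make motivic classes at a single stalk monodromy-invariant. Your spreading-out idea, combined with the finite index of the image of $\pi_1(Z)$ for a dominant $Z\to X$ and the fixed part on a finite étale cover, plausibly yields the tensor version of this claim. By the previous paragraph, that version is not enough.
\item ``Jumping is a closed condition by semicontinuity'' has no a priori justification. Zariski upper semicontinuity of $x\mapsto\dim\textup{Hom}(\unit,T_x)$ is essentially the statement to be proved.
\end{itemize}

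The paper works directly with groups, in the following steps.

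\begin{itemize}
\item Fix a countable algebraically closed field of definition $k$, and let $z$ be Zariski-dense in $X_k$.
\item Identify $\Gmot(M,z)$ and its Artin quotient with $\Gmot(N;\Sigma_z)$ and $\Gmot^{\Artin}(N;\Sigma_z)$, where $N$ is the corresponding motive over $k(X_k)$ (\Cref{cor:Gmot_generic}, \Cref{cor:Gmot_sigma=Sigma}).
\item Identify $\Gmot(z^{\dagger}M)$ with $\Gmot(\overline{N};\overline{\Sigma}_z)$, by invariance under the extension $\C/\overline{k(X_k)}$ of algebraically closed fields (\Cref{cor:Gmot_inv}).
\item \Cref{cor:ses_Gmot-Artin} then gives $\Gmot(z^{\dagger}M)=\Gmot^{\odot}(M,z)$.
\end{itemize}

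Closedness is obtained without any semicontinuity. Take a smooth irreducible $S_k$ with Zariski-generic point $t$. The key claim applied to $i^{\dagger}M$ shows that $t\in\MotLocus(M)$ if and only if $\Gmot^{\odot}(i^{\dagger}M,\cdot)\hookrightarrow\Gmot^{\odot}(M,\cdot)$ is proper. This is a condition on local systems of groups over $S$, so it holds at every point of $S(\C)$. Singular $S_k$ is handled by embedded resolution together with \Cref{lem:MotLocus_birat}.

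\textbf{Step 3.} This is fine once (1) is known. The $\textup{Aut}(\C/\overline{k})$-stability argument is a valid alternative route to (i); the paper instead gets (i) for free by choosing the countable field of definition inside $\overline{k}$. Your treatment of (ii) is essentially the paper's argument.
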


Both the definition of the exceptional locus and the statement of the main theorem are inspired by André's \cite[Thm.~5.2]{And96}. 
Our treatment, however, builds on the robust structural theory of Nori motivic local systems:
this makes our definition more explicit and our arguments simpler.

While the first part of the theorem pertains to the geometry of the exceptional locus, the second part describes its arithmetic properties.
In the proof, however, we establish the first part together with assertion (i) in the second part at once:
the idea is that every motivic local system admits a model over some countable subfield $k \subset \C$, and $X$ contains only a countable amount of closed subvarieties defined over $\overline{k}$.
The key step is to understand the motivic Galois group $\Gmot(M_x)$ when $x$ is dense for the Zariski topology of $X_{\overline{k}}$, in which case we always have $x \notin \MotLocus(M)$.
To show this, we reinterpret motivic local systems over $X_k$ as motives over the function field $k(X_k)$, and we relate their stalks at Zariski-dense points to motives over the algebraic closure $\overline{k(X_k)}$.
This strategy relies on the invariance of motivic Galois groups under extensions of algebraically closed fields (see \Cref{prop:MNoriLoc_inv} and \Cref{cor:Gmot_inv}), which is an important result on its own.
Once the key step is achieved, the rest of the proof is a simple Noetherian induction argument.
We need to use Hironaka's classical work on Embedded Resolution of Singularities \cite{Hir64} to treat possibly singular closed subvarieties of $X_{\overline{k}}$.
To this end, we check in advance that the exceptional locus is compatible with birational morphisms (see \Cref{lem:MotLocus_birat}).

Assertion (ii) in the second part of the theorem is proved using similar ideas.
The only point that needs some care is the functoriality of motivic local systems and motivic Galois groups under $k$-automorphisms of $X_{\overline{k}}$ (see \Cref{constr:f^*-Gal}).
Once these technical details are settled, the proof comes down to the invariance of $M$ under Galois conjugation.
In fact, our argument shows that the motivic Galois groups at $\overline{k}$-rational points are constant along $\Gal(\overline{k}/k)$-orbits.  

Using the same methods, we are able to study the \textit{weight-splitting locus} $\WeightLocus(M) \subset X(\C)$.
Motivic local systems carry a canonical weight filtration, which agrees with the weight filtration on each stalk up to shifting suitably.
We define the weight-splitting locus by declaring that $x \in \WeightLocus(M)$ if and only if the motive $M_x$ is the direct sum of its weight-graded pieces -- or, equivalently, semi-simple in $\MNori(\C)$.
As in the previous case, the structure of the weight-splitting locus is not clear from the definition.
Our second main result describes it as follows:

\begin{thm*}[\Cref{thm:WeightLocus}, \Cref{prop:WeightLocus-Gal}]
	Let $M \in \MNoriLoc(X)$ be a motivic local system which is not semi-simple.
	Then, the following hold:
	\begin{enumerate}
		\item The locus $\WeightLocus(M)$ is a countable union of subsets of $\MotLocus(M)$ of the form $S(\C)$, with $S$ a strict closed algebraic subvariety of $X$.
		\item Let $k \subset \C$ be a subfield such that $X$ admits a model $X_k$ over $k$ and $M$ admits a model in $\MNoriLoc(X_k)$, and let $\overline{k} \subset \C$ denote its algebraic closure.
		Then:
		\begin{enumerate}
			\item[(i)] All the maximal closed subvarieties of $X$ contained in $\WeightLocus(M)$ are defined over $\overline{k}$.
			\item[(ii)] The natural $\Gal(\overline{k}/k)$-action on $X_{\overline{k}}$ stabilizes $\WeightLocus(M)$.
		\end{enumerate}
	\end{enumerate}
\end{thm*}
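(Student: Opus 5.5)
Since $M$ is not semi-simple, the first step is to show that $\WeightLocus(M)\subset\MotLocus(M)$ and that $\WeightLocus(M)\neq X(\C)$. I would work in $\Gmot(M,x)$ with its unipotent radical $U(M,x)$, the Tannaka dual of the quotient by the weight filtration. Semi-simplicity of $M_x$ in $\MNori(\C)$ means that $\Gmot(M_x)$ acts on $\omega_x(M)$ through a reductive group, i.e. that $\Gmot(M_x)\cap U(M,x)$ acts trivially on $\omega_x(M)$.

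If $x\notin\MotLocus(M)$, then $\Gmot(M_x)=\Gmot^{\odot}(M,x)$. Since $\Gmot^{\Artin}(M,x)$ is finite and Artin motives are pure of weight $0$, the group $\Gmot^{\odot}(M,x)$ contains $U(M,x)$, at least up to finite index. As $U(M,x)$ is unipotent and connected, it in fact lies inside $\Gmot^{\odot}(M,x)$. Because $M$ is not semi-simple, $U(M,x)$ acts nontrivially, so $M_x$ is not semi-simple and $x\notin\WeightLocus(M)$. This gives $\WeightLocus(M)\subset\MotLocus(M)$.

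For the structural statements I would copy the strategy of \Cref{thm:MotLocus}. Choose a countable field of definition $k$ for $X$ and $M$, and use Noetherian induction over the countably many closed subvarieties $S\subset X_{\overline{k}}$. For each such $S$, first pass to a resolution $\widetilde S\to S$ using \Cref{lem:MotLocus_birat} (an analogous birational-invariance statement for $\WeightLocus$ should follow, since it only concerns stalks). Then consider the restriction $M|_{\widetilde S}$ to a dense open of $\widetilde S$. There are two cases:
\begin{itemize}
\item if $M|_{S}$ is semi-simple as a motivic local system, then every stalk over $S(\C)$ is semi-simple, since pullback to a point is exact and, by semi-simplicity of pure local systems, direct sums of weight-graded pieces restrict to direct sums;
\item if $M|_S$ is not semi-simple, then by the first paragraph applied to $M|_{S}$, only points of $\MotLocus(M|_S)$ can lie in $\WeightLocus$.
\end{itemize}

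In the second case, \Cref{thm:MotLocus} applied to $M|_S$ says that $\MotLocus(M|_S)$ is a countable union of strict $\overline{k}$-subvarieties of $S$, and we recurse on these. The result is that $\WeightLocus(M)$ is exactly the union of those $S(\C)$, with $S$ defined over $\overline{k}$, on which $M|_S$ is semi-simple. This union is countable, each $S$ is strict and defined over $\overline{k}$, and each lies in $\MotLocus(M)$. This proves part (1) and (2)(i).

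For (2)(ii), let $\sigma\in\Gal(\overline{k}/k)$. By the functoriality in \Cref{constr:f^*-Gal}, together with $\sigma^*M\simeq M$ (since $M$ comes from $X_k$), we get $M|_{\sigma(S)}\simeq\sigma^*(M|_S)$. Because $\sigma^*$ is a tensor equivalence, it preserves semi-simplicity. Hence the set of $S$ above is stable under $\sigma$, and so is $\WeightLocus(M)$.

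The main obstacle I expect is the key step that, for a Zariski-dense point $x$ of $S_{\overline{k}}$, $M_x$ is semi-simple iff $M|_S$ is. This should follow from $x\notin\MotLocus(M|_S)$, which is the key step of \Cref{thm:MotLocus}, combined with the unipotent-radical argument above.
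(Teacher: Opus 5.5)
Your proof is correct, and its overall architecture is the paper's. You first show $\WeightLocus(M)\subset\MotLocus(M)$. You then use the key claim from the proof of \Cref{thm:MotLocus} at Zariski-generic points of (resolutions of) $\overline{k}$-subvarieties. Finally you handle Galois conjugation.

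\textbf{Main difference: the input for the inclusion.} The paper gets non-reductivity of $\Gmot^{\odot}(M,x)$ from \Cref{prop:M_semisimple=Gmot0_reductive}. Its proof passes to a point $z$ generic over a countable field of definition. It then identifies $\Gmot^{\odot}(M,z)\subset\Gmot(M,z)$ with $\Gmot(\overline{N})\subset\Gmot(N)$ for a motive $N$ over the function field, and applies the Galois-averaging descent of \Cref{lem:split-bc} and \Cref{cor:semisimple-bc}.

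Your argument is that the unipotent radical is connected in characteristic $0$, so it dies in the finite quotient $\Gmot^{\Artin}(M,x)$. This is purely group-theoretic and works at every point. It also shows that \Cref{prop:M_semisimple=Gmot0_reductive} is automatic: a finite-index closed subgroup has the same identity component, hence the same unipotent radical.

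\textbf{Galois stability (2)(ii).} The paper argues pointwise: $f(x)^{\dagger}M\simeq x^{\dagger}f^*M\simeq x^{\dagger}M$ by \Cref{lem:f^*-stalks}. Your version works at the level of subvarieties, transporting semi-simplicity of $M|_S$ to $M|_{\sigma(S)}$. That requires extending \Cref{constr:f^*-Gal} to the non-$\C$-linear isomorphisms $\widetilde S\to\sigma(\widetilde S)$, compatibly with restriction. This is extra bookkeeping for the same conclusion, and your own key step already gives the pointwise characterization, so the pointwise form is preferable.

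\textbf{Minor points.}
\begin{enumerate}
\item The first bullet really needs \Cref{lem:f^*-pure}: stalk functors preserve purity. Exactness alone does not preserve semi-simplicity.
\item With the perverse normalization, Artin local systems are pure of weight $\dim X$, not $0$. Your argument never uses this, so nothing breaks.
\item The recursion through $\MotLocus(M|_S)$ is unnecessary. As your last paragraph notes, each point is Zariski-generic in a unique $\overline{k}$-subvariety, and that suffices.
\item The birational statement you want is \Cref{lem:i^*WeightLocus}, which holds for arbitrary morphisms. So a plain resolution $\widetilde S\to S$ suffices where the paper invokes embedded resolution.
\end{enumerate}
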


In fact, the same result holds for splitting loci of general short exact sequences of motivic local systems (see \Cref{thm:ext-splitting_locus} and \Cref{prop:SplitLocus-Gal}).
Compared to the proof of the previous result, the main new ingredient is the fact that semi-simplicity can be checked after quotienting out the contribution of Artin motives (see \Cref{prop:M_semisimple=Gmot0_reductive}).
As a by-product of our main results, we are able to characterize Artin and semi-simple motivic local systems in terms of their stalks (see \Cref{cor:Artin=trivial_stalk} and \Cref{cor:semi-simple_stalk}), and we obtain a similar characterization for split short exact sequences of motivic local systems (see \Cref{cor:split_stalk}). 

It is natural to wonder how close our motivic results are to the corresponding Hodge-theoretic results.
In particular, one might ask whether the results of the present paper could lead to a motivic proof of the main results of \cite{CDK95,BPS10} for variations of geometric origin.
Unfortunately, we are not able to say much about this point.
By the universal property of Nori's category of motives, there is a canonical monoidal Hodge realization functor
\begin{equation*}
	\iota_{\C}^{\Hodge} \colon \MNori(\C) \to \MHS
\end{equation*}
towards Deligne's category of graded-polarizable rational mixed Hodge structures.
Thanks to Tubach's recent work \cite{Tub25}, for any smooth complex variety $X$ there is an analogous functor
\begin{equation*}
	\iota_X^{\Hodge} \colon \MNoriLoc(X) \to \VHS(X)
\end{equation*}
towards the category of admissible, graded-polarizable variations of mixed Hodge structure.
Both are conjectured to be fully faithful, with essential image stable under subquotients.
Thus, given a motivic local system $M$ with underlying variation $V$, the motivic Galois group of $M_x$ is expected to coincide with the Mumford--Tate group of $V_x$ for every $x \in X(\C)$.
In particular, the exceptional locus of $M$ is expected to coincide with the tensorial Hodge locus of $V$;
and similarly for the weight-splitting loci.
Even if the above fullness conjecture remains widely open in general, its consequences about special loci might be easier to prove in principle.
However, for the time being, we are only able to observe that the motivic weight-splitting locus is contained inside the Hodge-theoretic one, as a consequence of \cite[Prop.~6.9]{Tub25}.
Concerning the exceptional locus, neither of the two inclusions seems evident.
In fact, we would be very surprised if it were possible to show that the two loci coincide just by comparing the definitions:
indeed, this would yield an alternative proof of the main results of \cite{CDK95,BPS10} in the geometric origin case with essentially no input from Complex Analysis, and it would also settle the difficult questions about fields of definition of (tensorial) Hodge loci.

Instead of considering the above problem in full generality, one can focus on Tannakian subcategories of motivic local systems for which the fullness conjecture is known to hold.
At present, the largest such Tannakian subcategory is the one generated by the $1$-motivic objects:
this has been shown by André in \cite[Thm.~1.2.1]{And21} for motives over a field, and extended by Tubach in \cite[Thm.~5.2.12]{TubThesis} to motivic local systems.
Thus, for $1$-motivic local systems and their tensor-combinations, the results of the present paper imply the analogous Hodge-theoretic results, including those about fields of definition.
To the best of our knowledge, all these results are already known, although we were unable to find an explicit reference:
indeed, $1$-motivic variations and their tensor-combinations can be obtained by inverse image from canonical variations over mixed Shimura varieties, and their rich arithmetic theory already subsumes all the geometric and arithmetic properties of special loci.
In fact, the theory of Hodge cycles on Shimura varieties is already used in the proof of \cite[Thm.~1.2.1]{And21} (which builds upon \cite[Thm.~0.6.2]{And96}).
On the other hand, it is worth stressing that the extension of the fullness conjecture from motives to motivic local systems is not specific to $1$-motivic objects, as it relies on general Tannakian arguments.
Therefore, the results of the present paper might have more interesting applications in the future, once new cases of the fullness conjecture are established.

\subsection*{Related and future work}

As already mentioned, our main results about the exceptional locus are inspired by André's \cite[Thm.~5.2]{And96}, formulated in the language of pure motives built out of motivated cycles.
As proved by Arapura in \cite[Thm.~6.4.1]{Ara13}, André's category of pure motives over $\C$ is canonically equivalent to the subcategory of semi-simple objects in $\MNori(\C)$.
André's result describes the variation in the motivic Galois groups of suitable families of pure motives (as defined in \cite[\S~5.2]{And96}):
the main examples are given by the relative cohomology of smooth projective families of complex varieties.
Every family of pure motives in André's sense defines a pure motivic local system in Nori's sense, with the same motivic Galois groups.
For such pure motivic local systems, our results are covered by \cite[Thm.~5.2]{And96}, although in a slightly less precise form.
However, we do not believe that every pure motivic local system over a general complex variety arises from a family of pure motives in André's sense.
Therefore, our results should be more general than André's even in the pure case.

The main advantage of working directly in the setting of Nori motives is the existence of a well-behaved Tannakian theory of motivic local systems over general varieties.
Firstly, our definition of the exceptional locus is more explicit than André's.
One could still define Tannakian categories of pure motivic local systems using André's language of motivated cycles, but only over function fields;
this seems to be of little help in order to study the exceptional locus.
Secondly, many of the results of \cite[\S~5.2]{And96} can be obtained more naturally within our setting.
For instance, in André's work, the fact that every motivic Galois group over $\C$ is realizable over some countable algebraically closed subfield is deduced from the main result about the exceptional locus.
In our work, by contrast, the same fact follows easily from the general theory of motivic local systems --
and, as explained above, it is used crucially in our study of the exceptional locus.

Let us point out that the general ideas behind the proof of our main results are very similar to those of Kreutz's paper \cite{KreutzEll}, where analogous results are obtained in the setting of $\ell$-adic local systems.
While we were not aware of Kreutz's work during the preparation of the present paper, we convinced ourselves that they share indeed the same formal structure:
the key properties of motivic Galois groups used here mimic well-known properties of étale fundamental groups used there.
Kreutz's definition of the $\ell$-adic exceptional locus, however, involves unavoidably the fields of definition of closed subvarieties in the ambient complex variety.
By contrast, our definition of the motivic exceptional locus is closer in spirit to the original Hodge-theoretic notion, as it takes place purely in the setting of complex varieties:
it is the proof of our main result that unveils its arithmetic nature.

Kreutz's papers \cite{KreutzAbs,KreutzEll} also investigate the analogue of the tensorial Hodge locus in the setting of motives for absolute Hodge cycles.
A refined version of the Mumford--Tate Conjecture predicts that this absolute tensorial Hodge locus should coincide both with the classical tensorial Hodge locus and with the $\ell$-adic exceptional locus
By \cite[Thm.~3.4]{KreutzEll}, this conclusion holds unconditionally for the part of positive period dimension, under suitable hypotheses. 
It will be interesting to see whether a similar comparison result can be proved about our motivic exceptional locus. 

More generally, we plan to refine our understanding of the motivic exceptional locus in future work, inspired by the remarkable developments in the study of the Hodge locus over the past few years.
The tensorial Hodge locus of positive period dimension is further analyzed in Klingler--Otwinowska's paper \cite{KO21}, where it is shown that, under suitable hypotheses, it satisfies a neat dichotomy:
it is either Zariski-dense or algebraic.
As further shown in Baldi--Klingler--Ullmo's work \cite{BKU}, it is the level of a variation that dictates which alternative holds.
In the case of variations defined over number fields, the abundance of non-exceptional points with algebraic coordinates has been recently established in Baldi--Binyamini--Urbanik's paper \cite{BaBiUr}.
Their methods also settle new cases of the Mumford--Tate Conjecture beyond the range of $1$-motives.
In principle, the same methods could be helpful to attack new cases of the fullness conjecture for the Hodge realization of Nori motives. 

Due to the inexplicit definition of Nori's category, motivic Galois groups remain more mysterious than Mumford--Tate groups.
For instance, while it is known that Mumford--Tate groups of mixed Hodge structures are detected by their invariant tensors (see \cite[Prop.~3.1(c), Rmk.~3.2(b)]{DM82Ab} or \cite[Lem.~2(a)]{And92}), establishing the same property for motivic Galois groups is an open problem.
Thus, our motivic exceptional locus does not admit an immediate interpretation in terms of exceptional motivic tensors on the stalks of a motivic local system.
It will be interesting to study this question more thoroughly, for example by considering a motivic analogue of the usual (as opposed to tensorial) Hodge locus and adapting the main results of the present paper accordingly.

\subsection*{Structure of the paper}

The basic definitions and results about Nori motives, motivic local systems, and motivic Galois groups are summarized in \Cref{sect:MNori}.
In \Cref{sect:Gmot-wow}, they are complemented by more specific results relating different motivic Galois groups;
some of them are already known, although not always available in the form needed for our purposes in the literature.
The core of the paper is \Cref{sect:MotLocus}, where we define the exceptional locus and establish its main geometric and arithmetic properties.
Following the same route, in \Cref{sect:WeightLocus} we study the splitting locus of the motivic weight filtration and, more generally, splitting loci of motivic short exact sequences.
%The final \Cref{sect:appli_1mot} describes our applications to variations of Hodge structure underlying $1$-motivic local systems.

\subsection*{Acknowledgments}

I am grateful to David Urbanik for his detailed answers to my questions about Hodge loci and $1$-motivic variations. 
I would also like to thank Joseph Ayoub, Annette Huber-Klawitter, and Emil Jacobsen for useful conversations.

The author was supported by the Deutsche Forschungsgemeinschaft through the Walter Benjamin Programme, project number 568083338.

\section*{Notation and conventions}

\subsection*{Tannakian categories}
\begin{itemize}
	\item Our conventions on Tannakian categories follow those of \cite{DM82}.
	All Tannakian categories considered in the present paper are neutral Tannakian over $\Q$.
	\item By a \textit{Tannakian subcategory} of a given Tannakian category $\Tannakian$ we mean a strictly full abelian subcategory of $\Tannakian$ which contains the monoidal unit of $\Tannakian$ and is stable under subquotients, tensor products, and duals:
	endowed with the induced monoidal structure, it is itself a Tannakian category.
	\item Given a Tannakian category $\Tannakian$, for every object $A \in \Tannakian$ we let $\langle A \rangle^{\otimes}$ denote the Tannakian subcategory of $\Tannakian$ \textit{generated} by $A$:
	that is, the intersection of all Tannakian subcategories of $\Tannakian$ containing $A$.
	\item Given two Tannakian categories $\Tannakian_1$ and $\Tannakian_2$, by a \textit{monoidal functor} $F \colon \Tannakian_1 \to \Tannakian_2$ we mean an exact functor endowed with compatibility isomorphisms with respect to the tensor product respecting associativity, commutativity, and unit constraints (as defined in \cite[\S~1]{DM82}).
\end{itemize}

\subsection*{Algebraic and Analytic Geometry}
\begin{itemize}
	\item We typically work over a field $k$ of characteristic $0$ endowed with a complex embedding $\sigma \colon k \hookrightarrow \C$.
	When $k = \C$, we choose $\sigma$ to be the identity.
	\item By a \textit{$k$-variety} we mean a reduced, separated $k$-scheme of finite type.
	By a morphism of $k$-varieties we always mean a $k$-morphism.
	\item For every $k$-variety $X$, we let $a_X \colon X \to \Spec(k)$ denote the structural morphism.
	\item Fix a complex embedding $\sigma \colon k \hookrightarrow \C$. 
	For every $k$-variety $X$, we use the following notation:
	\begin{itemize}
		\item We let $X^{\sigma}$ denote the associated complex-analytic space:
		that is, the space of complex points $(X \times_{k,\sigma} \C)(\C)$ endowed with the complex-analytic topology.
		When $k = \C$, we write $X(\C)$ in place of $X^{\sigma}$.
		\item We let $D^b_{\sigma}(X)$ denote the stable $\infty$-category of $k$-algebraically constructible complexes with $\Q$-coefficients over $X^{\sigma}$.
        Moreover, we let $\Perv_{\sigma}(X)$ denote the heart of the perverse $t$-structure on $D^b_{\sigma}(X)$ (as defined in \cite[\S~2]{BBD82}).
		\item If $X$ is smooth and geometrically connected over $k$, we write $\Loc_{\sigma}(X)$ for the Tannakian category of local systems with $\Q$-coefficients over $X^{\sigma}$, and we always identify it with a full abelian subcategory of $\Perv_{\sigma}(X)$ (as in \cite[\S~4]{BBD82}).
		For every point $x \in X^{\sigma}$, we let $\pi_1^{\alg}(X^{\sigma},x)$ denote the Tannaka dual of $\Loc_{\sigma}(X)$ with respect to the fibre functor at $x$.
		Similarly, given $L \in \Loc_{\sigma}(X)$, we write $\pi_1^{\alg}(L,x)$ for the Tannaka dual of $\langle L \rangle^{\otimes}$ with respect to the same fibre functor.
	\end{itemize}  
\end{itemize}

\subsection*{Nori motivic sheaves}
\begin{itemize}
	\item For every $k$-variety $X$, we let $\MNori_{\sigma}(X)$ denote the abelian category of Nori motivic perverse sheaves over $X$ (as defined in \cite[\S~2.1]{IM24}).
	When $X = \Spec(k)$, we write $\MNori_{\sigma}(k)$ in place of $\MNori_{\sigma}(\Spec(k))$.
	\item If $X$ is smooth and geometrically connected over $k$, we let $\MNoriLoc_{\sigma}(X)$ denote the Tannakian category of Nori motivic local systems over $X$ (as defined in \cite[\S~6]{Ter24Nori}).
	For every point $x \in X^{\sigma}$, we write $\Gmot(X,x;\sigma)$ for the Tannaka dual of $\MNoriLoc_{\sigma}(X)$ with respect to the fibre functor at $x$.
	Similarly, given $M \in \MNoriLoc_{\sigma}(X)$, we write $\Gmot(M,x;\sigma)$ for the Tannaka dual of $\langle M \rangle^{\otimes}$ with respect to the same fibre functor.
\end{itemize}

%\subsection*{Mixed Hodge modules}
%\begin{itemize}
%	\item Let again $X$ be a $k$-variety.
%	\begin{itemize}
%		\item We write $\MHM_{\sigma}(X)$ for the abelian category of algebraic mixed Hodge modules over the complex variety $X \times_{k,\sigma} \C$ (as defined in \cite{Sai90}). 
%	\end{itemize}
%	\item Suppose that $X$ is smooth an geometrically connected over $k$.
%	\begin{itemize}
%		\item We let $\VHS_{\sigma}(X)$ denote the abelian subcategory of smooth mixed Hodge modules over $X$ --
%		that is, the admissible, graded-polarizable variations of mixed Hodge structure.
%		\item For every point $x \in X^{\sigma}$, we let $\GHodge(X^{\sigma},x)$ denote the Tannaka dual of $\VHS_{\sigma}(X)$ with respect to the fibre functor at $x$.
%		\item Similarly, for every $V \in \VHS_{\sigma}(X)$, we let $\GHodge(V,x)$ denote the Tannaka dual of $\langle V \rangle^{\otimes}$ with respect to the fibre functor at $x$.
%	\end{itemize}
%\end{itemize}

\section{Recollections on Nori motivic local systems}\label{sect:MNori}

In this section, we review Nori's theory of mixed motives and its extension to a theory of mixed motivic sheaves, with a focus on motivic local systems and motivic Galois groups.
The reader already familiar with the theory can safely skip this brief account.

Throughout, we work over a field $k$ of characteristic $0$ endowed with a complex embedding $\sigma \colon k \hookrightarrow \C$.
%In the last part, we discuss the Hodge realization of Nori motives.

\subsection{Nori motives}\label{subsect:MNori(k)}

We begin by reviewing Nori's theory of motives over $k$.
For convenience, we do not follow Nori's original approach (as discussed in \cite[\S~II]{HMS17}) but rather Ivorra--Morel's modern approach (as developed in \cite[\S\S~1-2]{IM24}).

The category $\MNori_{\sigma}(k)$ of \textit{Nori motives} over $k$ is defined as the so-called universal abelian factorization of the homological functor
\begin{equation*}
	\beta_{\sigma,k} \colon \DAct(k) \xrightarrow{\Bti_{\sigma,k}^*} D^b(\vect_{\Q}) \xrightarrow{H^0} \vect_{\Q}.
\end{equation*}
Here, $\DAct(k)$ is Voevodsky's triangulated category of constructible motives over $k$ (as defined in Ayoub's work \cite{Ayo07a,Ayo07b}) and $\Bti_{\sigma,k}^* \colon \DAct(k) \to D^b(\vect_{\Q})$ denotes the Betti realization associated to $\sigma$ (see \cite[Defn.~2.1]{Ayo10}). 
Intuitively, $\DAct(k)$ contains the universal cohomology complexes of all $k$-varieties, and the Betti realization converts them into the usual singular cohomology complexes.
By construction, Nori's abelian category comes equipped with a homological functor
\begin{equation*}
	\pi_{\sigma,k} \colon \DAct(k) \to \MNori_{\sigma}(k)
\end{equation*}
and with a faithful exact functor
\begin{equation*}
	\iota_{\sigma,k} \colon \MNori_{\sigma}(k) \to \vect_{\Q}
\end{equation*}
providing a universal factorization of $\beta_{\sigma,k}$ of the form
\begin{equation*}
	\beta_{\sigma,k} \colon \DAct(k) \xrightarrow{\pi_{\sigma,k}} \MNori_{\sigma}(k) \xrightarrow{\iota_{\sigma,k}} \vect_{\Q}.
\end{equation*}
%Namely, $\MNori_{\sigma}(k)$ is the initial abelian category providing such a factorization, in a precise technical sense (see \cite[\S 2.1]{IM24} for more details).
Intuitively, $\MNori_{\sigma}(k)$ contains the single universal cohomology groups of all $k$-varieties.

Given a field extension $k'/k$ and a complex embedding $\sigma' \colon k' \hookrightarrow \C$ extending $\sigma$, the Betti realization functors fit into a commutative diagram of the form
\begin{equation*}
	\begin{tikzcd}
		\DAct(k) \arrow{rr} \arrow{dr}{\Bti_{\sigma,k}^*} &&
		\DAct(k') \arrow{dl}{\Bti_{\sigma',k'}^*} \\
		&
		D^b(\vect_{\Q}),
	\end{tikzcd}
\end{equation*}
where the horizontal arrow denotes the base-change functor on Voevodsky motives (for example, see \cite[Lem.~1.6]{JT25}). 
This induces a canonical faithful exact base-change functor
\begin{equation*}
	\MNori_{\sigma}(k) \to \MNori_{\sigma'}(k')
\end{equation*}
rendering the diagram
\begin{equation}\label{dia:MNori_k'/k}
	\begin{tikzcd}
		\MNori_{\sigma}(k) \arrow{rr} \arrow{dr}{\iota_{\sigma,k}} &&
		\MNori_{\sigma'}(k') \arrow{dl}{\iota_{\sigma',k'}} \\
		&
		\vect_{\Q}
	\end{tikzcd}
\end{equation}
commutative up to natural isomorphism. 
This construction is compatible with composition of field extensions.
The following result will be useful later on:

\begin{lem}[{\cite[Prop.~6.9]{IM24}}]\label[lem]{lem:MNori_colim}
	Let $\left\{k_i\right\}_{i \in I}$ be a filtered family of subfields of $k$ such that $k = \bigcup_{i \in I} k_i$; 
	for every $i \in I$, set $\sigma_i := \sigma|_{k_i}$.
	Then, the induced functor
	\begin{equation*}
		\twocolim_{i \in I} \MNori_{\sigma_i}(k_i) \rightarrow \MNori_{\sigma}(k)
	\end{equation*}
	is an equivalence.
\end{lem}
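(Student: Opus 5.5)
To prove that $\twocolim_{i \in I} \MNori_{\sigma_i}(k_i) \to \MNori_{\sigma}(k)$ is an equivalence, I will use the description of Nori's category as a universal abelian factorization. Concretely, $\MNori_{\sigma}(k)$ can be identified with the category of finite-dimensional comodules over a coalgebra $C_k$. This coalgebra is built from the functor $\beta_{\sigma,k}$ on $\DAct(k)$, in Nori's style: it is the colimit over finite subdiagrams of the duals of endomorphism algebras of the fibre functor. The plan has three steps.

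\textbf{Step 1 (motives).} Show that the base-change functors induce an equivalence
\begin{equation*}
\twocolim_{i} \DAct(k_i) \simeq \DAct(k)
\end{equation*}
at the level of objects and morphisms. This is the continuity property of constructible Voevodsky motives for filtered limits of schemes $\Spec(k) = \lim_i \Spec(k_i)$, proved by Ayoub (and in the étale-motives literature). Every constructible motive over $k$ is built from motives of smooth $k$-varieties, and these descend to some $k_i$. Morphism groups are computed as colimits because the motives are compact.

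\textbf{Step 2 (compatibility with Betti realization).} By the commutative triangle preceding \eqref{dia:MNori_k'/k}, the Betti realizations $\Bti^*_{\sigma_i,k_i}$ are compatible with base change. Hence $\beta_{\sigma,k}$ is identified with the functor induced on the 2-colimit by the $\beta_{\sigma_i,k_i}$.

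\textbf{Step 3 (universal abelian factorization commutes with filtered colimits).} The coalgebra attached to a homological functor with values in $\vect_{\Q}$ is a filtered colimit over finite subdiagrams. Any finite subdiagram of $\DAct(k)$, meaning finitely many objects and morphisms, descends to some $\DAct(k_i)$ by Step 1. Therefore $C_k = \operatorname{colim}_i C_{k_i}$. Comodule categories turn filtered colimits of coalgebras into 2-colimits, because a finite-dimensional comodule and its morphisms only involve a finite-dimensional subcoalgebra. This gives the equivalence.

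The main obstacle is Step 1 together with the passage in Step 3. Finite subdiagrams must descend compatibly, including the commutativity relations among morphisms. That requires both essential surjectivity and full faithfulness of the colimit functor on $\DAct$, which rests on the continuity theorem for $\DAct$ rather than anything formal.

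An alternative route is to check directly that the functor is faithful (which follows from compatibility with $\iota$), full, and essentially surjective with image closed under subquotients. For this, I would use that every object of $\MNori_{\sigma}(k)$ is a subquotient of some $\pi_{\sigma,k}(A)$, and that $A$ descends to some $k_i$.
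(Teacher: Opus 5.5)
Your proposal is correct and follows the same route as the paper, which combines Ayoub's continuity equivalence $\twocolim_{i \in I} \DAct(k_i) \simeq \DAct(k)$ (\cite[Cor.~3.22]{Ayo14Et}) with the fact that universal abelian factorizations commute with filtered $2$-colimits (\cite[Lem.~5.12]{Ter24UAF}). The only difference is that your Step 3 proves the latter fact through Nori's coalgebra description, whereas the paper cites it.
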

\begin{proof}
	Since universal abelian factorizations are compatible with filtered $2$-colimits (see \cite[Lem.~5.12]{Ter24UAF}), this follows from the fact that the analogous functor of Voevodsky motives
	\begin{equation*}
		\twocolim_{i \in I} \DAct(k_i) \to \DAct(k)
	\end{equation*}
	is an equivalence (see \cite[Cor.~3.22]{Ayo14Et}).
\end{proof}

The most remarkable aspect of Nori's theory is the existence of a tensor product on $\MNori_{\sigma}(k)$ compatible with the usual tensor product of singular cohomology groups.
More precisely:
\begin{thm}[Nori, {\cite[Thm.~9.3.10]{HMS17}}]\label[thm]{thm:Nori_tens}
	The abelian category $\MNori_{\sigma}(k)$ carries a canonical unitary symmetric monoidal structure, which turns it into a neutral Tannakian category over $\Q$ with canonical fibre functor $\iota_{\sigma,k} \colon \MNori_{\sigma}(k) \to \vect_{\Q}$.
\end{thm}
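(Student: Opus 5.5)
The plan is to follow Nori's original strategy, transported to Ivorra--Morel's setting. Everything hinges on one geometric input: the good pairs, or cellular filtrations, which make the Künneth formula available at the level of the universal abelian factorization. The proof has three steps.

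\emph{Step 1: reduce to a coalgebra.} By the universal property, $\MNori_{\sigma}(k)$ is equivalent to the category of finite-dimensional comodules over the coalgebra
\begin{equation*}
A := \varinjlim_{F} \mathrm{End}(\beta_{\sigma,k}|_F)^{\vee},
\end{equation*}
where $F$ runs over finite sets of objects of $\DAct(k)$. Under this equivalence, $\iota_{\sigma,k}$ becomes the forgetful functor to $\vect_{\Q}$. Endowing $\MNori_{\sigma}(k)$ with a symmetric monoidal structure compatible with $\iota_{\sigma,k}$ then amounts to making $A$ a commutative bialgebra. Rigidity then makes $A$ a Hopf algebra, so that $\MNori_{\sigma}(k)$ becomes neutral Tannakian with fibre functor $\iota_{\sigma,k}$ by \cite{DM82}.

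\emph{Step 2: the multiplication on $A$.} The tensor product of $\DAct(k)$ is compatible with $\Bti^*_{\sigma,k}$, but it is not compatible with $H^0$, because of the Künneth formula. The fix is Nori's basic lemma, which gives a cofinal family of objects, namely motives of good pairs and cellular complexes, whose Betti realizations are concentrated in a single degree. For such objects $H^0$ is monoidal. On them one defines
\begin{equation*}
\mathrm{End}(\beta|_{F\otimes F'}) \to \mathrm{End}(\beta|_F)\otimes\mathrm{End}(\beta|_{F'}),
\end{equation*}
and dualizing yields the multiplication $A\otimes A \to A$. The unit comes from $\unit$, and commutativity and associativity are inherited from the symmetric monoidal structure on $\DAct(k)$. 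One must also check that the universal factorization restricted to this cofinal family recovers the full category; this follows from the description of universal abelian factorizations by generators.

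\emph{Step 3: rigidity.} It suffices to exhibit duals for a generating set of objects, since duals then propagate through subquotients in a category of comodules that is already known to be abelian and monoidal. For this, use the Tate twist $\Q(1)$, which is invertible, together with Poincaré duality for smooth projective varieties inside $\DAct(k)$. Together these show that the images of the generators are dualizable, with evaluation and coevaluation maps realized motivically. Hence $A$ has an antipode.

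The main obstacle is Step 2: the monoidal structure on $\DAct(k)$ does not descend through the homological functor $\pi_{\sigma,k}$ directly. The Künneth argument must be carried out on the cellular subcategory, and one must then verify independence of choices and the coherence of the constraints. This is where the basic lemma is essential, and it is the step I would make rigorous first, before addressing rigidity.
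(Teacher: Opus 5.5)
Your three steps follow the architecture of Nori's proof in \cite[Ch.~9]{HMS17}, which the paper cites without giving a proof of its own. You transplant it to the $\DAct(k)$-model, where Tate twists are already invertible, so no localization at the Lefschetz motive is needed. However, the step you postpone is the heart of the theorem, and the reason you give for it does not suffice. Let $A_0 \to A$ be the coalgebra map induced by restricting $\beta_{\sigma,k}$ to your family. Knowing that every object of $\MNori_{\sigma}(k)$ is a subquotient of one coming from the family only gives \emph{surjectivity} of $A_0 \to A$. The real issue is \emph{injectivity}. Morphisms of $\DAct(k)$ passing through objects outside the family can further constrain the endomorphisms of $\beta_{\sigma,k}|_F$ for finite $F$ inside the family; an example is kernels of maps $\pi_{\sigma,k}(C) \to \pi_{\sigma,k}(D)$ with $C$ in the family and $D$ not. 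Such constraints would create subobjects and morphisms in $\MNori_{\sigma}(k)$ that the restricted factorization does not see. To rule this out, you must lift all of $\beta_{\sigma,k}$, objects and morphisms, to a homological functor into the restricted factorization, and then invoke the universal property. This needs the basic lemma in a form compatible with finitely many morphisms, so that cellular complexes can be chosen functorially up to refinement. That is exactly what the comparison of good pairs with arbitrary pairs in \cite[Ch.~9]{HMS17} accomplishes, and it is not a formality about generators.

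Step~3 also has to be rerouted, because Poincar\'e duality for smooth projective varieties does not by itself give duals of your generators. Your generators are (shifted) motives of good pairs, not of smooth projective varieties. The latter could not serve as generators anyway: they are pure, so the category they generate misses non-split mixed extensions such as Kummer motives. Moreover, $\pi_{\sigma,k}$ is not monoidal on objects whose Betti realization is spread over several degrees, so duality for such motives in $\DAct(k)$ does not pass to $\MNori_{\sigma}(k)$. What your two ingredients actually give is rigidity of all of $\DAct(k)$. It is the thick subcategory generated by Tate twists of motives of smooth projective varieties (by resolution of singularities), and dualizable objects form a thick subcategory. To transport this rigidity, the family of Step~2 must be stable under duals. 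This is unclear for motives of good pairs, but automatic for the full subcategory $(\Bti^*_{\sigma,k})^{-1}(\vect_{\Q})$, since $\Bti^*_{\sigma,k}$ is monoidal and hence preserves duals. That subcategory is also closed under tensor products, and it is the kind of subcategory the paper itself uses, following \cite[\S~2]{Ter24Nori}, when it needs to control monoidal structures. If you build Step~2 on it, evaluation and coevaluation maps pass through $\pi_{\sigma,k}$, so every generator becomes dualizable. Working in degree $0$ also removes the Koszul signs $(-1)^{ij}$, which would otherwise contradict your claim that commutativity is simply inherited from $\DAct(k)$. With this repair, your rigidity step is genuinely simpler than the one in \cite{HMS17}. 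There, duals have to be constructed inside Nori's diagram of pairs, with no ambient rigid category to borrow them from.
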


\begin{nota}
	We let $\Gmot(k;\sigma)$ denote the Tannaka dual of $\MNori_{\sigma}(k)$ with respect to the fibre functor $\iota_{\sigma,k}$, and we call it the \textit{motivic Galois group} of $k$.
	Similarly, for every $N \in \MNori_{\sigma}(k)$, we let $\Gmot(N;\sigma)$ denote the Tannaka dual of the Tannakian subcategory $\langle N \rangle^{\otimes} \subset \MNori_{\sigma}(k)$ with respect to the same fibre functor, and we call it the \textit{motivic Galois group} of $N$.
\end{nota}

%Since the full motivic Galois group is quite inaccessible, it is natural to study it through its finite-dimensional quotients.
Under Tannaka duality, the inclusion $\langle N \rangle^{\otimes} \subset \MNori_{\sigma}(k)$ corresponds to a surjection
\begin{equation*}
	\Gmot(k;\sigma) \twoheadrightarrow \Gmot(N;\sigma).
\end{equation*}
Every finite-dimensional quotient of the pro-algebraic group $\Gmot(k;\sigma)$ is of the form $\Gmot(N;\sigma)$ for some $N \in \MNori_{\sigma}(k)$.

\begin{ex}
	The \textit{unit motive} over $k$ is defined as 
	\begin{equation*}
		\Q_k := \pi_{\sigma,k}(\unit_k) \in \MNori_{\sigma}(k),
	\end{equation*}
	where $\unit_k$ denotes the monoidal unit of $\DAct(k)$.
	As usual, the fibre functor induces an equivalence $\iota_{\sigma,k} \colon \langle \Q_k \rangle^{\otimes} \xrightarrow{\sim} \vect_{\Q}$.
	We call an object $N \in \MNori_{\sigma}(k)$ \textit{trivial} if $N \in \langle \Q_k \rangle^{\otimes}$ -- or, equivalently, if $\Gmot(N;\sigma) = 1$.
	\qed
\end{ex}

The easiest examples of non-trivial Nori motives arise from finite extensions of the base field $k$.
\begin{ex}[{\cite[\S~9.4]{HMS17}}; see also {\cite[Ex.~2.8]{JT25}}]\label[ex]{ex:Artin_k}
	Let $k'/k$ be a finite field extension, and let $e \colon \Spec(k') \to \Spec(k)$ denote the corresponding finite étale morphism.
	The associated \textit{Artin motive} is defined as
	\begin{equation*}
		\Q_{k'/k} := \pi_{\sigma,k}(e_* \unit_{k'}) \in \MNori_{\sigma}(k).
	\end{equation*}
	If the extension $k'/k$ is Galois, the motivic Galois group $\Gmot(\Q_{k'/k};\sigma)$ is canonically isomorphic to the Galois group $\Gal(k'/k)$, regarded as a finite constant algebraic group over $\Q$.
	Given a further finite extension $k''/k'$ such that the composite extension $k''/k$ is again Galois, we have the inclusion $\langle \Q_{k'/k} \rangle^{\otimes} \subset \langle \Q_{k''/k} \rangle^{\otimes}$ inside $\MNori_{\sigma}(k)$.
	Under Tannaka duality, it corresponds to the usual quotient homomorphism 
	\begin{equation*}
		\Gal(k''/k) \twoheadrightarrow \Gal(k'/k),
	\end{equation*}
	regarded as a morphism of algebraic groups over $\Q$.
	\qed
\end{ex}

\begin{nota}
	We let $\MNori_{\sigma}^{\Artin}(k)$ denote the category of \textit{Artin motives} over $k$, defined as the union of the subcategories $\langle \Q_{k'/k} \rangle^{\otimes} \subset \MNori_{\sigma}(k)$ as $k'$ varies among all finite extensions of $k$ inside a fixed algebraic closure $\overline{k}$:
	it is again a Tannakian subcategory of $\MNori_{\sigma}(k)$.
\end{nota}

Recall that, under Tannaka duality, filtered unions of Tannakian subcategories inside an ambient Tannakian category correspond to inverse limits of Tannaka-dual groups.
In the case of Artin motives, this means that the Tannaka dual of $\MNori_{\sigma}^{\Artin}(k)$ is canonically isomorphic to the absolute Galois group $\Gal(\overline{k}/k)$, regarded as a pro-finite algebraic group over $\Q$.
Thus, the inclusion $\MNori_{\sigma}^{\Artin}(k) \subset \MNori_{\sigma}(k)$ determines a canonical surjection
\begin{equation}\label{eq:Gmot-to-Gal}
	\Gmot(k;\sigma) \twoheadrightarrow \Gal(\overline{k}/k).
\end{equation}
Given a field extension $k'/k$ and a complex embedding $\sigma' \colon k' \hookrightarrow \C$ extending $\sigma$, the base-change functor $\MNori_{\sigma}(k) \to \MNori_{\sigma'}(k')$ is canonically monoidal (see \cite[\S~9.5]{HMS17}), and so is the natural isomorphism filling the diagram \eqref{dia:MNori_k'/k}.
This determines a homomorphism of Tannaka-dual groups
\begin{equation*}
	\Gmot(k';\sigma') \to \Gmot(k;\sigma),
\end{equation*}
which in general is neither surjective nor a closed immersion.

The situation is easiest to understand in the case of an algebraic extension.
Until the end of the present subsection, fix an algebraic closure $\overline{k}$ of $k$ as well as a complex embedding $\overline{\sigma} \colon \overline{k} \hookrightarrow \C$ extending $\sigma$.
The base-change functor $\MNori_{\sigma}(k) \to \MNori_{\overline{\sigma}}(\overline{k})$ corresponds to a homomorphism
\begin{equation}\label{eq:Gmotbar-to-Gmot}
	\Gmot(\overline{k};\overline{\sigma}) \to \Gmot(k;\sigma),
\end{equation}
and we have the following fundamental relation:
\begin{prop}[{\cite[Thm.~9.1.16]{HMS17}};
	see also {\cite[Prop.~2.14]{JT25}}]\label[prop]{prop:ses_Gmot-Gmot-Gal}
    The sequence of pro-algebraic groups
	\begin{equation*}
		1 \to \Gmot(\overline{k};\overline{\sigma}) \to \Gmot(k;\sigma) \to \Gal(\overline{k}/k) \to 1
	\end{equation*}
	defined by the homomorphisms \eqref{eq:Gmotbar-to-Gmot} and \eqref{eq:Gmot-to-Gal} is exact.
\end{prop}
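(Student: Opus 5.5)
We plan to use the standard Tannakian exactness criterion of Esnault--Hai--Sun (equivalently \cite[Prop.~2.21]{DM82} together with its refinement for normality). The sequence consists of the Tannaka duals of the functors
\begin{equation*}
	\MNori_{\sigma}^{\Artin}(k) \hookrightarrow \MNori_{\sigma}(k) \xrightarrow{\ \beta\ } \MNori_{\overline{\sigma}}(\overline{k}),
\end{equation*}
where $\beta$ denotes base change. We will verify the following three properties. First, surjectivity of $\Gmot(k;\sigma) \to \Gal(\overline{k}/k)$: this is automatic, because $\MNori_{\sigma}^{\Artin}(k)$ is a full subcategory stable under subquotients. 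Second, the composite $\Gmot(\overline{k};\overline{\sigma}) \to \Gal(\overline{k}/k)$ is trivial: for a finite extension $k'/k$ one has $\beta(\Q_{k'/k}) = \pi_{\overline{\sigma},\overline{k}}(e_*\unit)$, where $e$ is the base change of $\Spec(k') \to \Spec(k)$ to $\overline{k}$, i.e.\ a disjoint union of $[k':k]$ copies of $\Spec(\overline{k})$. Hence $\beta(\Q_{k'/k}) \cong \Q_{\overline{k}}^{[k':k]}$ is trivial.

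Third, and this is the substantive part, we must show that the map $\Gmot(\overline{k};\overline{\sigma}) \to \Gmot(k;\sigma)$ is a closed immersion and that its image is exactly the kernel. We split this into two statements.
\begin{enumerate}
	\item[(a)] Every object of $\MNori_{\overline{\sigma}}(\overline{k})$ is a subquotient of $\beta(N)$ for some $N$. By \Cref{lem:MNori_colim}, every motive over $\overline{k}$ descends to a finite extension $k'$. Writing $f\colon \Spec(k') \to \Spec(k)$, Weil restriction $f_*$ gives $N := f_*N'$ with $\beta(N) = \bigoplus_{\tau} \tau^* N'$, which contains the base change of $N'$ as a summand. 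One needs $f_*$ on Nori motives induced from $\DAct$; this follows from the universal property together with the exactness of $f_*$ for finite étale $f$.
	\item[(b)] For $N \in \MNori_{\sigma}(k)$, the largest trivial subobject of $\beta(N)$ comes from a subobject of $N$ lying in $\MNori_{\sigma}^{\Artin}(k)$ after base change. Equivalently, if $\beta(N)$ is trivial, then $N$ is Artin.
\end{enumerate}

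For (b), the trivial part descends to some finite Galois $k'$. There $\beta'(N) \supset W'$ is a trivial subobject, i.e.\ a quotient of a sum of copies of the unit. We then apply the adjunction $f^* \dashv f_*$: a morphism $\Q_{k'}^r \to f^*N$ corresponds to a morphism $f_*\Q_{k'}^r \to N$, whose image is an Artin subobject $A \subset N$ with $\beta(A) \supseteq W$. This reduces (b) to two inputs. The first is that morphisms over $\overline{k}$ between descended objects come from some finite level, again by \Cref{lem:MNori_colim}. The second is the existence of the adjunction $f^*\dashv f_*$ on Nori motives for finite étale $f$, which is the point we expect to be most delicate. We would obtain it from the adjunction on $\DAct$ restricted to the hearts, since both functors are $t$-exact for the perverse / Nori structure.

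Finally, normality of the image, as required by the criterion, follows from the fact that the kernel is Tannaka dual to the quotient of $\MNori_{\sigma}(k)$ by Artin motives. This is exactly what (a) and (b) identify with $\MNori_{\overline{\sigma}}(\overline{k})$.
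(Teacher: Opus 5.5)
The paper gives no argument of its own for this statement; it only cites \cite[Thm.~9.1.16]{HMS17} and \cite[Prop.~2.14]{JT25}. So your proposal has to stand on its own. Your route is an exactness criterion (Esnault--Hai--Sun) combined with restriction of scalars along finite extensions, and it is correct in substance. Two steps need repair, and in both cases you already have the necessary input.

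\textbf{First, the adjunction.} The bijection you actually use, $\mathrm{Hom}(\Q_{k'}^r, f^*N) \cong \mathrm{Hom}(f_*\Q_{k'}^r, N)$, is $f_* \dashv f^*$. That is $f_! \dashv f^!$, with $f_! \cong f_*$ because $f$ is finite and $f^! \cong f^*$ because $f$ is étale. It is not $f^* \dashv f_*$; for finite étale $f$ both adjunctions hold, so this is only a matter of naming.

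More importantly, the adjunction cannot be obtained by ``restricting the adjunction on $\DAct$ to hearts''. $\MNori_\sigma(k)$ is a universal abelian factorization, not the heart of any $t$-structure on $\DAct(k)$. The correct source is the six-functor formalism on $D^b(\MNori_\sigma(-))$ of \Cref{thm:Nori-6ff}. For finite étale $f$, both $f_*$ and $f^*$ are $t$-exact, so the adjunctions restrict to $\MNori_\sigma(\Spec(k'))$ and $\MNori_\sigma(k)$. Then \Cref{rem:MNori_sigma(k')=MNori_sigma'(k')} identifies $f^*$ with base change to $k'$.

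With this in place, your descent of the maximal trivial subobject goes through. The inclusion $\Q_{k'}^r \hookrightarrow f^*N$ factors as $f^*\phi$ composed with the unit. Hence $W' \subset f^*A$, where $A := \mathrm{im}(\phi)$ is Artin. Maximality of $W$ then forces $\beta(A) = W$.

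\textbf{Second, normality.} Your final paragraph is circular: saying the kernel is dual to ``the quotient of $\MNori_\sigma(k)$ by Artin motives'' is the statement being proved. The Esnault--Hai--Sun criterion has no separate normality check. Its third condition is that every object of $\MNori_{\overline{\sigma}}(\overline{k})$ be a \emph{quotient} (equivalently, a subobject) of some $\beta(N)$.

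This cannot be weakened to ``subquotient''. Take a Borel subgroup $B \subset \mathrm{SL}_2$ with trivial third group. Since $V^B = V^{\mathrm{SL}_2}$ for every $\mathrm{SL}_2$-representation $V$, it satisfies the closed-immersion condition and both of your conditions on trivial objects. Yet $1 \to B \to \mathrm{SL}_2 \to 1$ is not exact.

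You already have what is needed. Your step (a) exhibits every object over $\overline{k}$ as a direct summand of some $\beta(f_*N')$, which is in particular a quotient. Invoke that in place of the last paragraph.

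For comparison, a route closer to the tools used elsewhere in the paper would be Galois descent at each finite level, via \Cref{lem:MNori(X)=MNori(X')^Gal}, followed by passing to the limit using \Cref{lem:MNori_colim}. That packages all three conditions into the equivalence with equivariant objects, but requires a Tannakian analysis of equivariantization. Your route needs only the finite étale adjunctions, proper base change and the direct-summand splitting, at the cost of checking the criterion's conditions by hand.
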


For our purposes, it is convenient to work with a finite-dimensional variant of this result.

\begin{nota}
	For every $N \in \MNori_{\sigma}(k)$, we let $\langle N \rangle^{\otimes}_{\Artin}$ denote the intersection $\langle N \rangle^{\otimes} \cap \MNori_{\sigma}^{\Artin}(k)$ inside $\MNori_{\sigma}(k)$:
	it is again a Tannakian subcategory of $\MNori_{\sigma}(k)$.
	Moreover, we let $\Gmot^{\Artin}(N;\sigma)$ denote the Tannaka dual of $\langle N \rangle^{\otimes}_{\Artin}$ with respect to the fibre functor $\iota_{\sigma,k}$.
\end{nota}

\begin{rem}\label[rem]{rem:Gmot^Artin-finite}
	 By construction, $\Gmot^{\Artin}(N;\sigma)$ is the maximal quotient of $\Gmot(k;\sigma)$ which factors through both $\Gmot(N;\sigma)$ and $\Gal(\overline{k}/k)$.
	 Being an algebraic quotient of the pro-finite group $\Gal(\overline{k}/k)$, it is a finite group, equal to $\Gal(k'/k)$ for a unique finite Galois extension $k'/k$.
\end{rem}

\begin{cor}\label[cor]{cor:ses_Gmot-Artin}
	Fix an object $N \in \MNori_{\sigma}(k)$, and let $\overline{N}$ denote its image in $\MNori_{\overline{\sigma}}(\overline{k})$.
	Then, the sequence of algebraic groups
	\begin{equation*}
		1 \to \Gmot(\overline{N};\overline{\sigma}) \to \Gmot(N;\sigma) \to \Gmot^{\Artin}(N;\sigma) \to 1
	\end{equation*}
	is exact.
\end{cor}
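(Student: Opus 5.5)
We deduce the corollary from \Cref{prop:ses_Gmot-Gmot-Gal} by passing to the finite-dimensional quotients attached to $N$. Consider the commutative diagram of pro-algebraic groups
\begin{equation*}
	\begin{tikzcd}
		1 \arrow{r} & \Gmot(\overline{k};\overline{\sigma}) \arrow{r} \arrow[two heads]{d} & \Gmot(k;\sigma) \arrow{r} \arrow[two heads]{d} & \Gal(\overline{k}/k) \arrow{r} \arrow[two heads]{d} & 1 \\
		1 \arrow{r} & \Gmot(\overline{N};\overline{\sigma}) \arrow{r} & \Gmot(N;\sigma) \arrow{r} & \Gmot^{\Artin}(N;\sigma) \arrow{r} & 1,
	\end{tikzcd}
\end{equation*}
whose top row is exact by \Cref{prop:ses_Gmot-Gmot-Gal}. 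The vertical arrows are surjective because they are dual to inclusions of Tannakian subcategories (for the right-hand one, see \Cref{rem:Gmot^Artin-finite}). The bottom row makes sense for two reasons. First, the base-change functor sends $\langle N \rangle^{\otimes}$ into $\langle \overline{N} \rangle^{\otimes}$, which gives the first map. Second, $\langle N \rangle^{\otimes}_{\Artin} \subset \langle N \rangle^{\otimes}$, which gives the second map. The composite is trivial because Artin motives over $k$ become trivial over $\overline{k}$: the base change of $\Q_{k'/k}$ is $\Q_{\overline{k}}^{\oplus [k':k]}$. We then check exactness at each spot.

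\emph{Surjectivity on the right.} The map $\Gmot(N;\sigma) \to \Gmot^{\Artin}(N;\sigma)$ is dual to the inclusion $\langle N\rangle^\otimes_{\Artin} \subset \langle N\rangle^\otimes$ of a full subcategory stable under subquotients, so it is faithfully flat.

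\emph{Injectivity on the left.} By the Tannakian criterion \cite[Prop.~2.21(b)]{DM82}, it suffices to show that every object of $\langle \overline{N} \rangle^{\otimes}$ is a subquotient of the base change of an object of $\langle N \rangle^{\otimes}$. Every object of $\langle \overline{N}\rangle^\otimes$ is a subquotient of some $T(\overline{N})$, where $T$ is a tensor construction (sums of tensor products of $\overline N$ and its dual), and $T(\overline{N}) = \overline{T(N)}$. This gives the claim.

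\emph{Exactness in the middle.} This is the main point. Let $K := \ker(\Gmot(N;\sigma) \to \Gmot^{\Artin}(N;\sigma))$; it contains the image of $\Gmot(\overline{N};\overline{\sigma})$. Work with the Tannakian characterization of exactness \cite[Thm.~A.1]{EHS} (Esnault--Hai--Sun), or argue group-theoretically as follows. The map $\Gmot(k;\sigma) \to \Gmot(N;\sigma)\times_{\Gmot^{\Artin}(N;\sigma)}\Gal(\overline{k}/k)$ is surjective, because its image corresponds to the Tannakian subcategory generated by $\langle N\rangle^\otimes$ and Artin motives, and the fibre product is precisely its dual. Checking that duality uses the definition of $\Gmot^{\Artin}(N;\sigma)$ as the maximal common quotient, as in \Cref{rem:Gmot^Artin-finite}; we expect this step to need the most care.

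Granting this surjectivity, any $g \in K$ lifts to some $\tilde g \in \Gmot(k;\sigma)$ mapping to $(g,1)$ in the fibre product. Then $\tilde g$ has trivial image in $\Gal(\overline{k}/k)$, so by \Cref{prop:ses_Gmot-Gmot-Gal} it lies in $\Gmot(\overline{k};\overline{\sigma})$. Its image in $\Gmot(N;\sigma)$ therefore factors through $\Gmot(\overline{N};\overline{\sigma})$ by commutativity of the left square. Hence $K$ equals the image of $\Gmot(\overline{N};\overline{\sigma})$.

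Since all these groups are algebraic, the surjectivity statements above may be checked on $\overline{\Q}$-points, or more carefully via faithful flatness of the corresponding quotient maps.
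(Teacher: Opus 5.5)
Your proposal is correct and follows the paper's proof: you use the same comparison diagram over the exact sequence of \Cref{prop:ses_Gmot-Gmot-Gal}, and you get injectivity on the left from \cite[Prop.~2.21(b)]{DM82}. Your fibre-product lifting step just spells out what the paper calls ``formal''. Middle exactness relies on $\Gmot^{\Artin}(N;\sigma)$ being the maximal common quotient of $\Gmot(N;\sigma)$ and $\Gal(\overline{k}/k)$ (\Cref{rem:Gmot^Artin-finite}), i.e.\ the quotient of $\Gmot(k;\sigma)$ by the product of the two kernels, so the surjectivity onto the fibre product that you grant is Goursat's lemma.
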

\begin{proof}
	Right-exactness follows formally from the commutative diagram with surjective vertical arrows
	\begin{equation*}
		\begin{tikzcd}
			\Gmot(\overline{k};\overline{\sigma}) \arrow{r} \arrow[two heads]{d} &
			\Gmot(k;\sigma) \arrow{r} \arrow[two heads]{d} &
			\Gal(\overline{k}/k) \arrow{r} \arrow[two heads]{d} &
			1 \\
			\Gmot(\overline{N};\overline{\sigma}) \arrow{r} &
			\Gmot(N;\sigma) \arrow{r} &
			\Gmot^{\Artin}(N;\sigma) \arrow{r} & 
			1,
		\end{tikzcd}
	\end{equation*}
	in which the upper row is exact (by \Cref{prop:ses_Gmot-Gmot-Gal}).
	To conclude, it remains to show that the arrow $\Gmot(\overline{N};\overline{\sigma}) \to \Gmot(N;\sigma)$ is a closed immersion.
	This is equivalent to saying that every object of $\langle \overline{N} \rangle^{\otimes}$ is a subquotient of one in the essential image of $\langle N \rangle^{\otimes}$ (by \cite[Prop.~2.21(b)]{DM82}), which is clear.
\end{proof}

\begin{rem}\label[rem]{rem:Gmot(kbar)=connected}
	The pro-algebraic group $\Gmot(\overline{k};\overline{\sigma})$ is conjectured to be connected (see \cite[Rmk.~9.5.7]{HMS17}).
	If this is the case, the same holds for each algebraic quotient $\Gmot(\overline{N};\overline{\sigma})$.
	In particular, the finite group $\Gmot^{\Artin}(N;\sigma)$ is expected to coincide with the group of connected components of $\Gmot(N;\sigma)$.
\end{rem}

\begin{cor}\label[cor]{cor:N'_to_Nbar_fullyfaith}
	Let $N$ and $\overline{N}$ be as in \Cref{cor:ses_Gmot-Artin}.
	Moreover, let $k'/k$ denote the finite Galois extension with Galois group $\Gmot^{\Artin}(N;\sigma)$ (as in \Cref{rem:Gmot^Artin-finite}); 
	write $\sigma' := \overline{\sigma}|_{k'}$, and let $N'$ denote the image of $N$ in $\MNori_{\sigma'}(k')$.
	Then, the base-change functor
	\begin{equation*}
		\langle N' \rangle^{\otimes} \to \langle \overline{N} \rangle^{\otimes}
	\end{equation*}
	is an equivalence. 
\end{cor}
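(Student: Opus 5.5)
The plan is to reduce the statement, via Tannaka duality, to showing that the induced homomorphism of Tannaka-dual groups
\begin{equation*}
	\Gmot(\overline{N};\overline{\sigma}) \to \Gmot(N';\sigma')
\end{equation*}
is an isomorphism. The functor is compatible with the fibre functors $\iota_{\sigma',k'}$ and $\iota_{\overline{\sigma},\overline{k}}$ through the diagram \eqref{dia:MNori_k'/k}, so we can work at the level of groups. Essential surjectivity up to subquotients gives a closed immersion, exactly as at the end of the proof of \Cref{cor:ses_Gmot-Artin}: every object of $\langle \overline{N} \rangle^{\otimes}$ is a subquotient of the image of an object of $\langle N' \rangle^{\otimes}$, by \cite[Prop.~2.21(b)]{DM82}. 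What remains is surjectivity. Once surjectivity holds, the functor is fully faithful with essential image stable under subquotients, and a closed immersion which is also surjective makes it an equivalence.

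For surjectivity, I would apply \Cref{cor:ses_Gmot-Artin} twice.
\begin{itemize}
	\item Over $k$, it gives the exact sequence $1 \to \Gmot(\overline{N}) \to \Gmot(N) \to \Gal(k'/k) \to 1$.
	\item Over $k'$, with the same algebraic closure $\overline{k}$ and embedding $\overline{\sigma}$, it gives $1 \to \Gmot(\overline{N}) \to \Gmot(N') \to \Gmot^{\Artin}(N';\sigma') \to 1$, since $\overline{N}$ is also the image of $N'$.
\end{itemize}
It therefore suffices to prove that $\Gmot^{\Artin}(N';\sigma')$ is trivial, i.e.\ that every Artin motive over $k'$ lying in $\langle N' \rangle^{\otimes}$ is trivial.

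To show this, I would use the natural map $\Gmot(N') \to \Gmot(N)$ induced by base change $\langle N \rangle^{\otimes} \to \langle N' \rangle^{\otimes}$. This map is a closed immersion by the same subquotient argument. It sits in a map of short exact sequences that is the identity on $\Gmot(\overline{N})$, and on the Artin quotients it is the map $\Gmot^{\Artin}(N') \to \Gal(k'/k)$ coming from $\Gal(\overline{k}/k') \to \Gal(\overline{k}/k)$. The image of $\Gal(\overline{k}/k')$ in $\Gal(k'/k)$ is trivial. Since the map is injective and the identity on the kernels, it is injective on the Artin quotients. Combining the two, $\Gmot^{\Artin}(N')$ injects into a group through which the trivial image factors, hence is trivial.

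The main obstacle is justifying the compatibility of these diagrams. Concretely, one must check that $\Gmot(N') \to \Gmot(N)$ restricted to $\Gmot(\overline{N})$ is the identity, and that the induced map on Artin quotients factors through the restriction $\Gal(\overline{k}/k') \to \Gal(\overline{k}/k) \to \Gal(k'/k)$. Both follow from functoriality of base change under the composite $k \subset k' \subset \overline{k}$ and from \Cref{prop:ses_Gmot-Gmot-Gal} applied to $k$ and to $k'$.
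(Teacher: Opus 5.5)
Your proof is correct and is essentially the paper's. Both arguments rest on the closed immersion $\Gmot(N';\sigma') \hookrightarrow \Gmot(N;\sigma)$ and on the fact that its composite to $\Gal(k'/k)$ is trivial, which together force $\Gmot(N';\sigma')$ into the kernel $\Gmot(\overline{N};\overline{\sigma})$. The paper states this directly as exactness of $1 \to \Gmot(N';\sigma') \to \Gmot(N;\sigma) \to \Gmot^{\Artin}(N;\sigma) \to 1$, so your second application of \Cref{cor:ses_Gmot-Artin} over $k'$ and the detour through $\Gmot^{\Artin}(N';\sigma')$ are harmless but unnecessary.
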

\begin{proof}
	The thesis is equivalent to the assertion that the closed immersion
	\begin{equation*}
		\Gmot(\overline{N};\overline{\sigma}) \hookrightarrow \Gmot(N';\sigma')
	\end{equation*}
	is in fact an isomorphism.
	To this end, in view of \Cref{cor:ses_Gmot-Artin}, it suffices to show that the sequence
	\begin{equation*}
		1 \to \Gmot(N';\sigma') \to \Gmot(N;\sigma) \to \Gmot^{\Artin}(N;\sigma) \to 1
	\end{equation*}
	is exact. 
	But injectivity of the arrow $\Gmot(N';\sigma') \to \Gmot(N;\sigma)$ follows as in the proof of \Cref{cor:ses_Gmot-Artin}, while exactness in the middle follows from the very definition of the extension $k'/k$.
\end{proof}

\begin{rem}
	There exists an alternative approach to motivic Galois groups, directly within the theory of Voevodsky motives:
	it has been developed by Ayoub in \cite{Ayo14H1,Ayo14H2} and compared to Nori's approach in Choudhury--Gallauer's paper \cite{CG17}.
	However, we do not need to adopt this alternative perspective in the present paper.
\end{rem}

\subsection{Nori motivic sheaves}\label{subsect:MNori(X)}

The definition of Nori motives as a universal abelian factorization can be generalized over arbitrary $k$-varieties:
the abelian category $\MNori_{\sigma}(X)$ of \textit{Nori motivic perverse sheaves} over $X$ is defined in \cite[\S~2.1]{IM24} as the universal abelian factorization of the homological functor
\begin{equation*}
	\beta_{\sigma,X} \colon \DAct(X) \xrightarrow{\Bti_{\sigma,X}^*} D^b_{\sigma}(X) \xrightarrow{{^p H^0}} \Perv_{\sigma}(X),
\end{equation*}
where $\Bti_{\sigma,X}^* \colon \DAct(X) \to D^b_{\sigma}(X)$ denotes the Betti realization of Voevodsky's constructible motivic sheaves over $X$ (see \cite[Defn.~2.1]{Ayo10}).
In the case when $X = \Spec(k)$, one obtains the category of Nori motives introduced in the previous subsection.

Several basic properties of Nori motives hold for motivic perverse sheaves as well.
For instance, we have the following generalization of \Cref{lem:MNori_colim}:
\begin{lem}\label[lem]{lem:MNori(X)_colim}
	Keep the notation and assumptions of \Cref{lem:MNori_colim}.
	Given a $k$-variety $X$, choose an index $i_0 \in I$ such that $X$ admits a model $X_0$ over $k_{i_0}$;
	for every $i \in I_{i_0/}$, set $X_i := X_0 \times_{k_{i_0}} k_i$.
	Then, the induced functor
	\begin{equation*}
		\twocolim_{i \in I} \MNori_{\sigma_i}(X_i) \rightarrow \MNori_{\sigma}(X)
	\end{equation*}
	is an equivalence.
\end{lem}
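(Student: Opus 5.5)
The plan is to mimic the proof of \Cref{lem:MNori_colim}: deduce the statement from the two facts it relied on, now applied over the base varieties $X_i$ rather than the base fields $k_i$. Recall that $\MNori_{\sigma}(X)$ is by definition the universal abelian factorization of the homological functor $\beta_{\sigma,X} = {^p H^0} \circ \Bti_{\sigma,X}^*$. By \cite[Lem.~5.12]{Ter24UAF}, universal abelian factorizations commute with filtered $2$-colimits. It therefore suffices to show that the family of homological functors $\beta_{\sigma_i,X_i}$, for $i \in I_{i_0/}$, exhibits $\beta_{\sigma,X}$ as a filtered $2$-colimit. This amounts to two things: a $2$-colimit statement on the source side, and compatibility on the target side.

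\textbf{Target side.} For every $i$, the complex-analytic spaces $X_i^{\sigma_i}$ and $X^{\sigma}$ are canonically identified, since $X_i \times_{k_i,\sigma_i} \C \cong X \times_{k,\sigma} \C$. Under this identification, constructible complexes and perverse sheaves on $X_i^{\sigma_i}$ are just certain constructible complexes on the common space, and base change to $k$ (or to $k_j$) is the natural inclusion. The constructibility condition relative to $k_i$-stratifications is weaker than relative to $k$-stratifications, so the target categories form a filtered system $\Perv_{\sigma_i}(X_i) \subset \Perv_{\sigma}(X)$. Moreover, ${^p H^0}$ commutes with these inclusions, because the perverse $t$-structure does not depend on the field of definition of the stratification. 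Hence the Betti realizations assemble into a $2$-commutative diagram
\begin{equation*}
	\begin{tikzcd}
		\DAct(X_i) \arrow{r} \arrow{d}{\beta_{\sigma_i,X_i}} & \DAct(X) \arrow{d}{\beta_{\sigma,X}} \\
		\Perv_{\sigma_i}(X_i) \arrow{r} & \Perv_{\sigma}(X),
	\end{tikzcd}
\end{equation*}
using compatibility of Betti realization with pullback along $X \to X_i$ \cite{Ayo10}, exactly as in the diagram preceding \eqref{dia:MNori_k'/k}.

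\textbf{Source side.} Here I need the relative analogue of \cite[Cor.~3.22]{Ayo14Et}, namely that the functor
\begin{equation*}
	\twocolim_{i} \DAct(X_i) \to \DAct(X)
\end{equation*}
is an equivalence. This is the continuity property of constructible étale motives with respect to limits of schemes with affine transition maps. Ayoub establishes it in this generality in the cited reference, for the pro-system $X = \lim_i X_i$, and I would invoke that version. Given the equivalence, the universal abelian factorization of the colimit functor is the $2$-colimit of the factorizations. The faithful exact functors to the targets are compatible by the diagram above. Note that the universal factorization only depends on the source and on the homological functor up to faithful exact extension of the target, so the difference between the targets $\Perv_{\sigma_i}(X_i)$ and $\Perv_{\sigma}(X)$ is harmless.

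\textbf{Main obstacle.} The delicate point is this last compatibility. The $2$-colimit of the $\Perv_{\sigma_i}(X_i)$ need not be all of $\Perv_{\sigma}(X)$, so I must check that \cite[Lem.~5.12]{Ter24UAF} applies when the target also varies in a filtered system mapping faithfully and exactly into $\Perv_{\sigma}(X)$. I would handle this by noting that the universal abelian factorization of $\beta_{\sigma,X}$ coincides with that of its corestriction to the faithful exact subcategory $\bigcup_i \Perv_{\sigma_i}(X_i)$. This corestriction exists because, by the source-side equivalence, every object of $\DAct(X)$ comes from some $X_i$. Once this is in place, the argument concludes exactly as in the case of a point.
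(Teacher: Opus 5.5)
Your proposal is correct and follows the paper's proof, which is exactly the combination of the compatibility of universal abelian factorizations with filtered $2$-colimits \cite[Lem.~5.12]{Ter24UAF} and Ayoub's continuity $\twocolim_i \DAct(X_i) \simeq \DAct(X)$ \cite[Cor.~3.22]{Ayo14Et}. Two small corrections. First, $k_i$-constructibility is a \emph{stronger} condition than $k$-constructibility, not a weaker one; your inclusion $\Perv_{\sigma_i}(X_i) \subset \Perv_{\sigma}(X)$ is nonetheless right. Second, since every $k$-stratification of $X$ descends to some $k_i$, the union of the $\Perv_{\sigma_i}(X_i)$ is all of $\Perv_{\sigma}(X)$, so your ``main obstacle'' does not actually arise. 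Your corestriction argument would handle it correctly in any case.
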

\begin{proof}
	As in the proof of \Cref{lem:MNori_colim}, this follows from the fact that the analogous functor of Voevodsky motivic sheaves
	\begin{equation*}
		\twocolim_{i \in I} \DAct(X_i) \to \DAct(X)
	\end{equation*}
	is an equivalence (see \cite[Cor.~3.22]{Ayo14Et}).
\end{proof}

The main structural result in the theory of Nori motivic sheaves is the following:
\begin{thm}[{\cite[Thm.~5.1]{IM24}}; {\cite[Thm.~2.1, Thm.~5.1]{Ter24Nori}}; {\cite[Constr.~3.15, Prop.~A.5]{Tub25}}]\label[thm]{thm:Nori-6ff}
	As $X$ varies among $k$-varieties, the derived categories $D^b(\MNori_{\sigma}(X))$ are endowed with a canonical six-functor formalism, and the forgetful functors
	\begin{equation*}
		D^b(\MNori_{\sigma}(X)) \xrightarrow{\iota_X} D^b(\Perv_{\sigma}(X)) \xrightarrow{\sim} D^b_{\sigma}(X)
	\end{equation*}
	commute with the six operations.
\end{thm}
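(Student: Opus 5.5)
This theorem is a compilation of results from the cited sources, so the plan is to assemble the six-functor formalism from them rather than build it from scratch. The construction proceeds in three steps.

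\emph{Step 1: the four functors $f^*, f_*, f_!, f^!$.} Following \cite[Thm.~5.1]{IM24}, I would first construct these functors on the derived categories $D^b(\MNori_{\sigma}(X))$. The key input is that $\MNori_{\sigma}(X)$ is the universal abelian factorization of $\beta_{\sigma,X} = {^pH^0} \circ \Bti^*_{\sigma,X}$.
\begin{itemize}
\item For an affine morphism $f$, the functors $f_*$ and $f_!$ are right (respectively left) $t$-exact for the perverse $t$-structure, up to shift.
\item For a closed immersion, they are exact.
\item For an étale map, $f^*$ is exact.
\end{itemize}
Exactness lets these functors descend to the universal factorizations via the universal property, together with the commutation of Betti realization with the operations on $\DAct$ \cite{Ayo10}. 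One then extends to general morphisms using Beilinson's equivalence $D^b(\Perv) \simeq D^b_{\sigma}$ and Nori's basic lemma, for instance via affine Čech-type resolutions. By construction, the forgetful functors $\iota_X$ commute with these operations.

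\emph{Step 2: the tensor product and internal hom.} This is the contribution of \cite[Thm.~2.1, Thm.~5.1]{Ter24Nori}. I would define $\otimes$ through the exterior product $\boxtimes \colon \MNori_{\sigma}(X) \times \MNori_{\sigma}(Y) \to \MNori_{\sigma}(X \times Y)$. This product exists because universal abelian factorizations are compatible with exact bifunctors (as in \cite{Ter24UAF}), and ${^pH^0}$ of $\boxtimes$ is $t$-exact on perverse sheaves. One then sets $A \otimes B := \Delta^*(A \boxtimes B)$. Internal hom is obtained by duality: $\mathcal{H}om(A,B) = \mathbb{D}(A \otimes \mathbb{D}B)$, with Verdier duality $\mathbb{D}$ lifted because it is exact on perverse sheaves.

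\emph{Step 3: $\infty$-categorical coherence.} This is handled in \cite[Constr.~3.15, Prop.~A.5]{Tub25}: one enhances the construction to stable $\infty$-categories and checks the adjunctions, the projection formula and base change. The strategy is to reduce each of these to its perverse-sheaf counterpart, using that $\iota_X$ is conservative and that the relevant natural transformations are defined compatibly.

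I expect the main obstacle to be exactly this coherence step: assembling the operations constructed separately (four functors, tensor, duality) into one formalism whose structural isomorphisms are compatible. Conservativity of $\iota_X$ lets us check that maps are isomorphisms, but it does not by itself produce the maps. My first attempt would be Tubach's approach, which realizes $D^b(\MNori_{\sigma}(-))$ as a functor of $\infty$-categories built from $\DAct$ through a universal construction; the coherences are then inherited from Ayoub's formalism on $\DAct$ and transported along $\Bti^*$.
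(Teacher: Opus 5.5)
Your proposal matches the paper, which gives no argument of its own and assembles the statement by citing exactly these sources: \cite[Thm.~5.1]{IM24} for the four operations compatible with the forgetful functors, \cite[Thm.~2.1, Thm.~5.1]{Ter24Nori} for the tensor structure (extending Nori's monoidal structure over $\Spec(k)$), \cite[Constr.~3.15, Prop.~A.5]{Tub25} for the coherent six-functor formalism, and Beilinson's equivalence \cite[Thm.~1.3]{Beilinson} for $D^b(\Perv_{\sigma}(X)) \simeq D^b_{\sigma}(X)$. Your descriptions of how these references work internally are only heuristic. That does not matter here, since the theorem is imported wholesale rather than reproved.
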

The last passage in the equation witnesses Beilinson's equivalence, as in \cite[Thm.~1.3]{Beilinson}.
Note that the result incorporates Nori's \Cref{thm:Nori_tens}, which gives the monoidal structure over $\Spec(k)$.

Using the six operations, one can easily relate different categories of motivic perverse sheaves in the expected way.
As an example:
\begin{lem}\label[lem]{lem:MNori(X)=MNori(X')^Gal}
	Let $e \colon X' \to X$ be a finite étale morphism.
	Then, the inverse image functor $e^* \colon \MNori_{\sigma}(X) \to \MNori_{\sigma}(X')$ induces an equivalence 
	\begin{equation*}
		\MNori_{\sigma}(X) \to \MNori_{\sigma}(X')^{\Gal(X'/X)}
	\end{equation*}
	towards the $\Gal(X'/X)$-equivariant objects with respect to the $\Gal(X'/X)$-action on $\MNori_{\sigma}(X')$ induced by the natural Galois action on $X'$.
\end{lem}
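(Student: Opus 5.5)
The plan is to reduce the statement to Galois descent for the two categories of constructible complexes, $D^b(\MNori_{\sigma}(X))$ and $D^b_{\sigma}(X)$, using the six-functor formalism of \Cref{thm:Nori-6ff}, and then pass to the perverse hearts. Write $G := \Gal(X'/X)$. For each $g \in G$ there is a canonical isomorphism $e \circ g = e$. This gives canonical isomorphisms $g^* e^* \simeq e^*$, compatible with composition, so $e^*$ lifts to a functor towards $G$-equivariant objects.

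The first step is to establish full faithfulness. Since $e$ is finite étale, we have $e^* = e^!$ and $e_* = e_!$. Hence $e^*$ is t-exact for the perverse t-structure and preserves the hearts, and the adjunction $(e^*, e_*)$ is available on $D^b(\MNori_{\sigma}(-))$. For $A, B \in \MNori_{\sigma}(X)$, the $G$-invariant morphisms $e^*A \to e^*B$ correspond to $G$-invariant morphisms $A \to e_* e^* B$. So full faithfulness reduces to a single claim: the unit $B \to e_* e^* B$ identifies $B$ with the $G$-invariant part of $e_* e^* B$. To construct the splitting, I would use the trace map $\mathrm{tr} \colon e_* e^* B = e_! e^! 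B \to B$ coming from the counit of $(e_!, e^!)$. Then $\frac{1}{|G|}\,\mathrm{tr}$ gives a retraction of the unit onto the image of the idempotent $\frac{1}{|G|}\sum_{g \in G} g$. This makes sense because the categories are $\Q$-linear and idempotent complete (they are abelian). Whether this identification holds can be checked after applying the faithful exact functor $\iota_X$, since $\iota_X$ commutes with $e^*$, $e_*$ and the (co)units. After applying $\iota_X$, it is the classical statement for perverse sheaves on $X^{\sigma}$ with respect to the Galois cover $X'^{\sigma} \to X^{\sigma}$.

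The second step is essential surjectivity. Let $(A', \{\phi_g\}_{g \in G})$ be an equivariant object. Then $G$ acts on $e_* A'$ through the isomorphisms $e_* A' \simeq e_* g^* A'$ induced by $\phi_g$. I would define $A$ as the image of the idempotent $\frac{1}{|G|}\sum_{g \in G} g$ acting on $e_* A' \in \MNori_{\sigma}(X)$. Here $e_*$ is exact on the hearts because $e$ is finite. It then remains to check that the natural map $e^*A \to A'$, obtained by adjunction from the inclusion $A \to e_*A'$, is an equivariant isomorphism. As before, this can be verified after applying $\iota_{X'}$, where it becomes classical descent of perverse sheaves along finite étale Galois covers. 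This verification uses the base-change isomorphism $e^* e_* \simeq \bigoplus_{g \in G} g^*$ for the Cartesian square $X' \times_X X' \simeq \coprod_{g \in G} X'$. That isomorphism is available motivically through proper base change in the six-functor formalism, and it is compatible with $\iota$.

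The main obstacle is bookkeeping rather than substance. One must ensure that the action of $G$ on $\MNori_{\sigma}(X')$ and the equivariance data are the ones induced by the six-functor formalism, and that the trace, unit and base-change maps are compatible with the forgetful functors, so that every check can be transported to perverse sheaves. If this compatibility is not directly available on the derived level, I would instead argue with faithfulness of $\iota$ on the abelian level only. There, all the maps involved are morphisms in $\MNori_{\sigma}$, and faithfulness and exactness of $\iota$ suffice to detect isomorphisms and idempotent images.
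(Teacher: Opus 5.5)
Your argument is correct, but it follows a different route from the paper's proof.

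\textbf{The paper's route.} The paper does no hands-on construction. It reduces the statement to the fact that motivic perverse sheaves form a stack for the étale topology \cite[Prop.~2.7]{IM24}, exactly as in the field case \cite[Lem.~2.13(2)]{JT25}. For a Galois cover, descent data along $e$ are the same as $G = \Gal(X'/X)$-equivariant structures, because $X' \times_X X' \simeq \coprod_{g} X'$.

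\textbf{Your route.} You prove Galois descent directly.
\begin{enumerate}
\item Full faithfulness comes from the adjunction $e^* \dashv e_*$ together with $B \simeq (e_* e^* B)^G$.
\item Essential surjectivity comes from taking $A := (e_* A')^G$, the image of the idempotent $\frac{1}{|G|}\sum_{g} g$.
\item Every needed isomorphism or equality of morphisms, including compatibility with the equivariant structures, is checked after applying the forgetful functor. That functor is faithful and exact, hence conservative, and it commutes with $e^*$, $e_*$, the (co)units, base change and images of idempotents.
\end{enumerate}

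\textbf{Comparison.} Your route is self-contained given the finite-étale part of the six-functor formalism of \Cref{thm:Nori-6ff}. It gives an explicit formula for the descended object, and it uses essentially that $|G|$ is invertible in $\Q$; the same averaging idea reappears in the proof of \Cref{lem:split-bc}. The paper's route is shorter and rests on a more general descent statement, valid for arbitrary étale coverings, at the cost of treating that statement as a black box.

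\textbf{Two small remarks.} First, the trace map is not needed: conservativity of $\iota_X$ already identifies the unit $B \to e_* e^* B$ with the inclusion of the image of the averaging idempotent. Second, you tacitly use that $e$ is a Galois cover; this is implicit in the statement.
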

\begin{proof}
	In the special case when $e$ is induced by a finite field extension $k'/k$, the result is \cite[Lem.~2.13(2)]{JT25}:
	the proof comes down to the fact that motivic perverse sheaves form a stack for the étale topology (by \cite[Prop.~2.7]{IM24}).
	The same argument works in the general case considered here.
\end{proof}

\begin{rem}\label[rem]{rem:MNori_sigma(k')=MNori_sigma'(k')}
	Suppose that $e$ is induced by a finite extension $k'/k$, and fix a complex embedding $\sigma' \colon k' \hookrightarrow \C$ extending $\sigma$.
	We can regard $X'$ either as a $k'$-variety or as a $k$-variety.
	However, the corresponding categories $\MNori_{\sigma'}(X')$ and $\MNori_{\sigma}(X')$ are canonically equivalent (by \cite[Prop.~6.11]{IM24}, see also \cite[Prop.~1.14]{Ter24Nori}).
\end{rem}

\subsection{Nori motivic local systems}

Let now $X$ be a smooth, connected $k$-variety.
The shift endofunctor $[\dim(X)] \colon D^b_{\sigma}(X) \to D^b_{\sigma}(X)$ identifies the abelian category of ordinary local systems over $X^{\sigma}$ with a full abelian subcategory $\Loc_{\sigma}(X) \subset \Perv_{\sigma}(X)$, stable under subquotients and extensions (for example, see \cite[Lem.~1.29]{JT25}).
This naturally generalizes to arbitrary smooth $k$-varieties, by considering one connected component at a time.
In the present paper, we always regard local systems as perverse sheaves in this way.

We are mostly interested in the case when $X$ is geometrically connected over $k$ --
or, equivalently, when the complex-analytic space $X^{\sigma}$ is connected (see \cite[Exp.~XII, Cor.~2.6]{SGA1}).
In this case, ordinary local systems over $X^{\sigma}$ form a neutral Tannakian category over $\Q$ with respect to the usual tensor product.
With our conventions, this corresponds to the shifted tensor product
\begin{equation*}
	- \otimes^{\dagger} - := (- \otimes -)[-\dim(X)] \colon \Loc_{\sigma}(X) \times \Loc_{\sigma}(X) \to \Loc_{\sigma}(X).
\end{equation*}
The Tannaka dual of $\Loc_{\sigma}(X)$ at a point $x \in X^{\sigma}$ is the algebraic fundamental group $\pi_1^{\alg}(X^{\sigma},x)$, defined as the $\Q$-algebraic completion of the usual topological fundamental group $\pi_1(X^{\sigma},x)$.

For every morphism $f \colon X \to Y$ between smooth, connected $k$-varieties, the shifted inverse image functor
\begin{equation*}
	f^{\dagger} := f^*[\dim(X) - \dim(Y)] \colon D^b_{\sigma}(Y) \to D^b_{\sigma}(X)
\end{equation*}
restricts to a monoidal functor
\begin{equation*}
	f^{\dagger} \colon \Loc_{\sigma}(Y) \to \Loc_{\sigma}(X).
\end{equation*} 
This is our model for the theory of Nori motivic local systems.
In the following, let $X$ be a smooth, geometrically connected $k$-variety.

\begin{nota}[{\cite[Defn.~6.1]{Ter24Nori}}]
	An object $M \in \MNori_{\sigma}(X)$ is called a \textit{motivic local system} if its underlying perverse sheaf $\iota_X(M) \in \Perv_{\sigma}(X)$ belongs to $\Loc_{\sigma}(X)$. 
	We let $\MNoriLoc_{\sigma}(X)$ denote the full subcategory of $\MNori_{\sigma}(X)$ spanned by the motivic local systems:
	it is an abelian subcategory, stable under subquotients and extensions.
\end{nota}

Motivic local systems inherit a canonical monoidal structure, based on the shifted tensor product
\begin{equation*}
	- \otimes^{\dagger} - \colon \MNoriLoc_{\sigma}(X) \times \MNoriLoc_{\sigma}(X) \to \MNoriLoc_{\sigma}(X).
\end{equation*} 

\begin{thm}[{\cite[Thm.~6.3]{Ter24Nori}}]
	The abelian category $\MNoriLoc_{\sigma}(X)$, endowed with the shifted tensor product, is canonically neutral Tannakian over $\Q$.
\end{thm}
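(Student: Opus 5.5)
The plan is to obtain the Tannakian structure formally from Nori's \Cref{thm:Nori_tens} and the six-functor formalism of \Cref{thm:Nori-6ff}, then check the axioms against the faithful exact functor $\iota_X \colon \MNoriLoc_{\sigma}(X) \to \Loc_{\sigma}(X)$ composed with a stalk functor.

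First, I define the monoidal structure. For $M, N \in \MNoriLoc_{\sigma}(X)$, set $M \otimes^{\dagger} N := \Delta^*(M \boxtimes N)[-\dim(X)]$, where $\Delta \colon X \to X \times_k X$ is the diagonal and $\boxtimes$ is the external product in $D^b(\MNori_{\sigma}(X \times_k X))$ supplied by \Cref{thm:Nori-6ff}. Since $\iota_X$ commutes with the six operations, the underlying complex is the shifted tensor product of the local systems $\iota_X(M)$ and $\iota_X(N)$. That complex is again a local system, placed in perverse degree $0$. Because $\iota_X$ is faithful and exact and detects the perverse heart, $M \otimes^{\dagger} N$ lies in $\MNoriLoc_{\sigma}(X)$.

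Next, the unit is $\Q_X^{\dagger} := a_X^*\Q_k[\dim(X)]$, which is a motivic local system by the same argument. The associativity, commutativity and unit constraints, together with their coherence, come from the symmetric monoidal structure of the six-functor formalism on $D^b(\MNori_{\sigma}(-))$. Rigidity comes from the dual $M^{\vee} := \mathbb{D}_X(M)[\ldots]$, suitably shifted and twisted, or equivalently the internal Hom $\underline{\mathrm{Hom}}(M,\Q_X^{\dagger})$ shifted. Its underlying object is the dual local system. The evaluation and coevaluation maps exist motivically by adjunction, and the zig-zag identities can be checked after applying the faithful functor $\iota_X$. Exactness of $\otimes^{\dagger}$ in each variable also follows from faithfulness and exactness of $\iota_X$, since the underlying tensor product over $\Q$ is exact. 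Finally, $\mathrm{End}(\Q_X^{\dagger}) = \Q$: it injects via $\iota_X$ into $\mathrm{End}(\Q_{X^{\sigma}}) = \Q$, because $X^{\sigma}$ is connected by geometric connectedness, and it contains $\Q$.

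For neutrality, I fix a point $x \in X^{\sigma}$ and take the fibre functor $M \mapsto (\iota_X M)_x[-\dim X]$. It is exact and faithful because a nonzero local system on a connected space has nonzero stalks. It is monoidal because the stalk of a tensor product of local systems is the tensor product of the stalks. By Deligne--Milne this makes $\MNoriLoc_{\sigma}(X)$ neutral Tannakian over $\Q$. The word ``canonically'' amounts to saying that the construction depends only on the six-functor formalism.

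I expect the main obstacle to be the coherence of the constraints. The six-functor formalism must provide a symmetric monoidal structure on the abelian category, and not only one up to homotopy on the derived category. The $\infty$-categorical enhancement gives this, since we restrict to a heart, which is a 1-category. The commutativity constraint then needs the usual shift sign to be handled consistently, and that sign can be checked on underlying local systems, again by faithfulness of $\iota_X$.
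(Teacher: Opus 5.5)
Your proposal is correct and follows the route the paper indicates (it imports the result from \cite[Thm.~6.3]{Ter24Nori}): the shifted tensor structure on $\MNoriLoc_{\sigma}(X)$ and the monoidality of the forgetful functor $\iota_{\sigma,X}$ are inherited from the six-functor formalism of \Cref{thm:Nori-6ff}, and the fibre functors are the composites $x^{\dagger}\circ\iota_{\sigma,X}$; it is the existence of $\iota_{\sigma,X}$, rather than independence of choices, that ``canonically'' refers to in the paper. One step needs tightening: the coevaluation does not come from adjunction alone, but from inverting the canonical map $M^{\vee}\otimes^{\dagger}M\to\underline{\mathrm{Hom}}(M,M)$ (suitably shifted), which is an isomorphism because it becomes one after applying the conservative functor $\iota_X$.
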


Here, the adverb "canonically" refers to the existence of the canonical monoidal exact functor $\iota_{\sigma,X} \colon \MNoriLoc_{\sigma}(X) \to \Loc_{\sigma}(X)$;
the monoidality is inherited from \Cref{thm:Nori-6ff}.
This yields a natural family of fibre functors on motivic local systems:
for every point $x \in X^{\sigma}$, the fibre functor at $x$ is defined as the composite
\begin{equation*}
	\MNoriLoc_{\sigma}(X) \xrightarrow{\iota_{\sigma,X}} \Loc_{\sigma}(X) \xrightarrow{x^{\dagger}} \vect_{\Q},
\end{equation*}
where $x^{\dagger} := x^*[-\dim(X)] \colon \Loc_{\sigma}(X) \to \vect_{\Q}$ denotes the shifted inverse image functor.

\begin{nota}
	For every point $x \in X^{\sigma}$, we let $\Gmot(X,x;\sigma)$ denote the Tannaka dual of $\MNoriLoc_{\sigma}(X)$ with respect to the fibre functor at $x$.
	Similarly, for every $M \in \MNoriLoc_{\sigma}(X)$ we let $\Gmot(M,x;\sigma)$ denote the Tannaka dual of the Tannakian subcategory $\langle M \rangle^{\otimes} \subset \MNoriLoc_{\sigma}(X)$ with respect to the same fibre functor.
\end{nota}

As in the case of Nori motives over fields, a source of non-trivial motivic local systems is the Galois theory of algebraic varieties: 
\begin{ex}[{\cite[Prop.~5.13]{Tub25Artin}}; see also {\cite[Ex.~2.20]{JT25}}]\label[ex]{ex:MNoriLoc_Artin}
	Let $e \colon X' \to X$ be a finite étale cover.
	The associated \textit{Artin motivic local system} is defined as
	\begin{equation*}
		{^p \Q}_{X'/X} := \pi_{\sigma,X}(e_* \unit_{X'}[\dim(X)]) = e_* \pi_{\sigma,X'}(\unit_{X'}[\dim(X)]) \in \MNoriLoc_{\sigma}(X),
	\end{equation*}
	where $\unit_{X'}$ denotes the tensor-unit of $\DAct(X')$.
	If the covering $X'/X$ is Galois, the motivic Galois group $\Gmot({^p \Q}_{X'/X},x;\sigma)$ is canonically isomorphic to the Galois group $\Gal(X'/X)$, regarded as a finite constant algebraic group over $\Q$.
	\qed
\end{ex}

\begin{nota}
	We let $\MNoriLoc_{\sigma}^{\Artin}(X)$ denote the union of the subcategories $\langle {^p \Q}_{X'/X} \rangle^{\otimes}$ as $X'$ varies among all connected finite étale covers of $X$, and we call it the category of \textit{Artin motivic local systems} over $X$:
	it is again a Tannakian subcategory of $\MNoriLoc_{\sigma}(X)$.
\end{nota}

Generalizing the case of Artin motives over a field, the Tannaka dual of $\MNoriLoc_{\sigma}^{\Artin}(X)$ is isomorphic to the étale fundamental group $\pi_1^{\et}(X,x)$, regarded as a pro-finite algebraic group over $\Q$.
The inclusion $\MNoriLoc_{\sigma}^{\Artin}(X) \subset \MNoriLoc_{\sigma}(X)$ determines a canonical surjection
\begin{equation*}
	\Gmot(X,x;\sigma) \twoheadrightarrow \pi_1^{\et}(X,x).
\end{equation*}
Later in the paper, we need to consider the maximal Artin quotients of finite-dimensional motivic Galois groups.
To this end, we introduce the following notation:
\begin{nota}
	For every $M \in \MNoriLoc_{\sigma}(X)$, we let $\langle M \rangle^{\otimes}_{\Artin}$ denote the intersection $\langle M \rangle^{\otimes} \cap \MNori_{\sigma}^{\Artin}(X)$ inside $\MNoriLoc_{\sigma}(X)$:
	it is again a Tannakian subcategory of $\MNoriLoc_{\sigma}(X)$.
	Moreover, we let $\Gmot^{\Artin}(M,x;\sigma)$ denote the Tannaka dual of $\langle M \rangle^{\otimes}_{\Artin}$ with respect to the fibre functor at $x$.
\end{nota}

Under Tannaka duality, the monoidal functor $\iota_{\sigma,k} \colon \MNoriLoc_{\sigma}(X) \to \Loc_{\sigma}(X)$ determines a canonical homomorphism of pro-algebraic groups
\begin{equation}\label{eq:pi1alg-to-Gmot(X)}
	\pi_1^{\alg}(X^{\sigma},x) \to \Gmot(X,x;\sigma).
\end{equation}
Note that the composite homomorphism
\begin{equation*}
	\pi_1^{\alg}(X^{\sigma},x) \to \Gmot(X,x;\sigma) \twoheadrightarrow \pi_1^{\et}(X,x)
\end{equation*}
is still surjective:
it corresponds to the Tannakian subcategory of $\Loc_{\sigma}(X)$ spanned by the local systems arising from finite étale covers of $X$.

Given a morphism between smooth, geometrically connected $k$-varieties $f \colon X \to Y$, the shifted inverse image functor
\begin{equation*}
	f^{\dagger} := f^*[\dim(X) - \dim(Y)] \colon D^b(\MNori_{\sigma}(Y)) \to D^b(\MNori_{\sigma}(X))
\end{equation*}
restricts to a monoidal exact functor
\begin{equation*}
	f^{\dagger} \colon \MNoriLoc_{\sigma}(Y) \to \MNoriLoc_{\sigma}(X).
\end{equation*}
For every base-point $x \in X^{\sigma}$, this determines a homomorphism of Tannaka-dual groups
\begin{equation*}
	\Gmot(X,x;\sigma) \to \Gmot(Y,f(x);\sigma).
\end{equation*}
In the case of the structural projection $a_X \colon X \to \Spec(k)$, we have the following result:

\begin{lem}[{\cite[Rmk.~2.19]{JT25}}]\label[lem]{lem:a_X^*-fullyfaith}
	The shifted inverse image functor
	\begin{equation*}
		a_X^{\dagger} \colon \MNori_{\sigma}(k) \to \MNoriLoc_{\sigma}(X)
	\end{equation*}
	is fully faithful, with essential image stable under subquotients.
\end{lem}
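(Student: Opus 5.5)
The plan is to produce a retraction of $a_X^{\dagger}$ using a rational point where possible, and otherwise to reduce to that case by finite descent. I will use throughout that a fibre functor at $x \in X^{\sigma}$ composes with $a_X^{\dagger}$ to give $\iota_{\sigma,k}$. By \cite[Prop.~2.21(a)]{DM82}, full faithfulness together with stability of the essential image under subquotients is equivalent to surjectivity of the induced homomorphism
\begin{equation*}
	\Gmot(X,x;\sigma) \to \Gmot(k;\sigma).
\end{equation*}
So it suffices to show that this homomorphism is surjective.

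First suppose $X$ has a $k$-rational point $x_0 \colon \Spec(k) \to X$. Then $a_X \circ x_0 = \mathrm{id}$, and the six-functor formalism of \Cref{thm:Nori-6ff} gives $x_0^{\dagger} \circ a_X^{\dagger} \simeq \mathrm{id}$ on $\MNori_{\sigma}(k)$. Here $x_0^{\dagger} = x_0^*[-\dim X]$ sends motivic local systems to $\MNori_{\sigma}(k)$, since its underlying complex is concentrated in degree $0$ and $\iota$ is conservative and t-exact. On Tannaka groups, based at the complex point $\sigma(x_0)$, this says that $\Gmot(k;\sigma) \to \Gmot(X,x_0;\sigma) \to \Gmot(k;\sigma)$ is the identity. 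The second map is therefore surjective. Surjectivity does not depend on the base point, because fibre functors at different points are isomorphic via paths.

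In general, $X$ is smooth and geometrically connected, so it has a point over some finite extension $k'/k$, which we may take Galois inside $\overline{k}$. By \Cref{rem:MNori_sigma(k')=MNori_sigma'(k')}, the rational-point case applies to $X_{k'}$ over $k'$. Now take $N \in \MNori_{\sigma}(k)$ and a subobject $P \subset a_X^{\dagger}N$ in $\MNoriLoc_{\sigma}(X)$. Its pullback to $X_{k'}$ is of the form $a_{X_{k'}}^{\dagger}Q$ with $Q \subset N_{k'}$, and $Q$ is unique because of full faithfulness over $k'$. The Galois descent data on $P$ then transport to $Q$, again by full faithfulness. By \Cref{lem:MNori(X)=MNori(X')^Gal}, applied once to $\Spec(k')/\Spec(k)$ and once to $X_{k'}/X$, $Q$ descends to a subobject of $N$ whose pullback is $P$.

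Full faithfulness over $k$ follows in the same way. Morphisms over $X$ are $\Gal(k'/k)$-invariant morphisms over $X_{k'}$, and the same holds over $k$, so the two Hom spaces match via the $k'$ case. The step I expect to be most delicate is checking that the equivariant structures correspond under $a^{\dagger}$ compatibly. This should be formal from the étale-stack property of \cite[Prop.~2.7]{IM24}.
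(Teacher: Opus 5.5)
The paper gives no argument of its own for this lemma; it only cites \cite[Rmk.~2.19]{JT25}, so there is no in-text proof to match yours against. Your argument is correct.

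\textbf{Rational-point case.} You could bypass Tannaka duality here. The functor $x_0^{\dagger}$ is exact and faithful on $\MNoriLoc_{\sigma}(X)$, because its composite with $\iota_{\sigma,k}$ is the fibre functor at a point of the connected space $X^{\sigma}$. Hence $x_0^{\dagger} \circ a_X^{\dagger} \simeq \mathrm{id}$ directly forces fullness of $a_X^{\dagger}$. It also makes $a_X^{\dagger}$ a bijection on subobject lattices: the composite $\mathrm{Sub}(N) \to \mathrm{Sub}(a_X^{\dagger}N) \to \mathrm{Sub}(N)$ is the identity, and the second map is injective.

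\textbf{Descent step.} The descent along $X_{k'} \to X$ and $\Spec(k') \to \Spec(k)$, via \Cref{lem:MNori(X)=MNori(X')^Gal} and \Cref{rem:MNori_sigma(k')=MNori_sigma'(k')}, is sound. The compatibility you flag is indeed formal. Each $g \in \Gal(k'/k)$ commutes with the structural morphism $X_{k'} \to \Spec(k')$ as a morphism of $k$-varieties. The pseudo-functoriality of inverse images therefore makes $a^{\dagger}$ compatible with the equivariant structures, and injectivity on subobject lattices over $k'$ gives Galois stability of $Q$.

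\textbf{A shorter alternative.} A more direct route avoids rational points and descent by using the adjunction from \Cref{thm:Nori-6ff}.
\begin{itemize}
\item On the abelian level, $a_X^{\dagger}$ has the right adjoint $L \mapsto H^0(a_{X,*}L[-\dim(X)])$, because $a_{X,*}L[-\dim(X)]$ lies in degrees $\geq 0$.
\item The unit of this adjunction is an isomorphism, since it becomes one after realization: $H^0(X^{\sigma};\Q)=\Q$ by geometric connectedness. This gives full faithfulness.
\item For a subquotient $P$ of some $a_X^{\dagger}N$, the counit at $P$ is an isomorphism after realization, since a subquotient of a constant local system on a connected space is constant. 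This gives stability of the essential image.
\end{itemize}
That version works over $k$ in one stroke and needs no Tannakian formalism, but it relies on direct images and on compatibility of the realization with the adjunction. Yours uses only inverse images, at the cost of the Tannakian reduction, the existence of points over finite extensions, and Galois descent for Nori motivic sheaves.
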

 
This means that the induced homomorphism of Tannaka-dual groups
\begin{equation}\label{eq:Gmot(X)-to-Gmot(k)}
	\Gmot(X,x;\sigma) \to \Gmot(k;\sigma)
\end{equation}
is surjective (by \cite[Prop.~2.21(a)]{DM82}).
If the base-point $x \in X^{\sigma}$ is $k$-rational, this surjection acquires a canonical splitting, induced by the inclusion $x \colon \Spec(k) \hookrightarrow X$.
This suggests that the structure of $\Gmot(X,x;\sigma)$ can be better understood when $x$ is algebraic enough.
However, since a general $k$-variety might not have $k$-rational points at all, it is convenient to look at all $\overline{k}$-rational points.
In this case, the difference between motivic local systems over $X$ and motives over $k$ can be measured as follows: 
\begin{thm}[{\cite[Thm.~7.7(1)]{Jac25}}; {\cite[Thm.~2.24]{JT25}}]\label[thm]{thm:fund_ses}
	For every $x \in X(\overline{k})$, the sequence of pro-algebraic groups
	\begin{equation*}
		\pi_1^{\alg}(X^{\sigma},x) \to \Gmot(X,x;\sigma) \to \Gmot(k;\sigma) \to 1
	\end{equation*}
	defined by the homomorphisms \eqref{eq:pi1alg-to-Gmot(X)} and \eqref{eq:Gmot(X)-to-Gmot(k)} is exact;
	it is even split-exact if $x \in X(k)$.
\end{thm}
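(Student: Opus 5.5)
We prove the two assertions of \Cref{thm:fund_ses} separately: surjectivity on the right, exactness in the middle, and splitting when $x \in X(k)$. Surjectivity of $\Gmot(X,x;\sigma) \to \Gmot(k;\sigma)$ is already available. By \Cref{lem:a_X^*-fullyfaith}, $a_X^{\dagger}$ is fully faithful with essential image stable under subquotients, and \cite[Prop.~2.21(a)]{DM82} then gives the surjection. If $x \in X(k)$, the closed immersion $x \colon \Spec(k) \hookrightarrow X$ gives a monoidal functor $x^{\dagger} \colon \MNoriLoc_{\sigma}(X) \to \MNori_{\sigma}(k)$. It is compatible with the fibre functors, since $\iota_{\sigma,k} \circ x^{\dagger} \cong x^{\dagger} \circ \iota_{\sigma,X}$ by \Cref{thm:Nori-6ff}. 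Moreover $x^{\dagger} \circ a_X^{\dagger} \cong \mathrm{id}$, because $a_X \circ x = \mathrm{id}$. Dually this yields a section $\Gmot(k;\sigma) \to \Gmot(X,x;\sigma)$. So the splitting reduces to middle exactness.

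For middle exactness I would use the standard Tannakian criterion for exactness of $H \to G \to Q \to 1$ (Esnault--Hai--Sun style, see also \cite[Appendix]{Jac25}). Write $F = \iota_{\sigma,X}$ and $q = a_X^{\dagger}$. One has to check three things.
\begin{itemize}
\item[(a)] For $M \in \MNoriLoc_{\sigma}(X)$, the local system $F(M)$ is trivial if and only if $M \cong q(N)$ for some $N$.
\item[(b)] For every $M$, the maximal trivial sub-local system $F(M)^{0}$ of $F(M)$ is the image of a motivic subobject $M^{0} \subset M$.
\item[(c)] Every object of $\Loc_{\sigma}(X)$ is a subquotient of some $F(M)$. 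This condition concerns the closed immersion part of the criterion. For exactness only in the middle, i.e.\ $\mathrm{im}(\pi_1^{\alg}) = \ker$, we need just a weaker form: the normal closure of the image. It is then better to use the version saying that the image of $\pi_1^{\alg}$ equals the kernel if and only if (a) and (b) hold. With kernel $K$, the category $\mathrm{Rep}(G)$ restricted along $\pi_1^{\alg}$ has invariants given by $q$-objects.
\end{itemize}

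Condition (b) is a motivic theorem of the fixed part. Set $M^{0} := a_X^{\dagger}\, {^pH^0}(a_{X*}M)$ up to the correct shift, with the counit map $M^{0} \to M$. Since $F$ commutes with the six operations (\Cref{thm:Nori-6ff}), $F(M^{0})$ is $a_X^{*}H^0(X^{\sigma},F(M))$ mapping to $F(M)$. This is exactly the inclusion of global sections, i.e.\ of the constant part $F(M)^{0}$, because $X^{\sigma}$ is connected. Hence the map is injective, and $M^{0}$ is a subobject realizing $F(M)^{0}$. The direction "if" of (a) is clear. For "only if", if $F(M)$ is trivial then $F(M^{0}) = F(M)$, so $M^{0} \to M$ is an isomorphism by faithfulness, and $M \cong q(N)$.

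The main obstacle is the subtlety hidden above. The Tannakian criterion needs the invariants of $K$ for the $G$-representations. These agree with $\pi_1^{\alg}$-invariants only if the image of $\pi_1^{\alg}$ is normal, or after closing under the normal closure. Normality would follow because $\pi_1^{\alg}$-invariant subspaces of $F(M)$ are themselves motivic by (b), hence stable under $G$. The hypothesis $x \in X(\overline{k})$ enters exactly in identifying fibre functors. I expect to reduce to this case via \Cref{lem:MNori(X)=MNori(X')^Gal}: base change to a finite extension $k'$ with $x \in X(k')$, apply the split case there, and descend using \Cref{prop:ses_Gmot-Gmot-Gal} and \Cref{cor:ses_Gmot-Artin}. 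Making this Galois descent compatible with the base point is where I expect the real work.
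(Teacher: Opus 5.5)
\textbf{Verdict: there is a genuine gap, at middle exactness.} Note that the paper gives no proof of this statement; it imports it from \cite[Thm.~7.7(1)]{Jac25} and \cite[Thm.~2.24]{JT25}, so what follows assesses your argument on its own terms.

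\textbf{What is fine.} Your right-exactness argument, the splitting for $x\in X(k)$, and your conditions (a) and (b) are all correct. The map $a_X^{\dagger}\,{}^pH^{-\dim(X)}(a_{X*}M)\to M$ induced by the counit realizes to the inclusion of the constant part. The faithful exact functor $\iota_{\sigma,X}$ reflects monomorphisms and isomorphisms.

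\textbf{Where it fails.} Write $G=\Gmot(X,x;\sigma)$, $K=\ker(G\to\Gmot(k;\sigma))$, and $P\subset K$ for the image of $\pi_1^{\alg}(X^{\sigma},x)$. Contrary to what you assert, (a) and (b) do not imply $P=K$. Likewise, ``$\pi_1^{\alg}$-invariants are motivic, hence $G$-stable'' does not imply that $P$ is normal.

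A toy counterexample: take $G=\mathrm{SL}_2$ with trivial quotient and $P=B$ a Borel subgroup.
\begin{itemize}
\item Every $B$-invariant vector in a $G$-representation $V$ is $G$-invariant, because the orbit map $G/B\to V$ is constant ($G/B$ is projective, $V$ is affine). So (b) holds.
\item The normal closure of $B$ is $\mathrm{SL}_2$, so (a) holds.
\item Yet $B$ is not normal, and $B\neq\mathrm{SL}_2$.
\end{itemize}

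What (a) and (b) actually give is $V^{P}=V^{K}$ for every $V\in\mathrm{Rep}(G)$. So the ``obstacle'' you single out is not the real one. Having the same invariant vectors forces $P=K$ only if $P$ is observable, i.e.\ cut out by its invariant vectors.

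This is exactly what condition (c) of the Esnault--Hai--Sun criterion supplies, and you dropped it. First replace $\pi_1^{\alg}$ by its image, the Tannaka dual of local systems of geometric origin. It is not the dual of all of $\Loc_{\sigma}(X)$, as your item (c) says; see the remark right after the theorem. One then needs every local system of geometric origin to embed into some $\iota_{\sigma,X}(M)$. This is genuinely extra input, and your proposal never provides it.

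\textbf{One way to close the gap within your framework} is André's argument for Mumford--Tate groups.
\begin{itemize}
\item Fix $M$, and let $P_M\subset G_M:=\Gmot(M,x;\sigma)$ be the image of $\pi_1^{\alg}$.
\item By Chevalley, $P_M$ is the stabilizer of a line $\Q v\subset x^{\dagger}\iota_{\sigma,X}(V)$ for some $V\in\langle M\rangle^{\otimes}$.
\item The topological fundamental group acts on $\Q v$ through a character with values in $\Q^{\times}$. Via the $\ell$-adic realization (continuity of the action of the profinite completion), these values also lie in $\Z_{\ell}^{\times}$ for every prime $\ell$. Hence they lie in $\{\pm1\}$.
\item Therefore $v^{2}\in x^{\dagger}\iota_{\sigma,X}(\mathrm{Sym}^{2}V)$ is $\pi_1$-invariant, with stabilizer exactly $P_M$.
\item By (b) and then (a), $v^2$ lies in the fibre of an object $a_X^{\dagger}N$, so $K$ fixes it.
\item Thus $K$ maps into $P_M$ for every $M$, whence $K\subset P$.
\end{itemize}

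\textbf{A smaller point.} Your planned Galois descent from $X(\overline{k})$ to rational points is unnecessary. A topological path from $x$ to $y$ induces an isomorphism of fibre functors on $\MNoriLoc_{\sigma}(X)$ that is the identity on objects $a_X^{\dagger}N$. So exactness is independent of the base point, and rationality of $x$ matters only for the splitting.
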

The arrow $\pi_1^{\alg}(X^{\sigma},x) \to \Gmot(X,x;\sigma)$ is in general not a closed immersion, because not every local system on $X^{\sigma}$ is a subquotient of one underlying a motivic local system.
The full subcategory of $\Loc_{\sigma}(X)$ spanned by such subquotients is the Tannakian subcategory of \textit{local systems of geometric origin} (see \cite[\S~7.2]{Jac25} or \cite[\S\S~1.3, 2.3]{JT25}).

Before moving on, let us note that the shifted inverse image functor $f^{\dagger} \colon \MNori_{\sigma}(Y) \to \MNori_{\sigma}(X)$ under a morphism $f \colon X \to Y$ restricts to a monoidal functor
\begin{equation*}
	f^{\dagger} \colon \MNoriLoc_{\sigma}^{\Artin}(Y) \to \MNoriLoc_{\sigma}^{\Artin}(X).
\end{equation*}
To see this, let $e_Y \colon Y' \to Y$ be a finite étale morphism, and form the Cartesian square
\begin{equation*}
	\begin{tikzcd}
		X' \arrow{r}{e_X} \arrow{d}{f'} &
		X \arrow{d}{f} \\
		Y' \arrow{r}{e_Y} &
		Y.
	\end{tikzcd}
\end{equation*}
By proper base-change, the canonical morphism in $\MNoriLoc_{\sigma}(X)$
\begin{equation*}
	f^{\dagger} ({^p \Q}_{Y'/Y}) := f^{\dagger} (e_{Y,*} {^p \Q}_{Y'}) \to e_{X,*} (f')^{\dagger} ({^p \Q}_{Y'}) = e_{X,*} {^p \Q}_{X'} =: {^p \Q}_{X'/X}
\end{equation*}
is an isomorphism.
This implies that $f^{\dagger}$ takes $\langle {^p \Q}_{Y'/Y} \rangle^{\otimes}$ to $\langle {^p \Q}_{X'/X} \rangle$, and the conclusion follows by passing to the colimit over the finite étale covers of $Y$.
In particular, for every point $x \in X^{\sigma}$ we have a commutative diagram of the form
\begin{equation*}
	\begin{tikzcd}
		\Gmot(X,x;\sigma) \arrow{r} \arrow[two heads]{d} &
		\Gmot(Y,f(x);\sigma) \arrow[two heads]{d} \\
		\pi_1^{\et}(X,x) \arrow{r} &
		\pi_1^{\et}(Y,f(y)),
	\end{tikzcd}
\end{equation*}
where the lower horizontal arrow is the usual functoriality homomorphism of étale fundamental groups (see \cite[Exp.~V, \S~7]{SGA1}).
The functoriality of motivic local systems and motivic Galois groups, and its restriction to Artin objects, will be frequently used in the course of the next section.

\section{Remarkable relations among motivic Galois groups}\label{sect:Gmot-wow}

The present section contains two technical results comparing motivic Galois groups arising from different motivic categories:
the first one relates motivic local systems over a smooth variety to motives over its function field,
while the second one describes how they change under suitable extensions of the base field.
Their combination plays a crucial role in the proof of our main results in \Cref{sect:MotLocus,sect:WeightLocus}.

As in the previous section, we work over a field $k$ of characteristic $0$ endowed with a complex embedding $\sigma \colon k \hookrightarrow \C$.

\subsection{Generic motivic local systems}

Let $X$ be a smooth, geometrically connected $k$-variety, with function field $k(X)$.
We are going to introduce a generic variant of the category $\MNoriLoc_{\sigma}(X)$:
it is modelled on the category of \textit{generic local systems} $\twocolim_{U \subset X} \Loc_{\sigma}(U)$, where the colimit is taken over all non-empty Zariski opens $U \subset X$.
Note that the canonical functor
\begin{equation*}
	\twocolim_{U \subset X} \Loc_{\sigma}(U) \to \twocolim_{U \subset X} \Perv_{\sigma}(U)
\end{equation*}
is an equivalence, because every perverse sheaf restricts to a shifted local system over some smaller non-empty Zariski open.
 
\begin{nota}
	We let $\MNoriLoc_{\sigma}(k(X))$ denote the universal abelian factorization of the homological functor
	\begin{equation*}
		\beta_{\sigma,k(X)} \colon \twocolim_{U \subset X} \DAct(U) \xrightarrow{(\beta_{\sigma,U})_U} \twocolim_{U \subset X} \Perv_{\sigma}(U) = \twocolim_{U \subset X} \Loc_{\sigma}(U),
	\end{equation*}
	and we call it the category of \textit{generic motivic local systems} over $X$.
\end{nota}

The terminology is justified by the observation that a generic motivic local system can be viewed as a motivic local system defined over some unspecified Zariski-dense open $U \subset X$.
More precisely:
\begin{lem}\label[lem]{lem:MNori(k(X))=colim}
	The canonical functor
	\begin{equation*}
		\twocolim_{U \subset X} \MNoriLoc_{\sigma}(U) \to \twocolim_{U \subset X} \MNori_{\sigma}(U) \to \MNoriLoc_{\sigma}(k(X))
	\end{equation*}
	is an equivalence. 
\end{lem}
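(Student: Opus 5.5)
The proof will split into two equivalences and then compose them. The second arrow, $\twocolim_{U} \MNori_{\sigma}(U) \to \MNoriLoc_{\sigma}(k(X))$, is the easier one. The first, $\twocolim_U \MNoriLoc_{\sigma}(U) \to \twocolim_U \MNori_{\sigma}(U)$, is handled by a spreading-out argument on a single object.

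For the second arrow, I will invoke the compatibility of universal abelian factorizations with filtered $2$-colimits (\cite[Lem.~5.12]{Ter24UAF}), exactly as in the proofs of \Cref{lem:MNori_colim} and \Cref{lem:MNori(X)_colim}. The system of non-empty Zariski opens $U \subset X$ is filtered, since it is closed under finite intersections by irreducibility of $X$. For each inclusion $V \subset U$, the restriction functors $\DAct(U) \to \DAct(V)$, $\Perv_{\sigma}(U) \to \Perv_{\sigma}(V)$ and $\MNori_{\sigma}(U) \to \MNori_{\sigma}(V)$ are compatible with the Betti realizations and with the factorizations $\beta_{\sigma,U} = \iota_U \circ \pi_{\sigma,U}$. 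This compatibility holds because restriction along an open immersion is $t$-exact for the perverse $t$-structure and commutes with Betti realization, by \Cref{thm:Nori-6ff}. The $2$-colimit of the universal factorizations of $\beta_{\sigma,U}$ is then the universal factorization of $\twocolim_U \beta_{\sigma,U} = \beta_{\sigma,k(X)}$. Hence $\twocolim_U \MNori_{\sigma}(U) \simeq \MNoriLoc_{\sigma}(k(X))$.

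For the first arrow, full faithfulness is immediate. Each $\MNoriLoc_{\sigma}(U) \subset \MNori_{\sigma}(U)$ is a full subcategory, and morphisms in a filtered $2$-colimit of categories are computed as the colimit of the morphism sets. A filtered colimit of injective, hence bijective-onto-image, inclusions of Hom sets stays fully faithful, and restriction preserves motivic local systems, so the functor is well defined. For essential surjectivity, I take $M \in \MNori_{\sigma}(U)$. Its underlying perverse sheaf $\iota_U(M)$ is constructible, so there is a dense open $V \subset U$ on which $\iota_U(M)|_V$ lies in $\Loc_{\sigma}(V)$; this is the generic-smoothness fact already recalled before the notation. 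Since $\iota_V(M|_V) = \iota_U(M)|_V$, the restriction $M|_V$ is a motivic local system over $V$, and it represents the same object of the colimit. I also have to make sure that each $V$ is smooth and geometrically connected, so that $\MNoriLoc_{\sigma}(V)$ makes sense as defined. This is automatic, because $V$ is a dense open of $X$.

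The main obstacle is the first step: justifying that \cite[Lem.~5.12]{Ter24UAF} applies. Concretely, I must check that the target $2$-colimit $\twocolim_U \Perv_{\sigma}(U)$ is an abelian category in which $\beta_{\sigma,k(X)}$ is homological. Both follow because the transition functors are exact and the index system is filtered. The remaining work is bookkeeping of the natural isomorphisms, so that the colimit functors are coherent.
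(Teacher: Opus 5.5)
Your proposal is correct and follows essentially the same route as the paper: the first arrow is an equivalence because every motivic perverse sheaf restricts to a motivic local system on a smaller dense open (cofinality), and the second by compatibility of universal abelian factorizations with filtered $2$-colimits \cite[Lem.~5.12]{Ter24UAF}. Your write-up simply spells out the details (full faithfulness on Hom colimits, smoothness and geometric connectedness of dense opens, exactness of restriction) that the paper leaves implicit.
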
 
\begin{proof}
	Indeed, both arrows are equivalences:
	the first one by cofinality of local systems inside perverse sheaves (as explained above), the second one by compatibility of universal abelian factorizations with filtered $2$-colimits (see \cite[Lem.~5.12]{Ter24UAF}).
\end{proof}

Note that the abelian category $\twocolim_{U \subset X} \MNoriLoc_{\sigma}(U)$ carries a canonical monoidal structure compatible with that of each $\MNoriLoc_{\sigma}(U)$.
Moreover, the colimit $\twocolim_{U \subset X} \MNoriLoc_{\sigma}^{\Artin}(U)$ can be identified with a full monoidal abelian subcategory of the former, stable under subquotients (since this is the case for each $\MNoriLoc_{\sigma}^{\Artin}(U)$ inside $\MNoriLoc_{\sigma}(U)$).
Via the equivalence of \Cref{lem:MNori(k(X))=colim}, the abelian category $\MNori_{\sigma}(k(X))$ inherits a compatible monoidal structure.

\begin{nota}
	We let $\MNoriLoc_{\sigma}^{\Artin}(k(X))$ denote the full subcategory of $\MNoriLoc_{\sigma}(k(X))$ corresponding to $\twocolim_{U \subset X} \MNoriLoc_{\sigma}^{\Artin}(U)$:
	it is a monoidal abelian subcategory, stable under subquotients.
\end{nota}

The following result identifies motivic local systems over $X$ with those generic local systems which extend to the whole of $X$:
\begin{prop}\label[prop]{prop:MNori_generic}
	The monoidal exact functor
	\begin{equation*}
		\eta_X^{\dagger} \colon \MNoriLoc_{\sigma}(X) \to \twocolim_{U \subset X} \MNoriLoc_{\sigma}(U) \xrightarrow{\sim} \MNori_{\sigma}(k(X))
	\end{equation*} 
	is fully faithful, with essential image stable under subquotients; 
	and similarly for its restriction
	\begin{equation*}
		\eta_X^{\dagger} \colon \MNoriLoc_{\sigma}^{\Artin}(X) \to \twocolim_{U \subset X} \MNoriLoc_{\sigma}^{\Artin}(U) \xrightarrow{\sim} \MNori_{\sigma}^{\Artin}(k(X)).
	\end{equation*}
\end{prop}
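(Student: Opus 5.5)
Since the filtered $2$-colimit $\twocolim_{U \subset X} \MNoriLoc_{\sigma}(U)$ computes Hom-sets as colimits of Hom-sets, the statement reduces to the following claim. For every non-empty Zariski open $j \colon U \hookrightarrow X$, the restriction functor
\begin{equation*}
	j^{\dagger} = j^* \colon \MNoriLoc_{\sigma}(X) \to \MNoriLoc_{\sigma}(U)
\end{equation*}
is fully faithful with essential image stable under subquotients, and likewise on Artin objects. Stability under subquotients in the colimit then follows: a subquotient of $\eta_X^{\dagger}(M)$ is represented, over some $U$, by a subquotient of $j^{\dagger}M$, and by the claim it comes from $X$. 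I will prove the claim first on underlying local systems and then lift it to motives using faithfulness and exactness of the forgetful functor $\iota_X$ together with the six-functor formalism of \Cref{thm:Nori-6ff}.

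\textbf{Topological step.} Because $X$ is smooth and connected and $U \subset X$ is dense, the map $\pi_1(U^{\sigma}) \to \pi_1(X^{\sigma})$ is surjective; the complement has real codimension at least $2$. Hence $\Loc_{\sigma}(X) \to \Loc_{\sigma}(U)$ is fully faithful, and any sub-local system of $j^*L$ is stable under the kernel of that map, so it extends uniquely to a sub-local system of $L$. Sheaf-theoretically, for $L \in \Loc_{\sigma}(X)$ placed in perverse degree, $L \simeq j_{!*} j^* L$, and $j_{!*}$ of a sub-object of $j^*L$ is the corresponding sub-object of $L$.

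\textbf{Motivic step.} For full faithfulness, let $M, N \in \MNoriLoc_{\sigma}(X)$. Faithfulness of $j^*$ follows from faithfulness of $\iota$ and the topological step. For fullness, I use the intermediate extension $j_{!*} := \mathrm{Im}({^pH^0} j_! \to {^pH^0} j_*)$ on $\MNori_{\sigma}$, which exists by \Cref{thm:Nori-6ff} and is compatible with $\iota$. The unit and counit maps give natural morphisms that realize to isomorphisms $M \xrightarrow{\sim} j_{!*}j^*M$, so they are isomorphisms, since $\iota$ is faithful and exact and hence conservative. Thus $\mathrm{Hom}(j^*M, j^*N) \to \mathrm{Hom}(j_{!*}j^*M, j_{!*}j^*N) = \mathrm{Hom}(M,N)$ provides an inverse.

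For subquotients, take $P \subset j^*M$ in $\MNoriLoc_{\sigma}(U)$. Then $j_{!*}P \hookrightarrow j_{!*}j^*M \simeq M$, and since $j_{!*}$ preserves monomorphisms, $j_{!*}P$ is a subobject of $M$. It is a motivic local system because its realization is a sub-local system by the topological step, and $j^*j_{!*}P \simeq P$. Quotients follow from this together with exactness of $j^*$.

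\textbf{Artin variant.} The restricted functor is fully faithful by the above. For stability, a subquotient of $j^{\dagger}M$ with $M$ Artin lies in $\MNoriLoc_{\sigma}^{\Artin}(U)$ and, by the previous step, extends to a subquotient of $M$; this extension is Artin because $\MNoriLoc^{\Artin}_{\sigma}(X)$ is stable under subquotients in $\MNoriLoc_{\sigma}(X)$. One must also check that the essential image in $\MNori_{\sigma}^{\Artin}(k(X))$ is the correct one: an object of $\MNoriLoc_{\sigma}(X)$ whose restriction to $U$ is Artin must itself be Artin. I would try to prove this via Zariski–Nagata purity, which says that connected finite étale covers of $U$ whose monodromy factors through $\pi_1(X)$ extend to $X$. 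Then ${^p\Q}_{U'/U} = j^*{^p\Q}_{X'/X}$, and full faithfulness gives $M \in \langle {^p\Q}_{X'/X}\rangle^{\otimes}$ by applying $j_{!*}$.

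\textbf{Main obstacle.} The delicate point is the motivic step: confirming that $j_{!*}$ is well defined on Nori perverse motives and commutes with $\iota$. This should follow from \Cref{thm:Nori-6ff} together with exactness and faithfulness of $\iota$ on hearts.
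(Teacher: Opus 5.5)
Your proof is correct and is essentially the paper's argument: reduce to a single dense open immersion $j\colon U \hookrightarrow X$, use $j_{!*}$ (which commutes with $\iota$ by \Cref{thm:Nori-6ff}) as an inverse to $j^*$ on motivic local systems and on their subobject lattices, and obtain the Artin case from the closure of Artin objects under subquotients. Your extra check, that an object which is Artin over $U$ is Artin over $X$, is not needed for the statement as phrased. If you pursue it anyway, note that the given cover $U'$ need not extend over $X$, so you must first replace it by the quotient cover cut out by the monodromy of $j^*M$ itself.
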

\begin{proof}
	In the first place, we establish the properties of $\eta_X^{\dagger} \colon \MNoriLoc_{\sigma}(X) \to \MNori_{\sigma}(k(X))$.
	For this, it suffices to show that the same properties hold for the inverse image functor $j^* \colon \MNoriLoc_{\sigma}(X) \to \MNoriLoc_{\sigma}(U)$ under a Zariski-dense affine open immersion $j \colon U \hookrightarrow X$.
	This is proved in \cite[Lem.~5.12]{JT25}, but we sketch the argument here for the reader's convenience.
	As a consequence of \cite[Lem.~4.3.2]{BBD82}, the intermediate extension functor
	\begin{equation*}
		j_{!*} \colon \MNori_{\sigma}(U) \to \MNori_{\sigma}(X)
	\end{equation*}
	is fully faithful, and its restriction to $\MNoriLoc_{\sigma}(U)$ provides a left-inverse to $j^* \colon \MNoriLoc_{\sigma}(X) \to \MNoriLoc_{\sigma}(U)$.
	This implies formally that the latter is fully faithful.
	To show that its essential image is stable under subquotients, it suffices to show that $j_{!*}$ provides a two-sided inverse to $j^*$ on subobject lattices of motivic local systems.
	This follows from the fact that $j_{!*}$ is left-inverse to $j^*$ on motivic local systems, while $j^*$ is even left-inverse to $j_{!*}$ on motivic perverse sheaves.
	
	In order to show that the desired properties of $\eta_X^{\dagger} \colon \MNoriLoc_{\sigma}(X) \to \MNori_{\sigma}(k(X))$ pass to its restriction to Artin motivic local systems, it suffices to observe that the same properties hold for the inclusions $\MNoriLoc_{\sigma}^{\Artin}(X) \subset \MNoriLoc_{\sigma}(X)$ and $\MNoriLoc_{\sigma}^{\Artin}(k(X)) \subset \MNoriLoc_{\sigma}(k(X))$ (see \Cref{ex:MNoriLoc_Artin}). 
\end{proof}

With some care, the result can be translated in Tannaka-dual terms:
\begin{cor}\label[cor]{cor:Gmot_generic}
	Keep the notation and assumptions of \Cref{prop:MNori_generic}.
	Let $z \in X^{\sigma}$ be a point which is dense for the Zariski topology of $X$.
	Then, we have a canonical commutative diagram of motivic Galois groups
	\begin{equation*}
		\begin{tikzcd}
			\Gmot(\eta_X^{\dagger} M,z;\sigma) \arrow{r}{\sim} \arrow[two heads]{d} &
			\Gmot(M,z; \sigma) \arrow[two heads]{d} \\
			\Gmot^{\Artin}(\eta_X^{\dagger} M,z; \sigma) \arrow{r}{\sim} &
			\Gmot^{\Artin}(M,z; \sigma),
		\end{tikzcd}
	\end{equation*}
	in which both horizontal arrows are isomorphisms.
\end{cor}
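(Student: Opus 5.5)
The plan is to deduce the statement from \Cref{prop:MNori_generic} and Tannaka duality. The only subtle point is making sense of fibre functors on the generic category $\MNoriLoc_{\sigma}(k(X))$ at the point $z$.

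\emph{Fibre functor at $z$.} Since $z \in X^{\sigma}$ is Zariski-dense in $X$, it lies in $U^{\sigma}$ for every non-empty Zariski open $U \subset X$. Indeed, the complement $X \setminus U$ is a strict closed subvariety, and $z$ cannot lie on it. Hence the fibre functors $x^{\dagger} \circ \iota_{\sigma,U}$ with $x = z$ are defined on every $\MNoriLoc_{\sigma}(U)$. They are compatible with the restriction functors $j^* \colon \MNoriLoc_{\sigma}(U) \to \MNoriLoc_{\sigma}(U')$ for $U' \subset U$: the stalk at $z$ of a local system and of its restriction agree canonically, and monoidally. Passing to the $2$-colimit and using \Cref{lem:MNori(k(X))=colim}, we obtain a monoidal exact faithful functor on $\MNoriLoc_{\sigma}(k(X))$, still called the fibre functor at $z$. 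This makes $\MNoriLoc_{\sigma}(k(X))$ neutral Tannakian, and we define $\Gmot(\eta_X^{\dagger}M,z;\sigma)$ and $\Gmot^{\Artin}(\eta_X^{\dagger}M,z;\sigma)$ via this fibre functor. Here $\langle \eta_X^{\dagger} M\rangle^{\otimes}_{\Artin}$ means the intersection with $\MNoriLoc_{\sigma}^{\Artin}(k(X))$. By construction, the composite of $\eta_X^{\dagger}$ with this fibre functor is canonically isomorphic, as a monoidal functor, to the fibre functor at $z$ on $\MNoriLoc_{\sigma}(X)$.

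\emph{Horizontal isomorphisms.} By \Cref{prop:MNori_generic}, $\eta_X^{\dagger}$ is fully faithful with essential image stable under subquotients. It therefore restricts to an equivalence $\langle M\rangle^{\otimes} \xrightarrow{\sim} \langle \eta_X^{\dagger}M\rangle^{\otimes}$, because the essential image of $\langle M\rangle^{\otimes}$ is a Tannakian subcategory containing $\eta_X^{\dagger}M$. This equivalence is compatible with the fibre functors at $z$, so it induces the upper horizontal isomorphism. For the Artin part, we must check that this equivalence matches $\langle M\rangle^{\otimes}_{\Artin}$ with $\langle \eta_X^{\dagger}M\rangle^{\otimes}_{\Artin}$. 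One inclusion is clear, since $\eta_X^{\dagger}$ preserves Artin objects. For the other, let $N \in \langle M\rangle^{\otimes}$ with $\eta_X^{\dagger}N$ Artin in $\MNoriLoc_{\sigma}(k(X))$. By the second part of \Cref{prop:MNori_generic}, $\eta_X^{\dagger}N \cong \eta_X^{\dagger}N'$ for some $N' \in \MNoriLoc_{\sigma}^{\Artin}(X)$. Full faithfulness then gives $N \cong N'$, so $N$ is Artin. This yields the lower horizontal isomorphism.

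\emph{Commutativity.} The vertical surjections are dual to the inclusions of the Artin subcategories, and these inclusions are intertwined by the equivalence above, so the square commutes. The main point requiring care is the first step: checking that the fibre functor at $z$ is well defined and monoidal on the $2$-colimit. This is exactly where the Zariski-density of $z$ is used.
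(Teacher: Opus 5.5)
\paragraph{Verdict.} Your route is the paper's, but the justification you give for the Artin step does not follow from what you cite. The route is: define the fibre functor at $z$ on the $2$-colimit using Zariski-density, obtain $\langle M\rangle^{\otimes}\simeq\langle\eta_X^{\dagger}M\rangle^{\otimes}$ as neutralized Tannakian categories from \Cref{prop:MNori_generic}, and restrict to Artin parts. The fibre-functor step, the upper isomorphism and the commutativity are fine. The paper's proof only asserts the Artin step.

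\paragraph{The gap.} The second part of \Cref{prop:MNori_generic} says only that $\eta_X^{\dagger}$ is fully faithful on $\MNoriLoc_{\sigma}^{\Artin}(X)$ with essential image stable under subquotients. It says nothing about which Artin generic objects are hit. Your sentence ``$\eta_X^{\dagger}N\cong\eta_X^{\dagger}N'$ for some Artin $N'$'' needs a stronger fact: an object $N$ of $\MNoriLoc_{\sigma}(X)$ that becomes Artin on a dense open $j\colon U\hookrightarrow X$ is already Artin on $X$. This is a genuine purity statement. A priori $j^*N$ is only trivialized by a finite étale cover of $U$, which need not extend over $X$. Without it, full faithfulness and subquotient-stability only show that $\Gmot^{\Artin}(\eta_X^{\dagger}M,z;\sigma)\to\Gmot^{\Artin}(M,z;\sigma)$ is surjective, not that it is injective.

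\paragraph{How to close it.} You must use that $\iota_{\sigma,X}(N)$ is a local system on all of $X^{\sigma}$.
\begin{enumerate}
\item Choose $U$ and a connected Galois cover $U'\to U$ with group $G$ such that $j^*N\in\langle {}^p\Q_{U'/U}\rangle^{\otimes}\simeq\mathrm{Rep}_{\Q}(G)$ (\Cref{ex:MNoriLoc_Artin}). Let $H\subset G$ be the kernel of the action of $G$ on $z^{\dagger}N$.
\item Then $j^*N\in\langle{}^p\Q_{(U'/H)/U}\rangle^{\otimes}$. The monodromy of $\iota_{\sigma,U}(j^*N)$ is the composite of $\pi_1(U^{\sigma},z)\to G/H$, induced by the cover, with the injection $G/H\hookrightarrow\mathrm{GL}(z^{\dagger}N)$.
\item Since $\iota_{\sigma,X}(N)$ extends over $X^{\sigma}$, this monodromy kills $\ker\bigl(\pi_1(U^{\sigma},z)\to\pi_1(X^{\sigma},z)\bigr)$. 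By injectivity of $G/H\hookrightarrow\mathrm{GL}(z^{\dagger}N)$, so does $\pi_1(U^{\sigma},z)\to G/H$.
\item Because $U$ and $X$ are geometrically connected, $\ker\bigl(\pi_1^{\et}(U)\to\pi_1^{\et}(X)\bigr)$ lies in $\pi_1^{\et}(U_{\overline{k}})\cong\widehat{\pi_1(U^{\sigma})}$. There it is the closed normal subgroup generated by the image of the topological kernel. Hence it is contained in the open normal subgroup defining $U'/H$.
\item As $\pi_1^{\et}(U)\to\pi_1^{\et}(X)$ is surjective, it follows that $U'/H\cong X''\times_X U$ for a connected finite étale cover $X''\to X$.
\item By base change, $\eta_X^{\dagger}N\in\langle\eta_X^{\dagger}\,{}^p\Q_{X''/X}\rangle^{\otimes}$. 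Full faithfulness and subquotient-stability of $\eta_X^{\dagger}$ then give $N\in\langle{}^p\Q_{X''/X}\rangle^{\otimes}$, so $N$ is Artin.
\end{enumerate}
With this inserted, your argument proves the statement.
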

\begin{proof}
	The Zariski-denseness assumption means that the point $z$ lies in $U^{\sigma}$ for each Zariski open $U \subset X$, and therefore defines a fibre functor on $\MNoriLoc_{\sigma}(k(X))$.
	Therefore, \Cref{prop:MNori_generic} yields an equivalence of neutralized Tannakian categories $\langle M \rangle^{\otimes} \xrightarrow{\sim} \langle \eta_X^{\dagger} M \rangle^{\otimes}$, which restricts to an analogous equivalence $\langle M \rangle^{\otimes}_{\Artin} \xrightarrow{\sim} \langle \eta_X^{\dagger} M \rangle^{\otimes}_{\Artin}$.
\end{proof}

\begin{rem}
	Of course, the existence of a Zariski-dense point $z \in X^{\sigma}$ depends on $k$:
	for example, when $k = \C$, no such point exists as soon as $\dim(X) \geq 1$
	(see \Cref{constr:Sigma-z} below for a more thorough discussion). 
	In any case, the monoidal abelian category $\MNoriLoc_{\sigma}(k(X))$ is always neutral Tannakian over $\Q$:
	a fibre functor can be obtained by composing the forgetful functor
	\begin{equation*}
		\iota_{\sigma,k(X)} \colon \MNoriLoc_{\sigma}(k(X)) \to \twocolim_{U \subset X} \Loc_{\sigma}(U)
	\end{equation*}
	with a fibre functor for $\twocolim_{U \subset X} \Loc_{\sigma}(U)$, which in turn can be constructed as in \cite[\S~4.4]{BT25}.
	However, we do not need to pursue this approach in the present paper.
\end{rem}

%\begin{cor}\label{cor:Gmot_generic}
%	For every $M \in \MNoriLoc(X)$, the canonical closed immersion
%	\begin{equation*}
%		\Gmot(\eta_X^{\dagger} M;\xi) \hookrightarrow \Gmot(M,x;\sigma)
%	\end{equation*}
%	is an isomorphism, and the induced homomorphism
%	\begin{equation*}
%		\Gmot^{\Artin}(\eta_X^{\dagger} M;\xi) \to \Gmot^{\Artin}(M,x;\sigma)
%	\end{equation*}
%	is an isomorphism as well.
%\end{cor}

\subsection{Nori motives over function fields}

As before, let $X$ be a smooth, geometrically connected $k$-variety.
We want to make sense of $\MNoriLoc_{\sigma}(k(X))$ as a category of Nori motives over the function field $k(X)$.
This is the content of our first auxiliary result:
\begin{prop}\label[prop]{prop:MNori_sigma=MNori_Sigma}
	Assume that there exists a complex embedding $\Sigma \colon k(X) \hookrightarrow \C$ extending $\sigma$.
	Then, there is a canonical monoidal equivalence
	\begin{equation*}
		\MNoriLoc_{\sigma}(k(X)) = \MNori_{\Sigma}(k(X)),
	\end{equation*} 
	which restricts to a monoidal equivalence
	\begin{equation*}
		\MNoriLoc^{\Artin}_{\sigma}(k(X)) = \MNori^{\Artin}_{\Sigma}(k(X)).
	\end{equation*}
\end{prop}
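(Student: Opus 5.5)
Both categories are universal abelian factorizations, so the natural route is to compare their source functors. By \Cref{lem:MNori(X)_colim}, applied to the filtered family of finitely generated $k$-subalgebras of $k(X)$, we can write $\MNori_{\Sigma}(k(X))$ as a $2$-colimit. More directly, we can write $\DAct(k(X))$ as $\twocolim_{U \subset X} \DAct(U)$ via continuity of Voevodsky motives (\cite[Cor.~3.22]{Ayo14Et}). Here the $U$ are the non-empty affine opens of $X$, with $k(X) = \twocolim_U \mathcal{O}(U)$, and the transition functors and the comparison are given by pullback along $\eta_U \colon \Spec(k(X)) \to U$.

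The embedding $\Sigma$ defines a point $z := \Sigma^{*}$ of $X^{\sigma}$. Since $\Sigma$ is injective on $k(X)$, the point $z$ is Zariski-dense in $X$, so it lies in $U^{\sigma}$ for every $U$. The plan is to show that, under this identification of source categories, the homological functor $\beta_{\Sigma,k(X)} = H^0 \circ \Bti^*_{\Sigma,k(X)}$ agrees with the composite of $\beta_{\sigma,k(X)}$ and the exact, faithful fibre functor
\begin{equation*}
z^{\dagger} \colon \twocolim_{U} \Loc_{\sigma}(U) \to \vect_{\Q}.
\end{equation*}
The required compatibility of Betti realizations is that, for $A \in \DAct(U)$, one has a natural isomorphism $\Bti^*_{\Sigma,k(X)}(\eta_U^* A) \simeq z^* \Bti^*_{\sigma,U}(A)$. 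This is the compatibility of Betti realization with pullback along the morphism of schemes $\Spec(\C) \xrightarrow{\Sigma} \Spec(k(X)) \to U$, and it holds by the construction in \cite{Ayo10}. Once $A$ is chosen so that its perverse cohomology on $U$ is a shifted local system, which is possible after shrinking $U$, taking $H^0$ commutes with $z^{\dagger}$ up to the shift by $\dim(X)$.

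The key abstract step, which I expect to be the main obstacle, is the following. Composing a homological functor with a faithful exact functor does not in general preserve the universal abelian factorization. What it does preserve is this: if $\beta' = F \circ \beta$ with $F$ faithful and exact, then the universal factorization of $\beta'$ is canonically equivalent to that of $\beta$. This follows from the description of universal abelian factorizations in \cite{Ter24UAF}, where the category depends only on the target category through faithful exact functors, since $\mathrm{Hom}$ and kernels are computed in the target. I would cite the corresponding lemma there, or reprove it via Nori's description as comodules over the coalgebra of endomorphisms. It gives an equivalence
\begin{equation*}
\MNoriLoc_{\sigma}(k(X)) \simeq \MNori_{\Sigma}(k(X))
\end{equation*}
compatible with the forgetful functors. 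We also need $z^{\dagger}$ to be faithful on the colimit category; this holds because it is faithful on each $\Loc_{\sigma}(U)$, as $U^{\sigma}$ is connected.

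For monoidality, both tensor structures are induced from the tensor product on $\DAct$, and $z^{\dagger}$ and $\eta_U^*$ are monoidal on $\DAct$ and on local systems. The induced equivalence therefore respects the tensor structures, which can be checked on the image of $\pi$ by universality. Finally, for the Artin parts, $\eta_U^*$ sends ${^p\Q}_{U'/U}$ to the Artin motive $\Q_{k(U')/k(X)}$ attached to the function field of a connected component of $U'$. Conversely, every finite extension of $k(X)$ spreads out to a finite étale cover of some $U$. Hence the generators correspond, and the tensor subcategories they generate match, which gives the restricted equivalence.
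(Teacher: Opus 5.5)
Your proposal is correct and follows essentially the same route as the paper. You identify $\twocolim_U \DAct(U)$ with $\DAct(k(X))$, use the Zariski-dense point $z_\Sigma$ to get a commutative square of realizations (up to the shift by $\dim(X)$), and deduce from faithfulness of the exact functor $z_\Sigma^\dagger$ (equivalently, its conservativity) that the two universal abelian factorizations coincide; you then match the Artin generators via generic fibres of finite étale covers and spreading out.

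The one step you state too loosely is monoidality. Since $\pi$ does not in general turn $\otimes$ on $\DAct$ into $\otimes$ on motives, you should, as the paper does, rerun the comparison on the full additive subcategories of objects whose Betti realizations already lie in $\Loc_\sigma(U)$ (resp.\ $\vect_\Q$). You then still need to check compatibility with external tensor products and with pullback along diagonals.
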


The proof strategy is based on the following geometric idea, the relevance of which in the motivic setting we learned from the proof of \cite[Prop.~2.20]{Ayo14H2}:
\begin{constr}\label[constr]{constr:Sigma-z}
	Keep the notation and assumptions of \Cref{prop:MNori_sigma=MNori_Sigma}.
	The complex embedding $\Sigma \colon k(X) \hookrightarrow \C$ corresponds canonically to a point $z_{\Sigma} \in X^{\sigma} = (X \times_{k,\sigma} \C)(\C)$:
	namely, the morphism $\Spec(\C) \to X \times_{k,\sigma} \C$ induced by the sheaf monomorphism
	\begin{equation*}
		\mathcal{O}_X \subset k(X) \xrightarrow{\Sigma} \C. 
	\end{equation*}
	By construction, the point $z_{\Sigma}$ does not lie on any subset of the form $Y^{\sigma}$ with $Y$ a strict closed algebraic subvariety of $X$:
	in other words, it is dense for the Zariski topology of $X$.
	
	In fact, the rule $\Sigma \mapsto z_{\Sigma}$ defines a natural bijection between complex embeddings of $k(X)$ extending $\sigma$ and points of $X^{\sigma}$ which are dense for the Zariski topology of $X$:
	the inverse rule $z \mapsto \Sigma_z$ associates to a Zariski-dense point $z \in X^{\sigma}$ the complex embedding $\Sigma_z \colon k(X) \hookrightarrow \C$ extending the sheaf monomorphism
	\begin{equation*}
		\mathcal{O}_X \subset \mathcal{O}_{X \times_{k,\sigma} \C} \xrightarrow{z^*} \C. 
	\end{equation*}
	More generally, for every irreducible closed subvariety $S \subset X$, complex embeddings of $k(S)$ extending $\sigma$ correspond to points $t \in X^{\sigma}$ with Zariski-closure $S$.
	\qed 
\end{constr}

\begin{proof}[Proof of \Cref{prop:MNori_sigma=MNori_Sigma}]
	In the first place, we explain how to identify $\MNoriLoc_{\sigma}(k(X))$ and $\MNori_{\Sigma}(k(X))$ as abelian categories.
	Since both of them are defined as universal abelian factorizations, it suffices to show that the two exact functors on the abelian hull
	\begin{equation*}
		\beta_{\sigma,k(X)}^+ \colon \AbHull(\DAct(k(X))) \to \twocolim_{U \subset X} \Loc_{\sigma}(U), \qquad \beta_{\Sigma,k(X)}^+ \colon \AbHull(\DAct(k(X))) \to \vect_{\Q}
	\end{equation*}
	have the same kernel.
	This is proved in \cite[Prop.~1.14]{Ter24Nori}, but we sketch the argument here for the reader's convenience.
	Let $z_{\Sigma}$ denote the Zariski-dense point defined by $\Sigma$ (as in \Cref{constr:Sigma-z}). 
	Taking the stalk at $z_{\Sigma}$ defines a compatible system of fibre functors
	\begin{equation*}
		z_{\Sigma}^{\dagger} \colon \Loc_{\sigma}(U) \to \vect_{\Q}
	\end{equation*} 
	for all Zariski-dense opens $U \subset X$, whence an analogous fibre functor on $\twocolim_{U \subset X} \Loc_{\sigma}(U)$. 
	Unwinding the definitions, one checks that the diagram
	\begin{equation*}
		\begin{tikzcd}
			\twocolim_{U \subset X} \DAct(U) \arrow{rr}{\sim} \arrow{d}{(\beta_{\sigma,U})_U} &&
			\DAct(k(X)) \arrow{rr}{[-\dim(X)]} &&
			\DAct(k(X)) \arrow{d}{\beta_{\Sigma,k(X)}} \\
			\twocolim_{U \subset X} \Perv_{\sigma}(U) &&
			\twocolim_{U \subset X} \Loc_{\sigma}(U) \arrow[equal]{ll} \arrow{rr}{z_{\Sigma}^{\dagger}} &&
			\vect_{\Q}
		\end{tikzcd}
	\end{equation*}
	commutes.
	The sought-after equality of kernels follows formally from the conservativity of $z_{\Sigma}^{\dagger}$.
	
	In order to promote the equivalence of abelian categories $\MNoriLoc_{\sigma}(k(X)) = \MNori_{\Sigma}(k(X))$ obtained in this way to a monoidal equivalence, it suffices to repeat the same argument after replacing $\DAct(U)$ (resp. $\DAct(k(X))$) by its full additive subcategory $(\Bti_{\sigma,U}^*)^{-1}(\Perv_{\sigma}(U))$ (resp. $(\Bti_{\Sigma,k(X)}^*)^{-1}(\vect_{\Q})$).
	Indeed, in view of the way the monoidal structure on Nori motivic perverse sheaves is constructed (see \cite[\S~2]{Ter24Nori}), one easily sees that our equivalence is compatible with the external tensor product.
	Moreover, it is also compatible with inverse images under diagonal immersions, in view of the way these are defined (see \cite[\S~4]{IM24}).
	This implies its compatibility with the internal tensor product.
	
	To deduce the equivalence between the subcategories of Artin objects, it only remains to show that the essential image of $\MNoriLoc_{\sigma}^{\Artin}(k(X))$ inside $\MNori_{\Sigma}(k(X))$ coincides with $\MNori_{\Sigma}^{\Artin}(k(X))$.
	To this end, fix a Zariski-dense open $U \subset X$ together with a finite étale cover $e \colon U' \to U$, and form the Cartesian square of schemes
	\begin{equation*}
		\begin{tikzcd}
			V \arrow{r} \arrow{d}{\tilde{e}} &
			U' \arrow{d}{e} \\
			\Spec(k(X)) \arrow{r}{\eta_U} &
			U.
		\end{tikzcd}
	\end{equation*}
	In view of the canonical isomorphism in $\DAct(k(X))$
	\begin{equation*}
		\eta_U^* e_* \unit_{U'} = \tilde{e}_* \unit_{V},
	\end{equation*}
	the object $\eta_U^{\dagger}({^p \Q}_{U'/U}) \in \MNoriLoc_{\sigma}(k(X))$ corresponds to the object ${^p \Q}_{V/k(X)} \in \MNori_{\Sigma}(k(X))$ under the equivalence of the previous paragraph.
	This implies that the essential image of $\MNoriLoc_{\sigma}^{\Artin}(k(X))$ in $\MNori_{\Sigma}(k(X))$ is contained inside $\MNori_{\Sigma}^{\Artin}(k(X))$.
	Conversely, since every finite étale $k(X)$-algebra is of the form $\mathcal{O}(V)$ for $V$ as above, for a suitable Zariski-dense open $U \subset X$ and a suitable finite étale cover $e \colon U' \to U$, the whole of $\MNori_{\Sigma}^{\Artin}(k(X))$ is contained in the essential image.
\end{proof}

%\begin{rem}
%	There is an alternative way to prove \Cref{prop:MNori_sigma=MNori_Sigma}, by comparing both categories in the statement to the category of Nori motives over $k(X)$ defined via the absolute $\ell$-adic realization (see \cite[Prop. 6.11]{IM24}).
%\end{rem}

We deduce our first technical result about motivic Galois groups:
\begin{cor}\label[cor]{cor:Gmot_sigma=Sigma}
	Keep the notation and assumptions of \Cref{prop:MNori_sigma=MNori_Sigma}.
	Given $M \in \MNoriLoc_{\sigma}(X)$, let $N \in \MNori_{\Sigma}(k(X))$ denote the object corresponding to $\eta_X^{\dagger} M \in \MNoriLoc_{\sigma}(k(X))$.
	Then, there is a canonical commutative diagram of motivic Galois groups
	\begin{equation*}
		\begin{tikzcd}
			\Gmot(N; \Sigma) \arrow{r}{\sim} \arrow[two heads]{d} &
			\Gmot(M,z_{\Sigma}; \sigma) \arrow[two heads]{d} \\
			\Gmot^{\Artin}(N; \Sigma) \arrow{r}{\sim} &
			\Gmot^{\Artin}(M,z_{\Sigma}; \sigma),
		\end{tikzcd}
	\end{equation*}
	in which both horizontal arrows are isomorphisms.
\end{cor}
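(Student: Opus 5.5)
The plan is to combine \Cref{cor:Gmot_generic} with the monoidal equivalence of \Cref{prop:MNori_sigma=MNori_Sigma}, being careful about fibre functors. By \Cref{constr:Sigma-z}, the point $z_{\Sigma} \in X^{\sigma}$ is dense for the Zariski topology of $X$. So \Cref{cor:Gmot_generic} applies with $z = z_{\Sigma}$. It gives a commutative square identifying $\Gmot(M,z_{\Sigma};\sigma)$ with $\Gmot(\eta_X^{\dagger} M, z_{\Sigma};\sigma)$, the Tannaka dual of $\langle \eta_X^{\dagger} M\rangle^{\otimes} \subset \MNoriLoc_{\sigma}(k(X))$ at the fibre functor induced by $z_{\Sigma}^{\dagger}$. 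It also identifies the corresponding Artin quotients. It therefore remains to produce a second commutative square identifying these groups with $\Gmot(N;\Sigma)$ and $\Gmot^{\Artin}(N;\Sigma)$, and then to paste the two squares.

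For the second square, I would use the monoidal equivalence $\MNoriLoc_{\sigma}(k(X)) = \MNori_{\Sigma}(k(X))$ of \Cref{prop:MNori_sigma=MNori_Sigma}. Since it is a monoidal equivalence, it restricts to a monoidal equivalence $\langle \eta_X^{\dagger} M\rangle^{\otimes} \xrightarrow{\sim} \langle N \rangle^{\otimes}$. Since it also restricts to an equivalence on Artin subcategories, it matches $\langle \eta_X^{\dagger} M\rangle^{\otimes} \cap \MNoriLoc^{\Artin}_{\sigma}(k(X))$ with $\langle N\rangle^{\otimes}_{\Artin}$. To pass to Tannaka duals, one needs the equivalence to intertwine the fibre functors. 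In other words, the composite of $\iota_{\Sigma,k(X)}$ with the equivalence should be monoidally isomorphic to $z_{\Sigma}^{\dagger} \circ \iota_{\sigma,k(X)}$.

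I expect this compatibility of fibre functors to be the main obstacle. I would deduce it from the commutative diagram in the proof of \Cref{prop:MNori_sigma=MNori_Sigma}. That diagram says that the two homological functors on $\twocolim_U \DAct(U) \simeq \DAct(k(X))$ agree after composing with $z_{\Sigma}^{\dagger}$, up to the shift by $\dim(X)$. This is absorbed by the shifted conventions $x^{\dagger} = x^*[-\dim X]$, using the Betti comparison between $\Bti_{\Sigma,k(X)}^*$ and the stalk at $z_{\Sigma}$ of $\Bti_{\sigma,U}^*$.

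By the universal property of the universal abelian factorization, a natural isomorphism between the induced exact functors on the abelian hull descends to the factorizations. This yields the required isomorphism of exact functors. Its monoidality should follow as in the last part of that proof: rerun the argument on the additive subcategories $(\Bti^*)^{-1}(\Perv)$ and $(\Bti^*)^{-1}(\vect_{\Q})$, where the tensor structures are constructed, checking compatibility with external products and with pullback along diagonals.

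Once this is established, Tannaka duality gives isomorphisms $\Gmot(N;\Sigma) \xrightarrow{\sim} \Gmot(\eta_X^{\dagger}M,z_{\Sigma};\sigma)$ and the analogous isomorphism of Artin quotients. They are compatible with the surjections, because these surjections are dual to inclusions of Tannakian subcategories that correspond under the equivalence. Composing with the square of \Cref{cor:Gmot_generic} gives the claimed diagram. The canonicity of all identifications follows from that of the equivalences used.
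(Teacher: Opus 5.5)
Your proposal is correct and is essentially the paper's proof. The paper likewise uses that the equivalence of \Cref{prop:MNori_sigma=MNori_Sigma} intertwines $\iota_{\Sigma,k(X)}$ with $z_{\Sigma}^{\dagger} \circ \iota_{\sigma,k(X)}$ (asserted there simply ``by construction'') and restricts to Artin objects, and combines this implicitly with the identification $\langle M \rangle^{\otimes} \simeq \langle \eta_X^{\dagger} M \rangle^{\otimes}$; you just make the fibre-functor compatibility and the pasting with \Cref{cor:Gmot_generic} explicit.
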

\begin{proof}
	By construction, the equivalence of \Cref{prop:MNori_sigma=MNori_Sigma} renders the diagram of monoidal functors
	\begin{equation*}
		\begin{tikzcd}
			\MNoriLoc_{\sigma}(k(X)) \arrow[equal]{rr} \arrow{d}{\iota_{\sigma,k(X)}} &&
			\MNori_{\Sigma}(k(X)) \arrow{d}{\iota_{\Sigma,k(X)}} \\
			\twocolim_{U \subset X} \Loc_{\sigma}(U) \arrow{rr}{z_{\Sigma}^{\dagger}} &&
			\vect_{\Q}
		\end{tikzcd}
	\end{equation*}
	commutative.
	Therefore, it determines an equivalence of neutralized Tannakian categories $\langle M \rangle^{\otimes} \xrightarrow{\sim} \langle N \rangle^{\otimes}$, which restricts to an analogous equivalence $\langle M \rangle^{\otimes}_{\Artin} \xrightarrow{\sim} \langle N \rangle^{\otimes}_{\Artin}$.
\end{proof}

\subsection{Topological invariance}\label{subsect:top-inv}

Let now $k'/k$ be a field extension, and suppose that we are given a complex embedding $\sigma' \colon k' \hookrightarrow \C$ extending $\sigma$.
Fix a smooth, geometrically connected $k$-variety $X$, and set $X' := X \times_k k'$.
Our next goal is to compare the categories $\MNoriLoc_{\sigma}(X)$ and $\MNoriLoc_{\sigma'}(X')$.
The starting point is the canonical isomorphism of complex varieties
\begin{equation*}
	X \times_{k,\sigma} \C = (X \times_k k') \times_{k',\sigma'} \C = X' \times_{k',\sigma'} \C.
\end{equation*}
The induced identification of complex-analytic spaces $X^{\sigma} = (X')^{\sigma'}$ allows us to regard $D^b_{\sigma}(X)$ as a full stable sub-$\infty$-category of $D^b_{\sigma'}(X')$.
As $X$ varies, these inclusion functors commute with the six operations, which implies that they are compatible with the perverse $t$-structures.
We thus get a fully faithful exact functor
\begin{equation*}
	\Perv_{\sigma}(X) \to \Perv_{\sigma'}(X'),
\end{equation*} 
which restricts to a monoidal equivalence
\begin{equation*}
	\Loc_{\sigma}(X) \xrightarrow{\sim} \Loc_{\sigma'}(X').
\end{equation*}
For every point $x \in X^{\sigma}$, corresponding to $x' \in (X')^{\sigma'}$, we get an identification of Tannaka-dual groups
\begin{equation*}
	\pi_1^{\alg}((X')^{\sigma'},x') \xrightarrow{\sim} \pi_1^{\alg}(X^{\sigma},x).
\end{equation*}
We want to study the analogous picture in the motivic setting.
In view of the definition of the Betti realization functors, the diagram
\begin{equation*}
	\begin{tikzcd}
		\DAct(X) \arrow{rr} \arrow{d}{\beta_{\sigma,X}} &&
		\DAct(X') \arrow{d}{\beta_{\sigma',X'}} \\
		\Perv_{\sigma}(X) \arrow{rr} &&
		\Perv_{\sigma'}(X')
	\end{tikzcd}
\end{equation*}
commutes up to canonical natural isomorphism (for example, see \cite[Lem.~1.6]{JT25}).
This determines a canonical base-change functor
\begin{equation*}
	\MNori_{\sigma}(X) \to \MNori_{\sigma'}(X')
\end{equation*}
compatible with the forgetful functor as in \eqref{dia:MNori_k'/k}.
By restriction to motivic local systems, we get the commutative diagram of monoidal exact functors
\begin{equation*}
	\begin{tikzcd}
		\MNoriLoc_{\sigma}(X) \arrow{rr} \arrow{d}{\iota_{\sigma,X}} &&
		\MNoriLoc_{\sigma'}(X') \arrow{d}{\iota_{\sigma',X'}} \\
		\Loc_{\sigma}(X) \arrow{rr}{\sim} &&
		\Loc_{\sigma'}(X'),
	\end{tikzcd}
\end{equation*}
which is compatible with the fibre functors at any point $x \in X^{\sigma}$.
Note that the motivic base-change functor is necessarily faithful.
However, in general it might fail to be full, already when $X = \Spec(k)$.

\begin{ex}\label[ex]{ex:non-fullness}
	If $k'/k$ is a non-trivial finite extension, the Artin motive $\Q_{k'/k} \in \MNori_{\sigma}(k)$ discussed in \Cref{ex:Artin_k} is not trivial, but its image in $\MNori_{\sigma'}(k')$ becomes trivial (by construction).
	\qed
\end{ex}

In fact, the only possible cause for the lack of fullness of the base-change functor is precisely the presence of non-trivial Artin motives over $k$ which become trivial over $k'$ --
that is, of non-trivial finite extensions of $k$ inside $k'$.
This is explained in our second auxiliary result: 

\begin{prop}\label[prop]{prop:MNoriLoc_inv}
	Suppose that the field extension $k'/k$ is geometrically integral.
	Then, the monoidal functor
	\begin{equation*}
		\MNoriLoc_{\sigma}(X) \to \MNoriLoc_{\sigma'}(X')
	\end{equation*}
	is fully faithful, with essential image stable under subquotients;
	and similarly for its restriction
	\begin{equation*}
		\MNoriLoc_{\sigma}^{\Artin}(X) \to \MNoriLoc_{\sigma'}^{\Artin}(X').
	\end{equation*}
\end{prop}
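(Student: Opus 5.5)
Via Tannaka duality, it suffices to show that for a base-point $x \in X^{\sigma}$, with corresponding $x' \in (X')^{\sigma'}$, the induced homomorphism $\Gmot(X',x';\sigma') \to \Gmot(X,x;\sigma)$ is surjective (\cite[Prop.~2.21(a)]{DM82}). Equivalently, for each $M \in \MNoriLoc_{\sigma}(X)$, the closed immersion $\Gmot(M',x';\sigma') \hookrightarrow \Gmot(M,x;\sigma)$ must be an isomorphism, where $M'$ is the base change of $M$. This closed immersion exists because every object of $\langle M' \rangle^{\otimes}$ is a subquotient of a base change. The plan is to reduce first to the case $X = \Spec(k)$ with $k$ replaced by a function field, and then to use the exact sequences of \Cref{sect:MNori}.

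\emph{Reduction to fields.} By \Cref{lem:MNori(X)_colim}, and since a filtered union of fully faithful functors with essential image stable under subquotients has the same properties, we may write $k'$ as the filtered union of its finitely generated $k$-subalgebras' fraction fields. Each of these is the function field $k(T)$ of a smooth geometrically integral $k$-variety $T$; geometric integrality comes from the hypothesis on $k'/k$. So we may assume $k' = k(T)$. Now use \Cref{prop:MNori_generic} on $X$ and on $X \times_k T$, together with \Cref{prop:MNori_sigma=MNori_Sigma}, to replace motivic local systems over $X$ by Nori motives over $K := k(X)$, embedded via $\Sigma$ corresponding to a Zariski-dense point. The point $\Sigma$ exists after enlarging: $x$ can be moved within the same fibre functor by path isomorphisms, and $k(X)$ has transcendence degree countable over $k$, though this needs care. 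Then $X'$ corresponds to the field $K' = k'(X) = K(T_K)$. Thus everything reduces to the case of Nori motives over fields: for $K'/K$ a regular extension, we need $\MNori_{\Sigma}(K) \to \MNori_{\Sigma'}(K')$ fully faithful with essential image stable under subquotients. The Artin statement transfers by the same argument, since the equivalences above respect Artin subcategories.

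\emph{Field case.} Consider $N \in \MNori_{\Sigma}(K)$ with image $N'$ over $K'$. Because $K'/K$ is regular, $\overline{K}$ and $K'$ are linearly disjoint, so $\Gal(\overline{K'}/K') \to \Gal(\overline{K}/K)$ is surjective. Hence the Artin quotients match: $\Gmot^{\Artin}(N';\Sigma') \to \Gmot^{\Artin}(N;\Sigma)$ is surjective, and in fact bijective, since the finite Galois extension defining $\Gmot^{\Artin}(N)$ stays a field over $K'$. By \Cref{cor:ses_Gmot-Artin} applied over $K$ and over $K'$, it then suffices to show $\Gmot(\overline{N}';\overline{\Sigma'}) = \Gmot(\overline{N};\overline{\Sigma})$. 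That is, we need invariance of motivic Galois groups under an extension of algebraically closed fields $\overline{K} \subset \overline{K'}$. By \Cref{lem:MNori_colim}, this becomes a statement about $\overline{K} \subset \overline{K}(T)$ for a smooth $\overline{K}$-variety $T$. Here we apply \Cref{thm:fund_ses} to $T$ over $\overline{K}$ at a $\overline{K}$-rational point $t$, where the sequence is split. The section $t^{\dagger}$ together with \Cref{lem:a_X^*-fullyfaith} shows that motives over $\overline{K}$ pulled back to $T$ have Galois group equal to the image of $\Gmot(T,t)$. Combined with \Cref{cor:Gmot_sigma=Sigma} and specialization of the fibre functor from the generic point to $t$ along a path, this identifies $\Gmot(\overline{N}_{\overline{K}(T)})$ with $\Gmot(a_T^{\dagger}\overline{N},t) = \Gmot(\overline{N})$.

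\emph{Main obstacle.} The hardest step is this last comparison for algebraically closed fields. In particular, we must make sure that the pullback $a_T^{\dagger}\overline{N}$ computed at a Zariski-dense point has the same group as at the rational point $t$ (path isomorphisms handle this), and that passing to the generic fibre does not enlarge the group; the latter is \Cref{cor:Gmot_generic}. A secondary technical point is the existence of compatible complex embeddings $\Sigma$ in the reduction step. I would handle it by working throughout with Zariski-dense points of $X^{\sigma}$ over countable fields and using \Cref{lem:MNori(X)_colim} to arrange that $k$ is countable.
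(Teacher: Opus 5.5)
Your outline is sound, and its decisive step is precisely the paper's key step. That step writes the base change from a field $F$ to the function field $F(T)$ of a smooth, geometrically connected $F$-variety $T$ as $\eta_T^{\dagger} \circ a_T^{\dagger}$, passing through $\MNoriLoc(T)$ and generic motivic local systems, and then invokes \Cref{lem:a_X^*-fullyfaith}, \Cref{prop:MNori_generic} and \Cref{prop:MNori_sigma=MNori_Sigma}. What differs is everything around it.

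The paper applies this factorization directly to $k'/k$, once $k'$ is finitely generated. Regularity of $k'/k$ is exactly what makes a smooth connected model $S$ of $k'$ geometrically connected, and that is all \Cref{lem:a_X^*-fullyfaith} requires. Hence your Galois-theoretic layer is unnecessary: the surjectivity of $\Gal(\overline{K'}/K') \to \Gal(\overline{K}/K)$, the two sequences of \Cref{cor:ses_Gmot-Artin}, and the passage to $\overline{K} \subset \overline{K'}$ can all be dropped, because the same factorization works verbatim for $K \subset K' = K(T_K)$.

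Within your route the diagram chase is fine, since it only uses surjectivity on Artin quotients. However, the reason you give for bijectivity there merely re-proves surjectivity; bijectivity only follows a posteriori, from the algebraically closed case. Likewise, in the algebraically closed step you need neither \Cref{thm:fund_ses} nor a rational point, since \Cref{lem:a_X^*-fullyfaith} gives $\Gmot(a_T^{\dagger}\overline{N},t) \cong \Gmot(\overline{N})$ at any base point.

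Finally, the paper reduces from $X$ to $\Spec(k)$ without passing to $k(X)$. It compares the exact sequences $\pi_1^{\alg} \to \Gmot(X,x;\sigma) \to \Gmot(k;\sigma) \to 1$ of \Cref{thm:fund_ses} for $X$ and $X'$, whose left-hand terms coincide because $X^{\sigma} = (X')^{\sigma'}$, and concludes by a diagram chase. No complex embedding of a function field is ever needed.

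Your reduction via $K = k(X)$, by contrast, needs an embedding of $k'(X')$ into $\C$ extending $\sigma'$, i.e.\ a point of $(X')^{\sigma'}$ that is Zariski-dense for $X'$. Such an embedding exists if and only if $\C$ has transcendence degree at least $\dim(X)$ over $\sigma'(k')$. The (finite) transcendence degree of $k(X)$ over $k$ is not the issue, and moving base points along paths does not help. So this fails, for instance, when $k = k' = \C$ and $\dim(X) \geq 1$.

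The countable descent you allude to is therefore genuinely required, and it must be done for the pair $(k,k')$. For example, let $k_i'$ range over countable subfields of $k'$ such that $k_i := k_i' \cap k$ contains a field of definition of $X$. Then $k_i'/k_i$ stays regular, because $k$ is algebraically closed in $k'$. These pairs form a filtered system exhausting $(k,k')$, and you must check that the base-change functors are compatible with the $2$-colimits of \Cref{lem:MNori(X)_colim}.

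This is doable, but it is bookkeeping the paper's route avoids entirely. In exchange, your detour yields some extra information, namely compatibility of Artin quotients along non-algebraically-closed regular extensions, which the proposition itself does not need.
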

\begin{proof}
	Since Artin motivic local systems are stable under subquotients inside all motivic local systems (being a Tannakian subcategory), the desired properties of the entire base-change functor $\MNoriLoc_{\sigma}(X) \to \MNoriLoc_{\sigma'}(X')$ imply the same properties for its restriction to Artin objects.
	So let us prove the conclusion for the entire base-change functor.
	
	To begin with, we reduce ourselves to the case when $X = \Spec(k)$.
	To this end, choose a base-point $x \in X^{\sigma}$, corresponding to $x' \in (X')^{\sigma'}$.
	The thesis holds for $X$ if and only if the homomorphism $\Gmot(X',x';\sigma') \to \Gmot(X,x;\sigma)$ is surjective (by \cite[Prop.~2.21(a)]{DM82}).
	Write $a_X \colon X \to \Spec(k)$ and $a'_{X'} \colon X' \to \Spec(k')$ for the structural morphisms.
	By construction, the diagram of base-change functors
	\begin{equation*}
		\begin{tikzcd}
			\MNori_{\sigma}(k) \arrow{rr} \arrow{d}{a_X^{\dagger}} &&
			\MNori_{\sigma'}(k') \arrow{d}{(a'_{X'})^{\dagger}} \\
			\MNoriLoc_{\sigma}(X) \arrow{rr} &&
			\MNoriLoc_{\sigma'}(X')
		\end{tikzcd}
	\end{equation*}
	commutes up to monoidal natural isomorphism;
	recall also that the base-change functor commutes with the forgetful functors towards local systems.
	Under Tannaka duality, we get the commutative diagram of pro-algebraic groups
	\begin{equation*}
		\begin{tikzcd}
			\pi_1^{\alg}(X',x';\sigma') \arrow{r} \arrow[equal]{d} &
			\Gmot(X',x';\sigma') \arrow{r} \arrow{d} &
			\Gmot(k';\sigma') \arrow{r} \arrow{d} &
			1 \\
			\pi_1^{\alg}(X,x;\sigma') \arrow{r} &
			\Gmot(X,x;\sigma) \arrow{r} &
			\Gmot(k;\sigma) \arrow{r} &
			1,
		\end{tikzcd}
	\end{equation*}
	in which both rows are exact (by \Cref{thm:fund_ses}).
	If the thesis holds for $\Spec(k)$, the right-most arrow in the diagram is surjective (by \cite[Prop.~2.21(a)]{DM82}).
	But then, the surjectivity of the middle vertical arrow follows formally by diagram-chasing, and consequently the thesis holds for $X$ as well (again by \cite[Prop.~2.21(a)]{DM82}).
	
	From now on, assume that $X = \Spec(k)$ and hence $X' = \Spec(k')$. 
	We further reduce ourselves to the case when the extension $k'/k$ is finitely generated (and still geometrically integral).
	To this end, let $\Ical_{k'/k}$ denote the filtered poset of all intermediate fields $k \subset L \subset k'$ which are finitely generated over $k$;
	note that each such extension $L/k$ is still geometrically integral (since so is the ambient extension $k'/k$).
	By \Cref{lem:MNori_colim}, the canonical functor
	\begin{equation*}
		\twocolim_{L \in \Ical_{k'/k}} \MNori_{\sigma'|_L}(L) \to \MNori_{\sigma'}(k')
	\end{equation*}
	is an equivalence.
	Suppose that the thesis is known to hold for all field extensions $L/k$ with $L \in \Ical_{k'/k}$.
	Then, using the explicit description of objects and morphisms in a filtered $2$-colimit, we deduce that the thesis also holds for the functor
	\begin{equation*}
		\MNori_{\sigma}(k) \to \twocolim_{L \in \Ical_{k'/k}} \MNori_{\sigma'|_L}(L),
	\end{equation*}
	which settles the case of the original extension $k'/k$.
	
	Finally, assume that the extension $k'/k$ is finitely generated (and still geometrically integral). 
	Fix a smooth, connected $k$-variety $S$ with function field $k'$, and note that it is automatically geometrically connected (precisely because the extension $k'/k$ is assumed geometrically integral).
	By construction, the base-change functor in the statement factors as
	\begin{equation*}
		\MNori_{\sigma}(k) \xrightarrow{a_S^{\dagger}} \MNoriLoc_{\sigma}(S) \xrightarrow{\eta_S^{\dagger}} \MNoriLoc_{\sigma}(k') = \MNori_{\sigma'}(k')
	\end{equation*}
	where the last passage witnesses the equivalence of \Cref{prop:MNori_sigma=MNori_Sigma}.
	Since both the first and the second arrow are known to be fully faithful, with essential image stable under subquotients (by \Cref{lem:a_X^*-fullyfaith} and \Cref{prop:MNori_generic}, respectively), the same holds for the composite functor.
\end{proof}

\begin{rem}\label[rem]{rem:MNoriLoc_inv+}
	The conclusion of \Cref{prop:MNoriLoc_inv} holds, more generally, for the base-change functor on motivic perverse sheaves over any $k$-variety:
	this can be deduced by Noetherian induction starting from the case of motivic local systems, by adapting the method of \cite[Prop.~1.9]{Ter24Nori}.
\end{rem}

\begin{rem}
	If $k'/k$ is an extension of algebraically closed fields, the base-change functor on Artin motivic local systems is in fact an equivalence.
	This follows essentially from the fact that every finite étale cover of $X'$ is the base-change of a finite étale cover of $X$ (see \cite[Prop.~0BQN]{Stacks}).
\end{rem}

To close the discussion, we deduce our second technical result about motivic Galois groups:
\begin{cor}\label[cor]{cor:Gmot_inv}
	Keep the notation and assumptions of \Cref{prop:MNoriLoc_inv}.
	Given $M \in \MNoriLoc(X)$, let $M' \in \MNoriLoc(X')$ denote its image under the base-change functor.
	Then, for every point $x \in X^{\sigma}$, corresponding to $x' \in (X')^{\sigma'}$, there is a canonical commutative diagram of  motivic Galois groups
	\begin{equation*}
		\begin{tikzcd}
			\Gmot(M',x';\sigma') \arrow{r}{\sim} \arrow[two heads]{d} &
			\Gmot(M,x;\sigma) \arrow[two heads]{d} \\
			\Gmot^{\Artin}(M',x';\sigma') \arrow{r}{\sim} &
			\Gmot^{\Artin}(M,x;\sigma),
		\end{tikzcd}
	\end{equation*}
	in which both horizontal arrows are isomorphisms.
\end{cor}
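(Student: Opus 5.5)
The plan is to mimic the proofs of \Cref{cor:Gmot_generic} and \Cref{cor:Gmot_sigma=Sigma}: first build a commutative square of neutralized Tannakian categories, then read off the group statement by Tannaka duality. The base-change functor $\MNoriLoc_{\sigma}(X) \to \MNoriLoc_{\sigma'}(X')$ is monoidal and exact. As explained before \Cref{ex:non-fullness}, it commutes with the forgetful functors to $\Loc_{\sigma}(X) \xrightarrow{\sim} \Loc_{\sigma'}(X')$. Composing with the stalk functors at $x$ and $x'$, which agree under the identification $X^{\sigma} = (X')^{\sigma'}$, we see that base change is compatible with the fibre functors at $x$ and $x'$. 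Hence it induces a monoidal functor of neutralized Tannakian categories $\langle M \rangle^{\otimes} \to \langle M' \rangle^{\otimes}$, and therefore a homomorphism $\Gmot(M',x';\sigma') \to \Gmot(M,x;\sigma)$.

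The next step is to show that this functor is an equivalence. By \Cref{prop:MNoriLoc_inv}, the base-change functor is fully faithful with essential image stable under subquotients, so its restriction to $\langle M \rangle^{\otimes}$ has the same properties. Base change is monoidal and commutes with duals, so the essential image of $\langle M \rangle^{\otimes}$ contains $M'$ and is stable under tensor products, duals and subquotients. It is therefore a Tannakian subcategory containing $M'$. It is also contained in $\langle M' \rangle^{\otimes}$, since each object of $\langle M \rangle^{\otimes}$ is a subquotient of sums of tensor expressions in $M$ and its dual. Hence the image equals $\langle M' \rangle^{\otimes}$, and the functor $\langle M \rangle^{\otimes} \to \langle M' \rangle^{\otimes}$ is an equivalence. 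Tannaka duality then gives the upper horizontal isomorphism.

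For the Artin part we argue the same way using the second half of \Cref{prop:MNoriLoc_inv}. The base-change functor sends $\MNoriLoc^{\Artin}_{\sigma}(X)$ into $\MNoriLoc^{\Artin}_{\sigma'}(X')$, so it maps $\langle M \rangle^{\otimes}_{\Artin}$ into $\langle M' \rangle^{\otimes}_{\Artin}$. This is the one point that needs care, namely essential surjectivity. Let $A'$ be an object of $\langle M' \rangle^{\otimes}_{\Artin}$. By the previous paragraph, $A' \cong B'$ for some $B \in \langle M \rangle^{\otimes}$. Since $A'$ lies in the essential image of $\MNoriLoc^{\Artin}_{\sigma}(X)$, we can also write $A' \cong C'$ with $C$ Artin over $X$. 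Full faithfulness of base change on all of $\MNoriLoc_{\sigma}(X)$ then yields $B \cong C$. Thus $B$ lies in $\langle M \rangle^{\otimes} \cap \MNoriLoc^{\Artin}_{\sigma}(X) = \langle M \rangle^{\otimes}_{\Artin}$, which gives essential surjectivity. This establishes the lower horizontal isomorphism.

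Finally, the square commutes: the inclusions $\langle M \rangle^{\otimes}_{\Artin} \subset \langle M \rangle^{\otimes}$ and $\langle M' \rangle^{\otimes}_{\Artin} \subset \langle M' \rangle^{\otimes}$ are compatible with base change and with the fibre functors. The vertical arrows are surjective by \cite[Prop.~2.21(a)]{DM82}, because these inclusions are fully faithful with essential image stable under subquotients. I expect the only real subtlety to be the Artin intersection argument above. Everything else is formal once \Cref{prop:MNoriLoc_inv} is available.
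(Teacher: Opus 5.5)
Your route is the paper's own: its proof just says that, by \Cref{prop:MNoriLoc_inv}, base change gives an equivalence $\langle M \rangle^{\otimes} \xrightarrow{\sim} \langle M' \rangle^{\otimes}$ of neutralized Tannakian categories, and that this equivalence restricts to $\langle M \rangle^{\otimes}_{\Artin} \xrightarrow{\sim} \langle M' \rangle^{\otimes}_{\Artin}$. Your treatment of the upper row, the compatibility with fibre functors and the vertical surjections is fine.

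\textbf{The gap.} The problem is essential surjectivity on the Artin parts. You assert that $A'$ ``lies in the essential image of $\MNoriLoc^{\Artin}_{\sigma}(X)$''. The second half of \Cref{prop:MNoriLoc_inv} only gives full faithfulness and stability of the essential image under subquotients. It does not give essential surjectivity onto $\MNoriLoc^{\Artin}_{\sigma'}(X')$, and that fails for a general geometrically integral $k'/k$.

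For example, take $X = \Spec(k)$ and $k' = k(t)$. The Artin motive $\Q_{k(\sqrt{t})/k(t)}$ is not a base change from $k$: every Artin motive over $k$ becomes trivial over $\overline{k}(t)$, while this one does not.

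Moreover, for objects of $\langle M' \rangle^{\otimes}_{\Artin}$, the assertion you take for granted is, given full faithfulness, equivalent to the Artin half of the corollary. So it is exactly what has to be proved. The missing ingredient is a descent statement: if $B \in \MNoriLoc_{\sigma}(X)$ and $B'$ is Artin over $X'$, then $B$ is Artin over $X$.

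\textbf{How to close it.} When $k$ and $k'$ are both algebraically closed, your step is justified by the remark following \Cref{prop:MNoriLoc_inv}. Every finite {\'e}tale cover of $X'$ descends to $X$, so base change on Artin objects is an equivalence. This case covers all later uses of the corollary in the paper: $\C/k$ with $k$ algebraically closed, and $\C/\overline{k(X_k)}$.

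In general, reduce to that case as follows.
\begin{enumerate}
\item Choose an algebraic closure $\overline{k'}$ with a complex embedding extending $\sigma'$, and let $\overline{k} \subset \overline{k'}$ be the algebraic closure of $k$.
\item The base change of $B$ to $X_{\overline{k'}}$ is Artin, being the base change of the Artin object $B'$.
\item By the remark above, that base change is isomorphic to the base change of some Artin object $C$ over $X_{\overline{k}}$. Full faithfulness for $\overline{k'}/\overline{k}$ (\Cref{prop:MNoriLoc_inv}) then gives $B_{\overline{k}} \cong C$, so $B_{\overline{k}}$ is Artin.
\item Hence $B_{\overline{k}}$ is trivialized by a connected finite {\'e}tale cover of $X_{\overline{k}}$. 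By \Cref{lem:MNori(X)_colim} and the usual limit arguments, this cover and the trivializing isomorphism are defined over a finite extension of $k$.
\item So $B$ becomes trivial on a connected finite {\'e}tale cover $e \colon W \to X$. Since $B \hookrightarrow e_* e^* B \cong ({^p \Q}_{W/X})^{\oplus n}$, $B$ is Artin.
\end{enumerate}
Applied to your $B \in \langle M \rangle^{\otimes}$ with $B' \cong A'$, this gives $B \in \langle M \rangle^{\otimes}_{\Artin}$, which is the essential surjectivity you need.
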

\begin{proof}
	Indeed, by \Cref{prop:MNoriLoc_inv}, the base-change functor determines an equivalence of neutralized Tannakian categories $\langle M \rangle^{\otimes} \xrightarrow{\sim} \langle M' \rangle^{\otimes}$, which restricts to an analogous equivalence $\langle M \rangle^{\otimes}_{\Artin} \xrightarrow{\sim} \langle M' \rangle^{\otimes}_{\Artin}$.
\end{proof}

%\begin{prop}\label{prop:MArtinLoc_inv}
%	Let $k'/k$ be an extension of algebraically closed fields.
%	Then, the base-change functor on Artin motivic local systems
%	\begin{equation*}
%		\MNoriLoc_{\Artin}(X) \to \MNoriLoc_{\Artin}(X')
%	\end{equation*}
%	is in fact an equivalence.
%\end{prop}
%\begin{proof}
%	As a consequence of \Cref{prop:MNoriLoc_inv}, we know that the functor in the statement is fully faithful, with essential image stable under subquotients.
%	Therefore, it suffices to show that the essential image of $\MNoriLoc(X)$ contains a generating family of the Tannakian category $\MNoriLoc(X')$.
%	We claim that, for every finite étale morphism $e' \colon Y' \to X'$, the object $\Q_{Y'/X'} := e_* \Q_{Y'} \in \MNoriLoc_{\Artin}(X')$ lies in the essential image of $\MNoriLoc(X)$.
%	The key point is that, by \cite[Prop. 58.8.4]{Stacks}, the morphism $e'$ is the base-change of a finite étale morphism $e \colon Y \to X$.
%	By proper base-change, the object $\Q_{Y'/Y}$ is isomorphic to the base-change of the corresponding object $\Q_{Y/X} \in \MNoriLoc_{\Artin}(X)$. 
%\end{proof}

%\begin{cor}\label{cor:Gmot_inv}
%	Keep the assumptions and notation of \Cref{prop:MArtinLoc_inv}.
%	Then, the canonical closed immersion
%	\begin{equation*}
%		\Gmot(M',x') \hookrightarrow \Gmot(M,x)
%	\end{equation*}
%	is an isomorphism, and the induced homomorphism
%	\begin{equation*}
%		\Gmot^{\Artin}(M',x') \to \Gmot^{\Artin}(M,x)
%	\end{equation*}
%	is an isomorphism as well.
%\end{cor}

\section{The exceptional locus}\label{sect:MotLocus}

The present section contains the main results of the paper: 
we define the exceptional locus of a motivic local systems and study its general geometric and arithmetic properties.

We work over varieties defined over the complex numbers or subfields thereof;
the complex embedding used to define Nori motivic local systems and their motivic Galois groups will always be the inclusion, and we omit it from the notation.
 
\subsection{Definition of the exceptional locus}

Let $X$ be a smooth, connected complex variety.
For every point $x \in X(\C)$, the shifted inverse image functor $x^{\dagger} \colon \MNoriLoc(X) \to \MNori(\C)$ determines a closed immersion between Tannaka-dual groups
\begin{equation*}
	\Gmot(\C) \hookrightarrow \Gmot(X,x),
\end{equation*}
which is a section to the canonical homomorphism $\Gmot(X,x) \twoheadrightarrow \Gmot(\C)$ induced by the structural morphism $a_X \colon X \to \Spec(\C)$.
Similarly, for every motivic local system $M \in \MNoriLoc(X)$, the functor $x^{\dagger} \colon \langle M \rangle^{\otimes} \to \langle x^{\dagger} M \rangle^{\otimes}$ determines a closed immersion
\begin{equation*}
	\Gmot(x^{\dagger} M) \hookrightarrow \Gmot(M,x).
\end{equation*}
In particular, we have $\dim(\Gmot(x^{\dagger} M)) \leq \dim(\Gmot(M,x))$, with equality if and only if the inclusion has finite index.

As $x \in X(\C)$ varies, the assignment $x \mapsto \Gmot(M,x)$ can be thought of as a local system of algebraic groups.
It is natural to wonder how far is the assignment $x \mapsto \Gmot(x^{\dagger} M)$ from defining a sub-local system.
In order to understand this, it is convenient to start by removing the contribution of Artin motivic local systems.

\begin{nota}
	For every point $x \in X(\C)$, we set $\Gmot^{\odot}(M,x) := \ker\left\{\Gmot(M,x) \twoheadrightarrow \Gmot^{\Artin}(M,x)\right\}$.
\end{nota}

As $x \in X(\C)$ varies, the assignment $x \mapsto \Gmot^{\Artin}(M,x)$ can be thought of as a quotient local system of $x \mapsto \Gmot(M,x)$.
Therefore, the assignment $x \mapsto \Gmot^{\odot}(M,x)$ defines a sub-local system of it.
Since $\Gmot^{\Artin}(M,x)$ is a finite algebraic group (see \Cref{rem:Gmot^Artin-finite}), the group $\Gmot^{\odot}(M,x)$ has finite index inside $\Gmot(M,x)$.
Hence, its pre-image under the closed immersion $\pi_1^{\alg}(\iota_X(M),x) \hookrightarrow \Gmot(M,x)$ is a normal subgroup of finite index inside the algebraic monodromy group $\pi_1^{\alg}(\iota_X(M),x)$.
In particular, $\Gmot^{\odot}(M,x)$ contains the image of some normal subgroup of finite index of the topological fundamental group $\pi_1(X,x)$.
More precisely:
\begin{lem}
	Let $e \colon X' \to X$ be the finite Galois covering with Galois group $\Gmot^{\Artin}(M,x)$, and let $x' \in X'(\C)$ be a pre-image of $x$.
	Then, the composite homomorphism
	\begin{equation*}
		\pi_1^{\alg}((X')^{\sigma},x') \to \pi_1^{\alg}(X^{\sigma},x) \twoheadrightarrow \pi_1^{\alg}(\iota_X(M),x) \hookrightarrow \Gmot(M,x) 
	\end{equation*}
	factors through $\Gmot^{\odot}(M,x)$.
\end{lem}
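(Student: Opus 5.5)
The plan is to read the composite Tannaka-dually and reduce the claim to a statement about Artin objects: pulled back to $X'$, every object of $\langle M \rangle^{\otimes}_{\Artin}$ has trivial underlying local system. By construction, $\Gmot^{\odot}(M,x)$ is the kernel of $\Gmot(M,x) \twoheadrightarrow \Gmot^{\Artin}(M,x)$. So it suffices to show that the composite
\begin{equation*}
	\pi_1^{\alg}((X')^{\sigma},x') \to \pi_1^{\alg}(X^{\sigma},x) \to \Gmot(M,x) \twoheadrightarrow \Gmot^{\Artin}(M,x)
\end{equation*}
is trivial. Under Tannaka duality this composite corresponds to the monoidal functor
\begin{equation*}
	\langle M \rangle^{\otimes}_{\Artin} \subset \MNoriLoc(X) \xrightarrow{\iota_X} \Loc(X) \xrightarrow{e^{\dagger}} \Loc(X'),
\end{equation*}
together with the fibre functor at $x'$. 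The homomorphism is trivial exactly when this functor lands in the trivial local systems, i.e.\ in the essential image of $\vect_{\Q}$ via $a_{X'}^{\dagger}$, compatibly with the fibre functors. Since trivial local systems form a Tannakian subcategory stable under subquotients, it is enough to check this on a tensor generator of $\langle M \rangle^{\otimes}_{\Artin}$.

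Next I would identify such a generator. By \Cref{rem:Gmot^Artin-finite} in the relative setting, or directly from the definition of $\MNoriLoc^{\Artin}(X)$ as a filtered union together with $\langle M\rangle^{\otimes}$ being finitely generated, the category $\langle M \rangle^{\otimes}_{\Artin}$ is a Tannakian subcategory of some $\langle {^p \Q}_{Y/X} \rangle^{\otimes}$ with $Y \to X$ a connected finite Galois cover. Its Tannaka dual is a finite quotient of $\Gal(Y/X)$, i.e.\ $\Gal(X''/X)$ for an intermediate Galois cover $X''$. Then $\langle M \rangle^{\otimes}_{\Artin} = \langle {^p \Q}_{X''/X} \rangle^{\otimes}$ by Galois correspondence, using \Cref{ex:MNoriLoc_Artin}. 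The covering $X'$ in the statement is by definition this $X''$, the Galois cover with group $\Gmot^{\Artin}(M,x)$ cut out by the surjection $\pi_1^{\et}(X,x) \twoheadrightarrow \Gmot^{\Artin}(M,x)$.

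Finally I would compute $e^{\dagger}\iota_X({^p \Q}_{X'/X})$. By proper base change, as in the functoriality discussion at the end of \Cref{sect:MNori}, this is the pushforward of the constant sheaf along $X' \times_X X' \to X'$. Since $X'/X$ is Galois, $X' \times_X X' \cong \coprod_{g \in \Gal(X'/X)} X'$, so the result is $\Q^{\oplus |\Gal(X'/X)|}$ shifted, which is a trivial local system. Hence the whole subcategory maps to trivial local systems, and the composite to $\Gmot^{\Artin}(M,x)$ is trivial.

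The main obstacle is the identification step: making precise that the finite quotient $\Gmot^{\Artin}(M,x)$ comes from a single Galois cover, and that the monodromy action of $\pi_1(X,x)$ on the fibre of $\iota_X({^p \Q}_{X'/X})$ is the permutation action through $\Gal(X'/X)$. If that becomes delicate, a direct fallback is available: the image of $\pi_1((X')^{\sigma},x')$ in $\pi_1(X,x)$ is exactly the kernel of $\pi_1(X,x) \to \Gal(X'/X) = \Gmot^{\Artin}(M,x)$. Passing to algebraic completions then gives the factorization, because a finite quotient is insensitive to Zariski closure.
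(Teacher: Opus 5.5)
Your proof is correct and is essentially the paper's argument: both rest on the fact that pulling back along the Galois cover $e$ trivializes $\langle M \rangle^{\otimes}_{\Artin}$. The paper phrases this motivically, factoring the composite through $\Gmot(e^* M,x')$ and noting that $\Gmot(e^* M,x') \to \Gmot^{\Artin}(M,x)$ is trivial because $e^* N$ is trivial in $\MNoriLoc(X')$ for $N \in \langle M \rangle^{\otimes}_{\Artin}$; you instead check the triviality only on the underlying local systems (via proper base change and $X' \times_X X' \cong \coprod_{g} X'$), which is all a statement about $\pi_1^{\alg}$ requires.
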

\begin{proof}
	The homomorphism in the statement equals the composite
	\begin{equation*}
		\pi_1^{\alg}((X')^{\sigma},x') \twoheadrightarrow \pi_1^{\alg}(e^* \iota_X(M),x') \hookrightarrow \pi_1^{\alg}(\iota_X(M),x) \hookrightarrow \Gmot(M,x),
	\end{equation*}
	which in turn equals the composite
	\begin{equation*}
		\pi_1^{\alg}((X')^{\sigma},x') \twoheadrightarrow \pi_1^{\alg}(\iota_{X'}(e^* M),x') \hookrightarrow \Gmot(e^* M,x') \hookrightarrow \Gmot(M,x).
	\end{equation*}
	The latter factors through $\Gmot^{\odot}(M,x)$, because the composite
	\begin{equation*}
		\Gmot(e^* M,x') \hookrightarrow \Gmot(M,x) \twoheadrightarrow \Gmot^{\Artin}(M,x)
	\end{equation*}
	is already trivial (by construction).
\end{proof}

The group $\Gmot^{\odot}(M,x)$ provides a refined upper bound for $\Gmot(x^{\dagger} M)$, as the following observation shows:
\begin{lem}\label[lem]{lem:Gmot_fibre-to-Artin}
	For every point $x \in X(\C)$, the closed immersion $\Gmot(x^{\dagger} M) \hookrightarrow \Gmot(M,x)$ factors through $\Gmot^{\odot}(M,x)$.
\end{lem}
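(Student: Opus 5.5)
We have to show that the composite $\Gmot(x^{\dagger} M) \hookrightarrow \Gmot(M,x) \twoheadrightarrow \Gmot^{\Artin}(M,x)$ is trivial. In Tannakian terms, the composite corresponds to the monoidal functor $x^{\dagger} \colon \langle M \rangle^{\otimes}_{\Artin} \to \langle x^{\dagger} M \rangle^{\otimes} \subset \MNori(\C)$, compatible with the fibre functors. A homomorphism of Tannaka duals is trivial exactly when the corresponding functor lands in trivial objects. So the plan is to show that $x^{\dagger}$ sends every Artin motivic local system over $X$ to a trivial motive in $\MNori(\C)$, that is, to an object of $\langle \Q_{\C} \rangle^{\otimes}$.

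First, the generators. Let $e \colon X' \to X$ be a finite étale cover. Form the Cartesian square with $\Spec(\C) \xrightarrow{x} X$; its fibre $X'_x$ is a finite disjoint union of copies of $\Spec(\C)$. By proper base change, exactly as in the argument showing that $f^{\dagger}$ preserves Artin objects, we get
\begin{equation*}
	x^{\dagger}({^p \Q}_{X'/X}) \simeq \pi_{\C}(e_{x,*} \unit_{X'_x}) \simeq \Q_{\C}^{\oplus \deg(e)}.
\end{equation*}
This is trivial. Equivalently, since $\C$ is algebraically closed, $\MNori^{\Artin}(\C) = \langle \Q_{\C} \rangle^{\otimes}$, and $x^{\dagger}$ maps Artin local systems into Artin motives by the functoriality recalled at the end of \Cref{sect:MNori}.

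Next, the whole category. Trivial objects form a Tannakian subcategory of $\MNori(\C)$, and $x^{\dagger}$ is exact and monoidal. Hence the full subcategory of objects of $\MNoriLoc^{\Artin}(X)$ sent to trivial objects is stable under subquotients, tensor products and duals. It therefore contains each $\langle {^p \Q}_{X'/X} \rangle^{\otimes}$, and so all of $\MNoriLoc^{\Artin}(X)$, in particular $\langle M \rangle^{\otimes}_{\Artin}$.

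Finally, we translate back to groups. $\Gmot(x^{\dagger} M)$ acts trivially on $x^{\dagger} A$ for every $A \in \langle M \rangle^{\otimes}_{\Artin}$. The action of $\Gmot(M,x)$ on the fibre $x^{\dagger}\iota_X(A)$ restricts along $\Gmot(x^{\dagger} M) \hookrightarrow \Gmot(M,x)$ to this action on $x^{\dagger} A$. Hence the image of $\Gmot(x^{\dagger} M)$ in $\Gmot^{\Artin}(M,x)$ acts trivially on a faithful representation, so it is trivial. The only step requiring care is this compatibility of the closed immersion with the fibre functors, which holds by construction since the fibre functor at $x$ is $x^{\dagger}$ followed by $\iota_{\C}$; I expect no real obstacle.
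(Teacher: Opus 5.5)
Your proof is correct. It shares the paper's reduction: via Tannaka duality, both arguments reduce to showing that $x^{\dagger} N$ is a trivial motive for every $N \in \langle M \rangle^{\otimes}_{\Artin}$. The two proofs differ only in how this triviality is established.

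The paper picks the finite Galois cover $e \colon X' \to X$ with group $\Gmot^{\Artin}(M,x)$ and a lift $x'$ of $x$. It then uses $x^{\dagger} N \simeq (x')^{\dagger} e^* N$, where $e^* N$ is already trivial on $X'$. That triviality implicitly rests on the same proper base change, together with the splitting of $X' \times_X X'$.

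You instead base-change the covers directly to the point, where they split, and then close up under Tannakian operations. As you also note, one can simply combine the functoriality of Artin objects under $f^{\dagger}$, applied to $f = x$, with $\MNori^{\Artin}(\C) = \langle \Q_{\C} \rangle^{\otimes}$.

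Your route is slightly more direct. It shows at once that $x^{\dagger}$ kills all of $\MNoriLoc^{\Artin}(X)$, and it does not require identifying the cover attached to $M$. The paper's version instead makes visible that the trivialization already happens on $X'$, which is the same mechanism used in the preceding lemma about $\pi_1^{\alg}((X')^{\sigma},x')$.
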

\begin{proof}
	We have to show that the composite homomorphism
	\begin{equation*}
		\Gmot(x^{\dagger} M) \hookrightarrow \Gmot(M,x) \twoheadrightarrow \Gmot^{\Artin}(M,x)
	\end{equation*} 
	is trivial.
	Under Tannaka duality, this amounts to showing that, for every given $N \in \langle M \rangle^{\otimes}_{\Artin}$, the motive $x^{\dagger} N \in \MNori(\C)$ is trivial.
	To this end, let $e \colon X' \to X$ be the finite Galois covering with Galois group $\Gmot^{\Artin}(M,x)$, and fix a pre-image $x'$ of $x$ in $X'(\C)$.
	We have an isomorphism of motivic Galois groups
	\begin{equation*}
		\Gmot((x')^{\dagger} e^* N) \xrightarrow{\sim} \Gmot(x^{\dagger} N),
	\end{equation*}
	induced by the isomorphism of motives $(x')^{\dagger} e^* N = x^{\dagger} N$.
	But the group on the left-hand side is trivial, because the object $e^* N \in \MNoriLoc(X')$ is already trivial (by construction).
\end{proof}

We are thus led to compare the assignment $x \mapsto \Gmot(x^{\dagger} M)$ to the local system $x \mapsto \Gmot^{\odot}(M,x)$.
The key new definition of the present paper is the following:
\begin{defn}
	We define the \textit{exceptional locus} of $M \in \MNoriLoc(X)$ as the subset $\MotLocus(M) \subset X(\C)$ consisting of all points $x \in X(\C)$ such that the closed immersion $\Gmot(x^{\dagger} M) \hookrightarrow \Gmot^{\odot}(M,x)$ is not an isomorphism.
\end{defn}

Recall that the group $\Gmot(x^{\dagger} M)$ is conjectured to be connected (see \Cref{rem:Gmot(kbar)=connected}).
Hence, the homomorphism $\Gmot(x^{\dagger} M) \hookrightarrow \Gmot(M,x)$ is expected to factor through the identity component $\Gmot^0(M,x)$.
On the other hand, the finite quotient $\Gmot^{\Artin}(M,x)$ is conjectured to be exactly the group of connected components of $\Gmot(M,x)$.
In other words, the subgroup $\Gmot^{\odot}(M,x)$ is conjectured to coincide with $\Gmot^0(M,x)$. 
The conclusion of \Cref{lem:Gmot_fibre-to-Artin} is coherent with this expectation.

\subsection{Geometric description}

As in the previous subsection, let $X$ be a smooth, connected complex variety, and fix a motivic local system $M \in \MNoriLoc(X)$.
We are going to describe the exceptional locus $\MotLocus(M)$ geometrically, showing in particular that it forms a small subset of $X(\C)$ in a precise sense;
this will justify why points in $\MotLocus(M)$ should be considered "exceptional".

As a quick sanity check, note that when $X = \Spec(\C)$ we trivially have $\MotLocus(M) = \emptyset$, because $\Gmot^{\Artin}(\C) = 1$ (as $\C$ is algebraically closed).
Moreover, we always have $\MotLocus(M) = \emptyset$ whenever $M$ is an Artin motivic local system (as a consequence of \Cref{lem:Gmot_fibre-to-Artin} above).

In general, the exceptional locus is a non-trivial and very interesting subset of $X(\C)$.
Its general geometric properties can be summarized as follows:
\begin{thm}\label[thm]{thm:MotLocus}
	The locus $\MotLocus(M) \subset X(\C)$ is a countable union of subsets of the form $S(\C)$, with $S$ a strict closed algebraic subvariety of $X$.
	Moreover, if $X$ admits a model $X_k$ over an algebraically closed subfield $k \subset \C$ such that $M$ belongs to the essential image of $\MNoriLoc(X_k)$, then each maximal closed subvariety inside $\MotLocus(M)$ is defined over $k$ as well.
\end{thm}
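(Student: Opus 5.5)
The plan is to reduce everything to a countable algebraically closed field of definition and then run a Noetherian induction over closed subvarieties defined over that field. First, by \Cref{lem:MNori(X)_colim} applied to the filtered family of finitely generated subfields of $\C$, the object $M$ descends to $\MNoriLoc(X_{k_0})$ for some finitely generated $k_0 \subset \C$. Replacing $k_0$ by its algebraic closure $k \subset \C$, which is countable, we may assume that $M$ comes from $M_k \in \MNoriLoc(X_k)$ with $k$ countable and algebraically closed. In the second assertion $k$ is given and need not be countable. Even so, we can shrink it to the algebraic closure of a finitely generated subfield of $k$ over which $M_k$ already descends, again by \Cref{lem:MNori(X)_colim}. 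Every closed subvariety defined over this smaller field is defined over $k$, so both assertions follow from the statement for countable algebraically closed $k$. The heart of the argument is then the following.

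\textbf{Key claim.} If $x \in X(\C)$ is dense for the Zariski topology of $X_k$, then $x \notin \MotLocus(M)$.

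To prove the key claim, let $\Sigma \colon k(X_k) \hookrightarrow \C$ be the embedding corresponding to $x$ (\Cref{constr:Sigma-z}), and let $N \in \MNori_{\Sigma}(k(X_k))$ correspond to $M_k$. By \Cref{cor:Gmot_sigma=Sigma}, together with \Cref{cor:Gmot_inv} for the extension $\C/k$ (geometrically integral since $k$ is algebraically closed), we get $\Gmot(M,x) \cong \Gmot(N;\Sigma)$, compatibly with the Artin quotients. Hence $\Gmot^{\odot}(M,x) \cong \ker(\Gmot(N) \to \Gmot^{\Artin}(N))$, which equals $\Gmot(\overline{N})$ by \Cref{cor:ses_Gmot-Artin}, where $\overline{N}$ is the image of $N$ over the algebraic closure $K := \overline{k(X_k)} \subset \C$. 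On the other hand, $x^{\dagger}M$ is the base change of $\overline{N}$ along $K \subset \C$. This needs a compatibility check: the stalk at $x$, viewed through $\Spec \C \to \Spec K \to \Spec k(X_k) \to X_k$, agrees with base change of $N$. The extension $\C/K$ is again one of algebraically closed fields, so \Cref{cor:Gmot_inv} with $X = \Spec K$ gives $\Gmot(x^{\dagger}M) \cong \Gmot(\overline{N})$. Finally, we must check that the composite identification is the given closed immersion $\Gmot(x^{\dagger}M) \hookrightarrow \Gmot^{\odot}(M,x)$. \emph{I expect this compatibility of all the functors with the fibre functor at $x$ to be the main technical obstacle}; once it holds, the immersion is an isomorphism.

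Next comes the induction. The remaining step is the birational compatibility \Cref{lem:MotLocus_birat} mentioned in the introduction, which I would prove by restriction to opens, using \Cref{prop:MNori_generic} for the invariance of $\Gmot(M,x)$ and $\Gmot^{\Artin}(M,x)$ under restriction to a dense open. Let $S \subset X_k$ be an irreducible closed subvariety. By embedded resolution \cite{Hir64}, choose a smooth connected $\tilde S$ proper and birational over $S$, an isomorphism over a dense open $S^\circ \subset S$, with map $f\colon \tilde S \to X_k$. Applying the key claim to $f^{\dagger}M$ on $\tilde S$ shows that a point $t$ with Zariski closure $S$ lies in $\MotLocus(M)$ if and only if $\Gmot^{\odot}(f^{\dagger}M,t) \subsetneq \Gmot^{\odot}(M,t)$. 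Write $G_S := \Gmot^{\odot}(f^{\dagger}M,t)$ for the group that this condition compares with $\Gmot^{\odot}(M,t)$. Since the groups form local systems, the condition depends only on $S$ and not on $t$: we call $S$ \emph{exceptional} when $G_S \subsetneq \Gmot^{\odot}(M,t)$.

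Every point $x \in X(\C)$ has a Zariski closure $S_x$ over $k$, irreducible and defined over $k$. So $\MotLocus(M) = \bigcup_{S\ \text{exceptional}} \{x : S_x = S\}$. This is contained in the countable union of the sets $S(\C)$ over exceptional $S$, countable because $k$ is. For the reverse inclusion, we need that if $S$ is exceptional and $S' \subset S$, then $S'$ is exceptional too. This follows from the inclusion $G_{S'} \subset G_S$, where $G_{S'}$ denotes the group obtained by repeating the construction for $S'$ in place of $S$. To prove the inclusion, restrict along $\tilde S' \to \tilde S$ after a further resolution and transport the groups along paths, using \Cref{lem:Gmot_fibre-to-Artin} applied on $\tilde S$. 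Hence $\MotLocus(M) = \bigcup_{S \text{ exceptional}} S(\C)$. The generic point of $X_k$ itself is not exceptional by the key claim, so each $S$ is strict. The maximal closed subvarieties contained in $\MotLocus(M)$ are then among the exceptional $S$ by irreducibility (a variety is not a countable union of proper closed subsets), hence defined over $k$.
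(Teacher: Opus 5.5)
Your proposal is correct and follows the paper's proof. It uses the same reduction to a countable algebraically closed field of definition, the same key claim that $k$-Zariski-dense points lie outside $\MotLocus(M)$ (via \Cref{cor:Gmot_sigma=Sigma}, \Cref{cor:Gmot_inv} and \Cref{cor:ses_Gmot-Artin}), and the same generic-point and local-system argument for each irreducible $S_k \subset X_k$. The deviations are minor: you treat singular $S_k$ by pulling back along a resolution $f\colon \tilde S \to X$ and applying \Cref{lem:Gmot_fibre-to-Artin} at lifts of points (valid because $\tilde S \to S$ is surjective), which makes embedded resolution and \Cref{lem:MotLocus_birat} unnecessary even though you announce the latter; and you spell out the reduction from an arbitrary algebraically closed $k$ to a countable one, which the paper leaves implicit.
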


This theorem represents the main result of the present paper.
It is a motivic analogue of Cattani--Deligne--Kaplan's celebrated result for the (tensorial) Hodge locus of pure variations of Hodge structure \cite[Thm.~1.1]{CDK95}, extended to admissible mixed variations by Brosnan--Pearlstein--Schnell in \cite[Cor.~2]{BPS10}.

In the course of the proof, we will use an auxiliary observation of independent interest:
\begin{lem}\label[lem]{lem:MotLocus_birat}
	Let $p \colon X' \to X$ be a birational morphism between smooth, connected complex varieties.
	Then, for every $M \in \MNoriLoc(X)$, we have the equality
	\begin{equation*}
		\MotLocus(p^* M) = p^{-1}(\MotLocus(M)).
	\end{equation*}
\end{lem}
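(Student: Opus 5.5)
Fix $x' \in X'(\C)$ and set $x := p(x')$. The plan is to compare, at these two points, the three groups entering the definition of the exceptional locus, namely the stalk group, the full monodromy-type group and its Artin quotient, and to show that each of them is the same for $p^* M$ at $x'$ as for $M$ at $x$. Write $M' := p^{\dagger} M$; the shift is zero because $p$ is birational, so $\dim X' = \dim X$.

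The first step concerns stalks. We have a canonical isomorphism of motives $(x')^{\dagger} M' = x^{\dagger} M$ in $\MNori(\C)$, hence $\Gmot((x')^{\dagger} M') = \Gmot(x^{\dagger} M)$. The functor $p^{\dagger}$ restricts to a monoidal functor $\langle M \rangle^{\otimes} \to \langle M' \rangle^{\otimes}$ compatible with the fibre functors at $x'$ and $x$. It therefore induces a homomorphism $\Gmot(M',x') \to \Gmot(M,x)$, which is a closed immersion since every object of $\langle M'\rangle^{\otimes}$ is a subquotient of an image. This homomorphism is compatible with the Artin quotients, by the functoriality of Artin motivic local systems under inverse images recalled at the end of \Cref{sect:MNori}. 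The stalk inclusions $\Gmot((x')^{\dagger}M') \hookrightarrow \Gmot(M',x')$ and $\Gmot(x^{\dagger}M) \hookrightarrow \Gmot(M,x)$ commute with it.

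The key step is to show that $p^{\dagger} \colon \langle M \rangle^{\otimes} \to \langle M' \rangle^{\otimes}$ is an equivalence, and that it restricts to an equivalence $\langle M \rangle^{\otimes}_{\Artin} \to \langle M' \rangle^{\otimes}_{\Artin}$. I would obtain this via the generic categories. Since $p$ is birational, there is a dense open $U \subset X$ over which $p$ is an isomorphism, so $k(X') = k(X)$, and $p$ induces a compatible equivalence $\twocolim_{U} \MNoriLoc(U) \simeq \twocolim_{U'} \MNoriLoc(U')$. Indeed, opens of the form $p^{-1}(U)$ are cofinal among dense opens of $X'$, because $p$ is dominant and every open of $X'$ contains such a preimage after shrinking; this gives $\MNoriLoc(k(X)) \simeq \MNoriLoc(k(X'))$, compatibly with the Artin subcategories. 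We then get a commutative square with $\eta_X^{\dagger}$, $\eta_{X'}^{\dagger}$ and $p^{\dagger}$. Both $\eta$'s are fully faithful with essential image stable under subquotients, also on Artin objects, by \Cref{prop:MNori_generic}. Hence $p^{\dagger}$ is fully faithful, with essential image stable under subquotients. This yields $\Gmot(M',y') \xrightarrow{\sim} \Gmot(M,p(y'))$ and similarly for the Artin quotients, and so $\Gmot^{\odot}(M',x') \xrightarrow{\sim} \Gmot^{\odot}(M,x)$.

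The main subtlety is that these isomorphisms are first obtained at base points in the dense open, where they are compatible with the stalk maps. One would like to conclude that the functor-level equivalence $\langle M\rangle^{\otimes} \simeq \langle M'\rangle^{\otimes}$ gives the isomorphism at every base point, but this is not automatic. The remedy is to work directly with the fibre functors at $x'$ and $x$: the equivalence of neutralized Tannakian categories holds for any base point, since $(x')^{\dagger}\circ p^{\dagger} \cong x^{\dagger}$ on underlying local systems. With this in hand, we have a commutative square in which the two vertical inclusions $\Gmot((x')^{\dagger}M') \hookrightarrow \Gmot^{\odot}(M',x')$ and $\Gmot(x^{\dagger}M) \hookrightarrow \Gmot^{\odot}(M,x)$ are linked by horizontal isomorphisms. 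One is an isomorphism exactly when the other is, so $x' \in \MotLocus(M')$ if and only if $p(x') \in \MotLocus(M)$, which is the claimed equality.
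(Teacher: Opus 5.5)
Your proposal is correct and follows the paper's proof. Like the paper, you deduce that $p^{*}$ is fully faithful with essential image stable under subquotients (also on Artin objects) from the square formed by $\eta_X^{\dagger}$, $\eta_{X'}^{\dagger}$ and the equivalence of generic categories, pass to Tannaka duals at $x'$ and $p(x')$, and compare the stalk inclusions via $(x')^{\dagger}p^{*}M = p(x')^{\dagger}M$; the ``subtlety'' in your last paragraph is not actually an issue, because the functor-level statement does not depend on base points and gives the isomorphisms at any pair of compatible fibre functors, which is exactly how the paper argues.
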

\begin{proof}
	Note that the inverse image functor
	\begin{equation*}
		p^{\dagger} = p^* \colon \MNoriLoc(X) \to \MNoriLoc(X')
	\end{equation*}
	is fully faithful, with essential image stable under subquotients:
	indeed, it fits into the commutative diagram
	\begin{equation*}
		\begin{tikzcd}
			\MNoriLoc(X) \arrow{rr}{p^*} \arrow{d} &&
			\MNoriLoc(X') \arrow{d} \\
			\MNoriLoc(\C(X)) \arrow{rr}{\sim} &&
			\MNoriLoc(\C(X')),
		\end{tikzcd}
	\end{equation*}
	in which both vertical arrows enjoy the same properties (by \Cref{prop:MNori_generic}) and the lower horizontal arrow is even an equivalence (as $p$ is birational, by hypothesis).
	Since Artin motivic local systems form Tannakian subcategories, we see that the same properties hold for the restriction 
	\begin{equation*}
		p^{\dagger} = p^* \colon \MNoriLoc^{\Artin}(X) \to \MNoriLoc^{\Artin}(X').
	\end{equation*}
	As a consequence, for every point $x' \in X'(\C)$ we have a commutative diagram of the form
	\begin{equation*}
		\begin{tikzcd}
			\Gmot(p^* M,x') \arrow{r}{\sim} \arrow[two heads]{d} &
			\Gmot(M,p(x')) \arrow[two heads]{d} \\
			\Gmot^{\Artin}(p^* M,x') \arrow{r}{\sim} &
			\Gmot^{\Artin}(M,p(x')),
		\end{tikzcd}
	\end{equation*}
	whence an isomorphism
	\begin{equation*}
		\Gmot^{\odot}(p^* M,x') \xrightarrow{\sim} \Gmot^{\odot}(M,p(x')).
	\end{equation*}
	The latter fits into the commutative diagram
	\begin{equation*}
		\begin{tikzcd}
			\Gmot((x')^{\dagger} p^* M) \arrow[hook]{r} \arrow{d}{\sim} &
			\Gmot^{\odot}(p^* M, x') \arrow{d}{\sim} \\
			\Gmot(p(x')^{\dagger} M) \arrow[hook]{r} &
			\Gmot^{\odot}(M,p(x')),
		\end{tikzcd}
	\end{equation*}
	in which the left-most vertical arrow is induced by the isomorphism of motives $(x')^{\dagger} p^* M = p(x')^{\dagger} M$.
	Thus, the upper horizontal arrow is an isomorphism if and only if so is the lower horizontal arrow.
\end{proof}

\begin{rem}
	The result remain true if $p \colon X' \to X$ is a dominant morphism such that the induced function field extension $\C(X')/\C(X)$ is geometrically integral.
	Indeed, one can show that the conclusion of \Cref{prop:MNoriLoc_inv} still holds for the base-change functor $\MNoriLoc(\C(X)) \to \MNoriLoc(\C(X'))$.
	%One way to obtain this is by applying the usual version of \Cref{prop:MNoriLoc_inv} with respect to a choice of compatible complex embeddings of $\C(X)$ and $\C(X')$.
	%This is legitimate, because the categories of Nori motives and their functoriality ultimately depend only on the scheme and not on similar auxiliary choices (by \cite[Prop.~6.11]{IM24} and \cite[Rmk.~2.7(1), Rmk.~3.5(1)]{Ter24UAF}).
\end{rem}

\begin{proof}[Proof of \Cref{thm:MotLocus}]
	Fix a countable, algebraically closed subfield $k \subset \C$ such that $X$ admits a model $X_k$ over $k$ and $M$ is isomorphic to the image of an object $M_k \in \MNoriLoc(X_k)$ under the base-change functor $\MNoriLoc(X_k) \to \MNoriLoc(X)$:
	the existence of such a field is guaranteed by \Cref{lem:MNori(X)_colim}, applied to the filtered poset of countable subfields of $\C$;
	the object $M_k$ is uniquely determined up to canonical natural isomorphism (as a consequence of \Cref{prop:MNoriLoc_inv}).
	From now on, we tacitly identify $X$ with $X_k \times_k \C$, and similarly for the associated complex-analytic spaces (as in the discussion at the beginning of \Cref{subsect:top-inv}).
	Applying \Cref{cor:Gmot_inv} to the field extension $\C/k$, for every $x \in X(\C)$ we get a canonical isomorphism
	\begin{equation*}
		\Gmot(M,x) \xrightarrow{\sim} \Gmot(M_k,x).
	\end{equation*}
	We have to show that $\MotLocus(M)$ is a union of subsets of the form $S(\C)$, where $S = S_k \times_k \C$ with $S_k$ a strict closed subvariety of $X_k$;
	if this is the case, it is automatically a countable union (because $k$ is countable, by assumption).
	
	In the first place, we claim that every point $z \in X(\C)$ which is dense for the Zariski topology of $X_k$ lies outside $\MotLocus(M)$.
	To this end, let $\Sigma_z \colon k(X_k) \hookrightarrow \C$ denote the corresponding complex embedding (as in \Cref{constr:Sigma-z}), and choose a complex embedding $\overline{\Sigma}_z \colon \overline{k(X_k)} \hookrightarrow \C$ extending $\Sigma_z$.
	By construction, we have a commutative diagram of monoidal exact functors
	\begin{equation*}
		\begin{tikzcd}
			\MNoriLoc(X_k) \arrow{r} \arrow{d}{\eta_{X_k}^{\dagger}} & 
			\MNoriLoc(X) \arrow{r}{z^{\dagger}} & 
			\MNori(\C) \\
			\MNoriLoc(k(X_k)) \arrow[equal]{r} &
			\MNori_{\Sigma_z}(k(X_k)) \arrow{r} &
			\MNori_{\overline{\Sigma}_z}(\overline{k(X_k)}), \arrow{u}
		\end{tikzcd}
	\end{equation*}
	compatible with the fibre functor at $z$.
	Let $N \in \MNori_{\Sigma_z}(k(X_k))$ denote the object corresponding to the generic motivic local system $\eta_{X_k}^{\dagger} M_k \in \MNoriLoc(k(X_k))$ under \Cref{prop:MNori_sigma=MNori_Sigma}, and write $\overline{N}$ for its image in $\MNori_{\overline{\Sigma}_z}(\overline{k(X_k)})$.
	Under Tannaka duality, we obtain the commutative diagram of motivic Galois groups
	\begin{equation*}
		\begin{tikzcd}
			\Gmot(M_k,z) &
			\Gmot(M,z) \arrow{l}{\sim} &
			\Gmot(z^{\dagger} M) \arrow{l} \arrow{d}{\sim} \\
			\Gmot(\eta_{X_k}^{\dagger} M_k, z) \arrow{u}{\sim} &
			\Gmot(N; \Sigma_z) \arrow[equal]{l} &
			\Gmot(\overline{N}; \overline{\Sigma}_z). \arrow{l}
		\end{tikzcd}
	\end{equation*}
	Here, the right-most vertical arrow is an isomorphism by \Cref{cor:Gmot_inv} applied to the field extension $\C/\overline{k(X_k)}$, and the three isomorphisms in the left part of the diagram (given by \Cref{cor:Gmot_inv}, \Cref{cor:Gmot_generic}, and \Cref{cor:Gmot_sigma=Sigma}) respect the Artin quotients.
	As a consequence, the closed immersion $\Gmot(z^{\dagger} M) \hookrightarrow \Gmot^{\odot}(M,z)$ is an isomorphism if and only if the sequence
	\begin{equation*}
		1 \to \Gmot(\overline{N};\overline{\Sigma}_z) \to \Gmot(N;\Sigma_z) \to \Gmot^{\Artin}(N;\Sigma_z) \to 1
	\end{equation*}
	is exact, which is indeed the case (by \Cref{cor:ses_Gmot-Artin}).
	This proves the claim.
	
	It follows that $\MotLocus(M)$ is contained inside the union of all subsets of $X(\C)$ of the form $S(\C)$, where $S = S_k \times_k \C$ with $S_k$ a strict closed subvariety of $X_k$.  
	It remains to show that $\MotLocus(M)$ equals the union of some of these subsets $S(\C)$.
	To this end, fix an irreducible, strict closed subvariety $S_k \subset X_k$, set $S := S_k \times_k \C$, and choose a point $t \in S(\C)$ which is dense for the Zariski topology of $S_k$.
	We are going to show that $S(\C) \subset \MotLocus(M)$ as soon as $t \in \MotLocus(M)$;
	since every point of $X(\C)$ is Zariski-dense inside a unique irreducible closed subvariety of $X_k$, this will conclude the proof.
	
	Suppose first that $S_k$ is smooth over $k$, and let $i \colon S \hookrightarrow X$ denote the associated closed immersion.
	Then, the closed immersion $\Gmot(t^{\dagger} M) \hookrightarrow \Gmot^{\odot}(M,t)$ factors as
	\begin{equation*}
		\Gmot(t^{\dagger} M) \hookrightarrow \Gmot^{\odot}(i^{\dagger} M,t) \hookrightarrow \Gmot^{\odot}(M,t).
	\end{equation*}
	But the first arrow is already known to be an isomorphism (thanks to the above claim, applied to the object $i^{\dagger} M \in \MNoriLoc(S)$ with respect to the point $t$).
	Hence, we have $t \in \MotLocus(M)$ if and only if the closed immersion
	\begin{equation*}
		\Gmot^{\odot}(i^{\dagger} M,t) \hookrightarrow \Gmot^{\odot}(M,t) 
	\end{equation*}
	is not an isomorphism.
	If this is the case, the same conclusion holds for all point $s \in S(\C)$, which in turn implies that all such points belong to $\MotLocus(M)$, as wanted.
	
	If $S_k$ is not smooth, let $Z_k$ denote its singular locus.
	By Hironaka's work on Embedded Resolution of Singularities \cite{Hir64}, there exists a proper birational morphism $p \colon X'_k \to X_k$ from a smooth $k$-variety $X'_k$ such that the Zariski-closure $S'_k$ of $p^{-1}(S_k \setminus Z_k)$ inside $X'_k$ is smooth and the induced morphism $S'_k \to S_k$ restricts to an isomorphism over $S_k \setminus Z_k$.
	As usual, set $Z := Z_k \times_k \C$, $X' := X'_k \times_k \C$, and $S' := S'_k \times_k \C$.
	Note that the point $t$ lies outside $Z(\C)$ (being a generic point of $S_k$), and so it has a unique pre-image $t' \in S'(\C)$, which is automatically dense for the Zariski topology of $S'_k$ (by construction).
	Then, we have the chain of equivalences
	\begin{equation*}
		t \in \MotLocus(M) \iff t' \in \MotLocus(p^* M) \iff S'(\C) \subset \MotLocus(p^* M) \iff S(\C) \subset \MotLocus(M),
	\end{equation*}
	where the first and last passages follow from \Cref{lem:MotLocus_birat} while the middle passage follows from the smooth case.
	This concludes the proof.
\end{proof}

\begin{rem}\label[rem]{rem:MotLocus_closure}
	As a consequence of the theorem, the locus $\MotLocus(M)$ can never contain a non-empty analytic open subset of $X(\C)$.
	The proof also shows that, if $S$ is an irreducible closed subvariety of $X$ such that $\MotLocus(M) \supset U(\C)$ for some non-empty Zariski open $U \subset S$, then $\MotLocus(M) \supset S(\C)$.
	This can be also deduced directly from the fact that $\MotLocus(M)$ is a countable union of closed subvarieties.
\end{rem}

Recall that, if $M$ is an Artin motivic local system, then for each point $x \in X(\C)$ the motive $x^{\dagger} M \in \MNori(\C)$ is trivial (being an Artin motive).
As a consequence of our main result, the converse holds as well:
\begin{cor}\label[cor]{cor:Artin=trivial_stalk}
	A motivic local system $M \in \MNoriLoc(X)$ is Artin as soon as the motive $x^{\dagger} M \in \MNori(\C)$ is trivial for all $x \in X(\C)$.
\end{cor}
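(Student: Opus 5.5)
The idea is to use \Cref{thm:MotLocus} to produce one point outside the exceptional locus, and then read off from its trivial stalk that $\Gmot^{\odot}(M,x)$ is trivial. By \Cref{thm:MotLocus}, the locus $\MotLocus(M)$ is a countable union of subsets $S(\C)$ with $S \subsetneq X$ a strict closed subvariety. A countable union of proper Zariski-closed subsets cannot exhaust $X(\C)$; for instance, by the Baire category theorem, each $S(\C)$ is nowhere dense in the analytic topology (see also \Cref{rem:MotLocus_closure}). Concretely, as in the proof of \Cref{thm:MotLocus}, I will choose a countable algebraically closed $k$ over which $M$ is defined, and take a point $z$ that is Zariski-dense for $X_k$; such a point exists since $\C$ has infinite transcendence degree over $k$. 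Either way, this gives a point $x \in X(\C) \setminus \MotLocus(M)$.

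At such a point the closed immersion $\Gmot(x^{\dagger} M) \hookrightarrow \Gmot^{\odot}(M,x)$ is an isomorphism. The hypothesis says $x^{\dagger} M$ is trivial, so $\Gmot(x^{\dagger} M)=1$. Hence $\Gmot^{\odot}(M,x)=1$, i.e.\ the surjection $\Gmot(M,x) \twoheadrightarrow \Gmot^{\Artin}(M,x)$ is an isomorphism.

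It remains to translate this back into categorical terms. Under Tannaka duality, $\Gmot(M,x) \to \Gmot^{\Artin}(M,x)$ corresponds to the inclusion $\langle M \rangle^{\otimes}_{\Artin} \subset \langle M \rangle^{\otimes}$, compatibly with the fibre functors at $x$. An isomorphism of Tannaka duals means this inclusion of Tannakian subcategories is an equivalence. Since both subcategories are strictly full, the inclusion is then an equality, so $M \in \langle M \rangle^{\otimes}_{\Artin} \subset \MNoriLoc^{\Artin}(X)$.

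The only step needing care is the existence of a non-exceptional point. Everything else is formal Tannakian bookkeeping, and I expect no real obstacle once \Cref{thm:MotLocus} is available.
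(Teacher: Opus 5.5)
Your proof is correct and follows the paper's argument: \Cref{thm:MotLocus} gives a point $x \notin \MotLocus(M)$, and triviality of $x^{\dagger} M$ then forces $\Gmot^{\odot}(M,x)=1$, which means $M$ is Artin. The paper phrases this contrapositively; your justification that $\MotLocus(M)\neq X(\C)$ and your Tannakian translation of $\Gmot^{\odot}(M,x)=1$ only spell out steps the paper leaves implicit.
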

\begin{proof}
	Note that $M$ is Artin if and only if $\Gmot^{\odot}(M,x) = 1$ for some (or equivalently, for all) $x \in X(\C)$.
	Hence, if $M$ is not Artin, the motive $x^{\dagger} M$ is not trivial whenever $x \notin \MotLocus(M)$.
	Since $\MotLocus(M) \neq X(\C)$ (by \Cref{thm:MotLocus}), the conclusion follows.  
\end{proof}

%The structural theorem 
%\begin{rem}
%	The main theorem has a 
%	Let $p \colon X' \to X$ be a dominant morphism between smooth connected complex varieties such that the function field extension $\C(X')/\C(X)$ is geometrically integral.
%	Then, for every $M \in \MNoriLoc(X)$, the functor
%	\begin{equation*}
%		\langle M \rangle^{\otimes}_{\Artin} \to \langle f^* M \rangle^{\otimes}_{\Artin}
%	\end{equation*}
%\end{rem}

%As a consequence of our main result, 
%In fact, the following more precise statement holds:
%\begin{cor}\label{cor:MotLocus_closure}
%	If $S$ is an irreducible, strict closed subvariety of $X$ such that $\MotLocus(M) \supset U(\C)$ for some non-empty Zariski open $U \subset S$, then $\MotLocus(M) \supset S(\C)$.
%\end{cor}
%\begin{proof}
%	Since the conclusion trivially holds when $S = \Spec(\C)$, we may assume that $\dim(S) \geq 1$.
%	By \Cref{thm:MotLocus}, we can write
%	\begin{equation*}
%		\MotLocus(M) = \bigcup_{i \in I} S_i(\C)
%	\end{equation*}
%	with $I$ a countable index set and each $S_i$ a strict closed subvariety of $X$.
%	This yields the inclusion
%	\begin{equation*}
%		U(\C) \subset S(\C) \cap \MotLocus(M)  = \bigcup_{i \in I} (S \cap S_i)(\C),
%	\end{equation*}
%	whence the equality
%	\begin{equation*}
%		U(\C) = \bigcup_{i \in I} (U \cap S_i)(\C).
%	\end{equation*}
%	Since a positive-dimensional, irreducible complex variety cannot be contained into a countable union of strict closed subvarieties, we must have $U \subset S_i$ for some $i \in I$.
%	But then, we must have $S \subset S_i$ as well (since $S_i$ is closed).
%\end{proof}

\subsection{Stability under Galois conjugation}\label{subsect:MotLocus-Gal}

Throughout this subsection, we fix a subfield $k \subset \C$, and we let $\overline{k} \subset \C$ denote its algebraic closure.
We work over a smooth, connected complex variety $X$ admitting a model $X_{\overline{k}}$ over $\overline{k}$.

Let $M \in \MNoriLoc(X)$ be a motivic local system which is isomorphic to the image of an object $M_{\overline{k}} \in \MNoriLoc(X_{\overline{k}})$ under the base-change functor $\MNoriLoc(X_{\overline{k}}) \to \MNoriLoc(X)$.
By \Cref{thm:MotLocus}, the exceptional locus $\MotLocus(M)$ is then defined over $\overline{k}$.
From an arithmetic viewpoint, it is natural to wonder what changes when $\overline{k}$ gets replaced by $k$.
This is the content of the following result: 
\begin{prop}\label[prop]{prop:MotLocus-Gal}
	Assume that $X$ admits a model $X_k$ over $k$ such that $M$ belongs to the essential image of $\MNoriLoc(X_k)$.
	Then, the $\Gal(\overline{k}/k)$-action on $X$ induced by the Galois action on $X_{\overline{k}}$ stabilizes the locus $\MotLocus(M)$.
\end{prop}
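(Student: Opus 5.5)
The plan is to reduce the statement to a purely Tannakian criterion on the closed subvarieties defined over $\overline{k}$, and then to transport that criterion along Galois conjugation using the functoriality of \Cref{constr:f^*-Gal}.

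First, I would make the statement precise and reduce it to a statement about subvarieties. By \Cref{lem:MNori(X)_colim}, I may shrink $k$ to a countable subfield $k_0 \subset k$ over which $X_k$ and the model of $M$ are still defined. Since $\Gal(\overline{k}/k) \to \Gal(\overline{k_0}/k_0)$ is compatible with the actions, it suffices to treat $k_0$, so assume $k$ countable. By \Cref{thm:MotLocus} and its proof, $\MotLocus(M)$ is the union of those $S(\C)$, with $S = S_{\overline{k}} \times_{\overline{k}} \C$ and $S_{\overline{k}} \subset X_{\overline{k}}$ irreducible strict closed, such that some (equivalently any) point $t \in S(\C)$ Zariski-dense in $S_{\overline{k}}$ lies in $\MotLocus(M)$. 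Moreover, after resolving singularities via \Cref{lem:MotLocus_birat}, this happens if and only if the closed immersion $\Gmot^{\odot}(i^{\dagger}M,t) \hookrightarrow \Gmot^{\odot}(M,t)$ is not an isomorphism. Hence it suffices to show the following: for every $\tau \in \Gal(\overline{k}/k)$, the subvariety $S_{\overline{k}}$ has this property if and only if its conjugate $S_{\overline{k}}^{\tau}$ does. In the singular case, one uses the conjugate resolution $(X'_{\overline{k}})^{\tau}$.

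Second, I would make the criterion independent of base points and complex embeddings. The inclusion $\Gmot^{\odot}(i^{\dagger}M,t) \subset \Gmot^{\odot}(M,t)$ is an equality if and only if the restriction functor
\begin{equation*}
i^{\dagger} \colon \langle M \rangle^{\otimes} \to \langle i^{\dagger} M \rangle^{\otimes}
\end{equation*}
satisfies two conditions, via \cite[Prop.~2.21]{DM82}. First, every object of $\langle M\rangle^{\otimes}$ becoming trivial on $S$ lies in $\langle M\rangle^{\otimes}_{\Artin}$. Second, $i^{\dagger}$ is compatible with the Artin parts, i.e.\ the induced map of finite quotients is injective. Both are properties of the tensor functor between the abstract Tannakian categories, and do not involve the fibre functor at $t$. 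By \Cref{cor:Gmot_inv}, this reformulation can equally be read over $\overline{k}$, namely for $M_{\overline{k}}$ and $i_{\overline{k}} \colon S_{\overline{k}} \hookrightarrow X_{\overline{k}}$.

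Third, I would transport along $\tau$. The automorphism $\tau$ induces a $k$-automorphism $\tau_X$ of the scheme $X_{\overline{k}}$ carrying $S_{\overline{k}}$ to $S^{\tau}_{\overline{k}}$ (or its inverse, depending on conventions). By \Cref{constr:f^*-Gal}, this gives a monoidal equivalence $\tau_X^{\dagger}$ on $\MNoriLoc(X_{\overline{k}})$ and on $\MNoriLoc(S_{\overline{k}})$, compatible with $i^{\dagger}$ and with Artin subcategories. Here one uses that Nori categories depend only on the scheme up to canonical equivalence, by \cite[Prop.~6.11]{IM24}, so changing the embedding $\sigma$ to $\sigma\circ\tau^{-1}$ is harmless. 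Since $M_{\overline{k}}$ is base-changed from $X_k$, we have $\tau_X^{\dagger}M_{\overline{k}} \cong M_{\overline{k}}$. The equivalence therefore identifies the functor $\langle M_{\overline{k}}\rangle^{\otimes} \to \langle i^{\dagger}M_{\overline{k}}\rangle^{\otimes}$ with the corresponding functor for $S^{\tau}_{\overline{k}}$, and so preserves the intrinsic criterion of the second step. This concludes the argument.

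The main obstacle I expect is this last step. One must check that the equivalences of \Cref{constr:f^*-Gal}, which are not $\C$-linear and change the Betti realization, really preserve both the Artin subcategories and the Tannakian criterion. This is why I insist on formulating the criterion without fibre functors. If that proves awkward, I would instead choose a point $t$ and its conjugate $\tau(t)$ under an extension of $\tau$ to $\C$. The fibre functors would then differ only through an isomorphism of Betti realizations for conjugate embeddings, and I would compare the motivic Galois groups as groups up to isomorphism, which suffices for the criterion.
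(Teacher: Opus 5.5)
\textbf{There is a genuine gap, in your second step: the Tannakian reformulation there is false.} Write $G=\Gmot(M,t)$ and $H=\Gmot(i^{\dagger}M,t)\subseteq G$.

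An object $V\in\langle M\rangle^{\otimes}$ becomes trivial on $S$ exactly when $H$ acts trivially on its fibre. Since the kernel of $G$ acting on that fibre is normal, this happens exactly when the normal closure $N$ of $H$ in $G$ acts trivially. So your condition (1) only says $G^{\odot}\subseteq N$. By contrast, $\Gmot^{\odot}(i^{\dagger}M,t)=\Gmot^{\odot}(M,t)$ amounts to $G^{\odot}\subseteq H$ itself, together with your condition (2). Your two conditions are therefore necessary but not sufficient.

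Here is a concrete counterexample. Take the Legendre family over $\mathbb{P}^1\setminus\{0,1,\infty\}$, let $M$ be its $H^1$, and let $S=\{s\}$ be a CM point.
\begin{enumerate}
\item We have $G=\mathrm{GL}_2$: it contains the monodromy $\mathrm{SL}_2$, preserves the motivic polarization up to scalars, and surjects onto $\Gmot(\Q(-1))$. Hence $G$ is connected and $G^{\odot}=G$.
\item $H$ is the $2$-dimensional CM torus: it contains the Mumford--Tate group via the Hodge realization, and it centralizes the motivic action of $\mathrm{End}(E_s)\otimes\Q$.
\item The normal closure of this maximal torus is all of $\mathrm{GL}_2$, so (1) holds.
\item (2) holds trivially, since Artin motives over an algebraically closed field are trivial.
\end{enumerate}
Yet $s\in\MotLocus(M)$. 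So the property you transport along $\tau$ in your third step is not the one cutting out $\MotLocus(M)$. A criterion phrased only through objects becoming trivial sees just the normal closure of $H$, never $H$ itself.

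\textbf{How to repair it.} Drop the reformulation and keep the group-theoretic condition from your first step. Your third-step equivalences still do the work, once you adapt \Cref{constr:f^*-Gal} to the isomorphism $S_{\overline{k}}\to S^{\tau}_{\overline{k}}$ (and to the conjugate resolution in the singular case). Using $\tau^{*}M\cong M$, they identify the restriction functor for $S$, together with its Artin subcategories, with the one for $S^{\tau}$. Whether the induced closed immersion of $\odot$-subgroups is an isomorphism does not depend on the fibre functor: two fibre functors become isomorphic over an extension of $\Q$, and being an isomorphism can be checked there.

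\textbf{Comparison with the paper.} With this repair your subvariety detour becomes unnecessary. The paper argues pointwise. \Cref{cor:f^*-stalks} identifies $\Gmot(x^{\dagger}f^{*}M)\hookrightarrow\Gmot^{\odot}(f^{*}M,x)$ with $\Gmot(f(x)^{\dagger}M)\hookrightarrow\Gmot^{\odot}(M,f(x))$, and the isomorphism $f^{*}M\cong M$ comes from $(ef)^{*}=e^{*}$ on a model over $k$. This is essentially your fallback. It needs neither \Cref{thm:MotLocus}, nor countable descent, Zariski-dense points, or resolution of singularities.
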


%Note that, if $k$ is not itself algebraically closed, in general $M$ might have several pairwise non-isomorphic models over $k$ (see \Cref{ex:non-fullness}).
%However, up to canonical isomorphism they all give rise to the same model over $\overline{k}$ (as a consequence of \Cref{prop:MNoriLoc_inv}).
%Intuitively, the proposition follows from the $\Gal(\overline{k}/k)$-invariance of the latter object.

Before getting to the proof, we need to comment on the $\Gal(\overline{k}/k)$-action on motivic local systems defined over $k$.

\begin{constr}\label[constr]{constr:f^*-Gal}
	Given a $k$-variety $X_k$, set $X_{\overline{k}} := X_k \times_k \overline{k}$, and let $e \colon X_{\overline{k}} \to X_k$ denote the base-change morphism.
	Moreover, fix an element $\varphi \in \Gal(\overline{k}/k)$, let $f \colon X_{\overline{k}} \xrightarrow{\sim} X_{\overline{k}}$ denote the corresponding $k$-automorphism. 
	We construct a canonical monoidal exact functor
	\begin{equation*}
		f^* \colon \MNoriLoc(X_{\overline{k}}) \to \MNoriLoc(X_{\overline{k}}),
	\end{equation*}
	and we show that it restricts to
	\begin{equation*}
		f^* \colon \MNoriLoc^{\Artin}(X_{\overline{k}}) \to \MNoriLoc^{\Artin}(X_{\overline{k}}).
	\end{equation*}
	We want to obtain it as the restriction of an exact functor
	\begin{equation*}
		f^* \colon \MNori(X_{\overline{k}}) \to \MNori(X_{\overline{k}})
	\end{equation*}
	rendering the diagram
	\begin{equation*}
		\begin{tikzcd}
			\DAct(X_{\overline{k}}) \arrow{rr}{f^*} \arrow{d} &&
			\DAct(X_{\overline{k}}) \arrow{d} \\
			\MNori(X_{\overline{k}}) \arrow{rr}{f^*} &&
			\MNori(X_{\overline{k}})
		\end{tikzcd}
	\end{equation*}
	strictly commutative, so we start by constructing the latter.
	If such a functor exists, it is uniquely determined by this commutativity property (by \cite[Rmk.~2.7(1)]{Ter24UAF}): 
	this implies that it is a self-equivalence, with canonical quasi-inverse $(f^{-1})^*$;
	the same then holds for its restriction to motivic local systems.
	
	By \cite[Prop.~2.5]{Ter24UAF}, the sought-after exact functor exists if and only if the self-equivalence of the abelian hull $f^* \colon \AbHull(\DAct(X_{\overline{k}})) \xrightarrow{\sim} \AbHull(\DAct(X_{\overline{k}}))$ stabilizes the kernel of the exact functor $\beta_{\overline{\sigma},X_{\overline{k}}}^+ \colon \AbHull(\DAct(X_{\overline{k}})) \to \Perv_{\overline{\sigma}}(X_{\overline{k}})$.
	Since this coincides with the kernel of the $\ell$-adic exact functor $\beta_{\ell,X_{\overline{k}}}^+ \colon \AbHull(\DAct(X_{\overline{k}})) \to \Perv_{\ell}(X_{\overline{k}})$ (see the proof of \cite[Prop.~6.11]{IM24}), it suffices to prove the stability of the latter.
	But this follows from the existence of the commutative diagram
	\begin{equation*}
		\begin{tikzcd}
			\DAct(X_{\overline{k}}) \arrow{rr}{f^*} \arrow{d}{\Real_{\ell,X_{\overline{k}}}} &&
			\DAct(X_{\overline{k}}) \arrow{d}{\Real_{\ell,X_{\overline{k}}}} \\
			D^b_{\ell}(X_{\overline{k}}) \arrow{rr}{f^*} \arrow{d}{{^p H^0}} &&
			D^b_{\ell}(X_{\overline{k}}) \arrow{d}{{^p H^0}} \\
			\Perv_{\ell}(X_{\overline{k}}) \arrow{rr}{f^*} &&
			\Perv_{\ell}(X_{\overline{k}})
		\end{tikzcd}
	\end{equation*} 
	witnessing the compatibility of the $\ell$-adic realization with inverse image functors (see \cite[Thm.~6.9]{Ayo14Et}).
	This defines the sought-after exact functor on motivic perverse sheaves.
	Since this functor renders the diagram
	\begin{equation*}
		\begin{tikzcd}
			\MNori(X_{\overline{k}}) \arrow{rr}{f^*} \arrow{d}{\iota_{\ell,X}} &&
			\MNori(X_{\overline{k}}) \arrow{d}{\iota_{\ell,X}} \\
			\Perv_{\ell}(X_{\overline{k}}) \arrow{rr}{f^*} &&
			\Perv_{\ell}(X_{\overline{k}})
		\end{tikzcd}
	\end{equation*}
	commutative up to natural isomorphism, it restricts to an exact functor between the subcategories of motivic local systems.
	By examining the construction of the monoidal structure on Nori motivic sheaves (see \cite[\S\S~2, 4]{Ter24Nori}), one sees that the latter is canonically monoidal, as wanted.
	Moreover, for every finite étale cover $W \to X_{\overline{k}}$, we have a canonical isomorphism in $\MNoriLoc(X_{\overline{k}})$
	\begin{equation*}
		f^* \Q_{W/X_{\overline{k}}} \xrightarrow{\sim} \Q_{W/X_{\overline{k}}}.
	\end{equation*}
	This gives the desired restriction to Artin motivic local systems.
	
	Now set $X := X_k \times_k \C = X_{\overline{k}} \times_{\overline{k}} \C$, and write $f \colon X \to X$ also for the induced scheme automorphism (which is not a $\C$-automorphism unless $\varphi = 1$). 
	Arguing as above, we obtain a canonical monoidal exact functor
	\begin{equation*}
		f^* \colon \MNoriLoc(X) \to \MNoriLoc(X)
	\end{equation*}
	which restricts to
	\begin{equation*}
		f^* \colon \MNoriLoc^{\Artin}(X) \to \MNoriLoc^{\Artin}(X),
	\end{equation*}
	compatibly with the previous ones under the base-change functor $\MNoriLoc(X_{\overline{k}}) \to \MNoriLoc(X)$.
	Clearly, the whole construction is compatible with restriction to Zariski opens of $X_{\overline{k}}$.
    \qed
\end{constr}

\begin{lem}\label[lem]{lem:f^*-stalks}
	Keep the notation and assumptions of \Cref{constr:f^*-Gal}.
	Then, for every point $x \in X(\C)$, the diagram of monoidal functors
	\begin{equation*}
		\begin{tikzcd}
			\MNoriLoc(X) \arrow{rr}{f^*} \arrow{dr}{f(x)^{\dagger}} &&
			\MNoriLoc(X) \arrow{dl}{x^{\dagger}} \\
			&
			\MNori(\C)
		\end{tikzcd}
	\end{equation*}
	commutes up to monoidal natural isomorphism.
\end{lem}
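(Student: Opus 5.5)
The plan is to realize both stalk functors as universal abelian factorizations of functors out of $\DAct(X)$, and to compare them there. Regard the point $x \in X(\C)$ as a morphism of schemes $x \colon \Spec(\C) \to X$ over $\C$. Then $f \circ x \colon \Spec(\C) \to X$ is a morphism of schemes, but it is not a $\C$-morphism. It factors as
\begin{equation*}
	\Spec(\C) \xrightarrow{c} \Spec(\C) \xrightarrow{f(x)} X,
\end{equation*}
where $c$ is the automorphism of $\Spec(\C)$ given by an extension $\tilde{\varphi} \in \mathrm{Aut}(\C/k)$ of $\varphi$. Indeed, $f$ is the base change of $\varphi$ along the structure map of $X_{\overline{k}}$, and we extend to $\C$ suitably.

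This is awkward, so I would rather avoid $c$. Recall that $f$ on $X = X_{\overline{k}} \times_{\overline{k}} \C$ is $f_{\overline{k}} \times \mathrm{id}_{\C}$; equivalently, it is the automorphism $\mathrm{id}_{X_k} \times (\varphi \times \mathrm{id})$. With this description $f$ is a $\C$-morphism after all, once we identify $\overline{k} \otimes_k \C$ with the appropriate structure. So the first step is to fix conventions so that $f \circ x = f(x)$ holds as $\C$-points, with $f(x)$ defined precisely by this composite. Then $x^{\dagger} \circ f^*$ and $(f \circ x)^{\dagger}$ agree on $\DAct(X)$ via the canonical connection isomorphism $x^* f^* \simeq (f x)^*$ of the six-functor formalism.

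The second step is to transport this to Nori motives. Both functors $x^{\dagger} \circ f^*$ and $f(x)^{\dagger}$ are exact functors $\MNori(X) \to \MNori(\C)$. Each is compatible with $\pi$: for $f^*$ this holds by strict commutativity in \Cref{constr:f^*-Gal}, and for the stalk functors by \Cref{thm:Nori-6ff}. By the uniqueness part of the universal property of universal abelian factorizations (\cite[Rmk.~2.7(1)]{Ter24UAF}), a natural isomorphism between the composites $\DAct(X) \to \MNori(\C)$ induces a unique natural isomorphism between the exact functors on $\MNori(X)$. Applying this to the isomorphism $\pi_{\C} x^* f^* \simeq \pi_{\C}(fx)^*$ yields the desired natural isomorphism. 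It then restricts to $\MNoriLoc(X)$ with the shifts $[-\dim X]$, which are harmless since $f$ preserves dimension.

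The third step, which I expect to be the main obstacle, is monoidality. The monoidal structure on $f^*$ was obtained in \Cref{constr:f^*-Gal} by inspecting the construction of the tensor structure via external products and diagonal pullbacks. I would argue the same way. The connection isomorphisms in $\DAct$ are compatible with the monoidal structures of $x^*$, $f^*$ and $(fx)^*$, because these are symmetric monoidal functors in a six-functor formalism and connection isomorphisms are monoidal. Applying the universal property again, now to the additive subcategories $(\Bti^*)^{-1}(\Perv)$ used in the proof of \Cref{prop:MNori_sigma=MNori_Sigma}, together with the uniqueness of induced natural transformations, should show that the induced isomorphism respects the tensor constraints. The subtle point is that $f$ is not a morphism of $\C$-varieties, so the Betti realization must be handled through the $\ell$-adic realization, exactly as in \Cref{constr:f^*-Gal}. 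Compatibility of the $\ell$-adic realization with pullbacks \cite[Thm.~6.9]{Ayo14Et} then gives the needed commutativity of kernels at each stage.
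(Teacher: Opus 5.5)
Your Step~1 cannot be carried out, and the factor $c$ you found is a genuine obstruction, not a matter of conventions.

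For $\varphi\neq 1$, the automorphism $f$ of $X=X_k\times_k\C$ only exists after choosing $\tilde{\varphi}\in\mathrm{Aut}(\C/k)$ extending $\varphi$, and it lies over $c:=\Spec(\tilde{\varphi})$. It is not $\C$-linear; the paper says so, and you say so yourself in Step~3. Moreover, $\mathrm{id}_{X_k}\times(\varphi\times\mathrm{id})$ is an automorphism of $X_k\times_k\Spec(\overline{k}\otimes_k\C)$, which is not $X$, since $\overline{k}\otimes_k\C$ is not a field. Hence $f\circ x=f(x)\circ c$, and what the argument can give is $x^{\dagger}f^*\simeq c^*f(x)^{\dagger}$. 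Here $c^*$ is the tensor autoequivalence of $\MNori(\C)$ (built as in \Cref{constr:f^*-Gal}) sending the motive of a variety to that of its $\tilde{\varphi}$-conjugate.

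This twist cannot be dropped. If $M$ comes from $X_k$, then $f^*M\simeq M$, so the untwisted statement would force $x^{\dagger}M\simeq f(x)^{\dagger}M$. Now take $M$ to be the $H^1$ of the Legendre family over $\mathbb{P}^1_{\Q}\setminus\{0,1,\infty\}$, take $\tilde{\varphi}$ to be complex conjugation, and take $x$ such that $j(E_x)$ and $\overline{j(E_x)}$ are algebraically independent. Then $E_x$ and $E_{\bar{x}}$ are not isogenous, so the two stalks already have non-isomorphic Hodge realizations.

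The paper's proof passes over the same point when it appeals to a Voevodsky-level isomorphism $f_*x_*\simeq f(x)_*$; the correct one is $f_*x_*\simeq f(x)_*c_*$. The twisted statement is what can actually be proved. It suffices for \Cref{prop:MotLocus-Gal,prop:WeightLocus-Gal}, because a tensor autoequivalence of $\MNori(\C)$ preserves the unit, semisimplicity and splittings. Hence it also preserves whether $\Gmot(x^{\dagger}M)\hookrightarrow\Gmot^{\odot}(M,x)$ is an isomorphism.

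Independently, Step~2 misapplies the lifting principle. The uniqueness statement of \cite[Rmk.~2.7(1)]{Ter24UAF} concerns exact functors out of $\MNori(X)$ induced from homological functors on $\DAct(X)$. Stalk functors are not of this kind:
\begin{itemize}
\item $x^{\dagger}=x^*[-\dim(X)]$ does not even send $\MNori(X)$ into $\MNori(\C)$, since a skyscraper at $x$ goes to a complex in degree $\dim(X)$;
\item $\pi_{\C}\circ x^*[-\dim(X)]$ is not of the form $F\circ\pi_X$ with $F$ exact, since $x^*$ does not commute with perverse truncation;
\item \Cref{thm:Nori-6ff} only provides $x^*$ on derived categories, and there $f^*$ is not part of the formalism, because $f$ is not a $\C$-morphism.
\end{itemize}

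The missing idea is the paper's: work on $D^b$ and pass to the right adjoints $f_*$, $x_*$, $f(x)_*$ (and $c_*$). These are induced by exact functors between the abelian categories, because $x$ is a closed immersion and $f$, $c$ are automorphisms, and these exact functors do come from the lifting principle. So the Voevodsky-level isomorphism lifts uniquely, and adjunction carries it back to inverse images. Monoidality is then checked as you propose in Step~3, but on these pushforwards: compatibility with external tensor products (external monoidality of pushforward along closed immersions), together with a comparison for the diagonal $\Delta_X$.
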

\begin{proof}
	We prove that, more generally, the diagram of monoidal triangulated functors
	\begin{equation*}
		\begin{tikzcd}
			D^b(\MNori(X)) \arrow{rr}{f^*} \arrow{dr}{f(x)^*} &&
			D^b(\MNori(X)) \arrow{dl}{x^*} \\
			& 
			D^b(\MNori(\C))
		\end{tikzcd}
	\end{equation*}
	commutes up to monoidal natural isomorphism.
	
	To begin with, let us ignore monoidality and construct a natural isomorphism filling this diagram.
	This is equivalent to giving a natural isomorphism filling the diagram of right-adjoint functors
	\begin{equation*}
		\begin{tikzcd}
			D^b(\MNori(X)) &&
			D^b(\MNori(X)) \arrow{ll}{f_*} \\
			&
			D^b(\MNori(\C)). \arrow{ul}{f(x)_*} \arrow{ur}{x_*}
		\end{tikzcd}
	\end{equation*}
	Since this diagram, in turn, is obtained from the diagram of exact functor
	\begin{equation*}
		\begin{tikzcd}
			\MNori(X) &&
			\MNori(X) \arrow{ll}{f_*} \\
			&
			\MNori(\C), \arrow{ul}{f(x)_*} \arrow{ur}{x_*}
		\end{tikzcd}
	\end{equation*}
	it suffices to give a natural isomorphism filling the latter.
	Since all the exact functors in question come from the lifting principles of universal abelian factorizations (see \cite[\S\S~2,3]{Ter24UAF}), such a natural isomorphism is induced by the analogous natural isomorphism on Voevodsky motivic sheaves.
	
	To conclude, we need to promote the natural isomorphism just obtained to a monoidal one.
	To this end, observe that the above construction is compatible with the external tensor product on Nori motivice perverse sheaves (as defined in \cite[\S~2]{Ter24Nori}):
	this essentially follows from the external monoidality of direct image functors under closed immersions.
	Therefore, in order to show that the construction is compatible with the internal tensor product, it suffices to check its compatibility with diagonal closed immersions.
	This essentially comes down to constructing a natural isomorphism filling the diagram of triangulated functors
	\begin{equation*}
		\begin{tikzcd}
			D^b(\MNori(X \times X)) \arrow{rr}{(f \times f)^*} \arrow{dr}{\Delta_X^*} &&
			D^b(\MNori(X \times X)) \arrow{dl}{\Delta_X^*} \\
			&
			D^b(\MNori(X)),
		\end{tikzcd}
	\end{equation*}
	where $\Delta_X \colon X \hookrightarrow X \times X$ denotes the diagonal embedding.
	And this can be done in the same way as for the previous natural isomorphism.
\end{proof}

\begin{cor}\label[cor]{cor:f^*-stalks}
	Keep to notation and assumptions of \Cref{lem:f^*-stalks}, and fix $M \in \MNoriLoc(X)$.
	Then, for every $x \in X(\C)$, we have a commutative diagram of motivic Galois groups
		\begin{equation*}
		\begin{tikzcd}
			\Gmot(x^{\dagger} f^* M) \arrow[hook]{r} \arrow{d}{\sim} &
			\Gmot(f^* M,x) \arrow{d}{\sim} \arrow[two heads]{r} &
			\Gmot^{\Artin}(f^* M,x) \arrow{d}{\sim} \\
			\Gmot(f(x)^{\dagger} M) \arrow[hook]{r} &
			\Gmot(M, f(x)) \arrow[two heads]{r} &
			\Gmot^{\Artin}(M, f(x)),
		\end{tikzcd}
	\end{equation*}
	in which all vertical arrows are isomorphisms.
\end{cor}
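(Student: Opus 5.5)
The whole diagram will be obtained by Tannaka duality from a single commutative diagram of neutralized Tannakian categories. The basic input is that $f^* \colon \MNoriLoc(X) \to \MNoriLoc(X)$ is a monoidal self-equivalence. By \Cref{constr:f^*-Gal} it has quasi-inverse $(f^{-1})^*$, and it restricts to a self-equivalence of $\MNoriLoc^{\Artin}(X)$, whose quasi-inverse is again the restriction of $(f^{-1})^*$. Two consequences follow. First, $f^*$ induces an equivalence $\langle M \rangle^{\otimes} \xrightarrow{\sim} \langle f^* M \rangle^{\otimes}$, since a monoidal equivalence carries the Tannakian subcategory generated by $M$ onto the one generated by $f^* M$. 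Second, $f^*$ carries $\langle M \rangle^{\otimes}_{\Artin}$ onto $\langle f^* M \rangle^{\otimes}_{\Artin}$: it preserves Artin objects in both directions and preserves the generated subcategories, so it preserves their intersection.

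Next I would fix the fibre functors. On $\langle f^* M \rangle^{\otimes}$ we use the fibre functor at $x$, and on $\langle M \rangle^{\otimes}$ the fibre functor at $f(x)$. By \Cref{lem:f^*-stalks}, there is a monoidal natural isomorphism $x^{\dagger} \circ f^* \simeq f(x)^{\dagger}$ of functors to $\MNori(\C)$. Composing with the forgetful functor $\iota_{\C}$ gives a monoidal isomorphism between the fibre functor at $f(x)$ and the fibre functor at $x$ precomposed with $f^*$. So $f^*$ is an equivalence of neutralized Tannakian categories $(\langle M \rangle^{\otimes}, f(x)) \xrightarrow{\sim} (\langle f^* M \rangle^{\otimes}, x)$. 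Its dual is an isomorphism $\Gmot(f^* M,x) \xrightarrow{\sim} \Gmot(M,f(x))$, which gives the middle vertical arrow. Restricting to Artin parts gives the right vertical isomorphism, and the right-hand square commutes because the quotient maps are dual to the inclusions $\langle - \rangle^{\otimes}_{\Artin} \subset \langle - \rangle^{\otimes}$, which $f^*$ respects.

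For the left column, the same natural isomorphism from \Cref{lem:f^*-stalks}, evaluated at $M$, gives an isomorphism of motives $x^{\dagger} f^* M \simeq f(x)^{\dagger} M$. Hence $\langle x^{\dagger} f^* M \rangle^{\otimes}$ and $\langle f(x)^{\dagger} M \rangle^{\otimes}$ are the same subcategory of $\MNori(\C)$, with the same fibre functor $\iota_{\C}$, and we get the left vertical isomorphism. The left square is dual to the square of functors
\[
\langle M \rangle^{\otimes} \xrightarrow{f^*} \langle f^* M \rangle^{\otimes} \xrightarrow{x^{\dagger}} \langle x^{\dagger} f^* M \rangle^{\otimes}
\quad\text{versus}\quad
\langle M \rangle^{\otimes} \xrightarrow{f(x)^{\dagger}} \langle f(x)^{\dagger} M \rangle^{\otimes}.
\]
This square commutes up to the monoidal isomorphism of \Cref{lem:f^*-stalks}, and that isomorphism is compatible with the trivializations of the fibre functors. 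Commutativity of the left square then follows from the functoriality of Tannaka duality.

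The only delicate point is this last compatibility. The isomorphism of \Cref{lem:f^*-stalks} must be monoidal, which the lemma provides, and it must be the same isomorphism used to identify the two fibre functors. That holds by construction, since both identifications come from the same natural isomorphism. Once this is checked, everything else is formal Tannakian bookkeeping.
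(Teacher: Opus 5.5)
Your proposal is correct and follows essentially the same route as the paper: $f^*$ induces an equivalence $\langle M \rangle^{\otimes} \xrightarrow{\sim} \langle f^* M \rangle^{\otimes}$ that restricts to the Artin parts (by \Cref{constr:f^*-Gal}), and \Cref{lem:f^*-stalks} intertwines the stalks at $x$ and $f(x)$ and the corresponding fibre functors, so the diagram follows by Tannaka duality. Your main addition is to spell out the bookkeeping the paper leaves implicit, namely that the same monoidal isomorphism both identifies the fibre functors and makes the left square commute.
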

\begin{proof}
	Indeed, by \Cref{lem:f^*-stalks}, the functor $f^*$ induces an equivalence of Tannakian categories $\langle M \rangle^{\otimes} \xrightarrow{\sim} \langle f^* M \rangle^{\otimes}$, which restricts to an analogous equivalence $\langle M \rangle^{\otimes}_{\Artin} \xrightarrow{\sim} \langle f^* M \rangle^{\otimes}_{\Artin}$ (by \Cref{constr:f^*-Gal}) and intertwines the stalks at $x$ and $f(x)$ as well as the corresponding fibre functors.
\end{proof}

\begin{proof}[Proof of \Cref{prop:MotLocus-Gal}]
	As in \Cref{constr:f^*-Gal}, fix an element $\varphi \in \Gal(\overline{k}/k)$, let $f \colon X_{\overline{k}} \xrightarrow{\sim} X_{\overline{k}}$ denote the corresponding $k$-automorphism, and write also $f \colon X \xrightarrow{\sim} X$ for the induced scheme automorphism.
	We have to prove that the permutation map on complex points $f \colon X(\C) \xrightarrow{\sim} X(\C)$ stabilizes the subset $\MotLocus(M)$.
	
	To this end, fix a point $x \in \MotLocus(M)$, and let us show that $f(x) \in \MotLocus(M)$ as well.
	From \Cref{cor:f^*-stalks}, we get the commutative diagram of motivic Galois groups
	\begin{equation*}
		\begin{tikzcd}
			\Gmot(x^{\dagger} f^* M) \arrow[hook]{r} \arrow{d}{\sim} &
			\Gmot^{\odot}(f^* M,x) \arrow{d}{\sim} \\
			\Gmot(f(x)^{\dagger} M) \arrow[hook]{r} &
			\Gmot^{\odot}(M,f(x)).
		\end{tikzcd}
	\end{equation*}
	In order to conclude the proof, it suffices to exhibit an isomorphism in $\MNoriLoc(X)$
	\begin{equation*}
		f^* M \xrightarrow{\sim} M.
	\end{equation*}
	Indeed, this will give the commutative diagram of motivic Galois groups
	\begin{equation*}
		\begin{tikzcd}
			\Gmot(x^{\dagger} f^* M) \arrow[hook]{r} \arrow{d}{\sim} &
			\Gmot^{\odot}(f^* M, x) \arrow{d}{\sim} \\
			\Gmot(x^{\dagger} M) \arrow{r}{\sim} &
			\Gmot^{\odot}(M,x),
		\end{tikzcd}
	\end{equation*}
	and the thesis will follow by comparing its lower row with the lower row of the previous diagram.
	
	To construct the sought-after isomorphism, we use the hypothesis that $M$ is defined over $k$.
	In detail, fix an object $M_k \in \MNoriLoc(X_k)$ such that $M$ is isomorphic to its image under the base-change functor $\MNoriLoc(X_k) \to \MNoriLoc(X)$;
	the object $M_k$ is not unique up to isomorphism (see \Cref{ex:non-fullness}), but its image in $\MNoriLoc(X_{\overline{k}})$ is so (as a consequence of \Cref{prop:MNoriLoc_inv}).
	Write $e \colon X_{\overline{k}} \to X_k$ for the base-change morphism, and write $e^* \colon \MNoriLoc(X_k) \to \MNoriLoc(X_{\overline{k}})$ for the corresponding base-change functor.
    By the uniqueness property in \Cref{constr:f^*-Gal}, we get a canonical natural monoidal isomorphism between functors $\MNori(X_k) \to \MNori(X_{\overline{k}})$
    \begin{equation*}
	    e^* = (ef)^*,
    \end{equation*}
    induced by the analogous natural isomorphism of functors $\DAct(X_k) \to \DAct(X_{\overline{k}})$.
    This induces a canonical isomorphism in $\MNoriLoc(X_{\overline{k}})$
    \begin{equation*}
    	f^* M_{\overline{k}} = f^* e^* M_k = (ef)^* M_k = e^* M_k = M_{\overline{k}}.
    \end{equation*}
    Since the functors $f^* \colon \MNoriLoc(X_{\overline{k}}) \to \MNoriLoc(X_{\overline{k}})$ and $f^* \colon \MNoriLoc(X) \to \MNoriLoc(X)$ are compatible under the base-change functor $\MNoriLoc(X_{\overline{k}}) \to \MNoriLoc(X)$ (as noted at the end of \Cref{constr:f^*-Gal}), this induces the sought-after isomorphism in $\MNoriLoc(X)$. 
\end{proof}

\section{The weight-splitting locus}\label{sect:WeightLocus}

Following the approach of the previous section, we study the splitting locus of the weight filtration on motivic local systems over complex varieties.
Our main results generalize to the splitting locus of arbitrary short exact sequences of motivic local systems.

\subsection{Recollections on the motivic weight filtration}

We start by reviewing the theory of motivic weights, following mostly \cite[\S~6]{IM24}.
This is modelled on the analogous theory for mixed $\ell$-adic complexes (see \cite{Hub97} and \cite{Mor24}).

To fix notation, let $k$ be a field of characteristic $0$ endowed with a complex embedding $\sigma \colon k \hookrightarrow \C$, and let $X$ be a $k$-variety.
The abelian category $\MNori_{\sigma}(X)$, defined in terms of the Betti realization (as at the beginning of \Cref{subsect:MNori(X)}), admits an alternative presentation in terms of the $\ell$-adic realization of Voevodsky motivic sheaves (by \cite[Prop.~6.11]{IM24}).
This determines a canonical faithful exact functor
\begin{equation*}
	\iota_{\ell,X} \colon \MNori_{\sigma}(X) \rightarrow \Perv_{\ell}^{\textup{mf}}(X)
\end{equation*}
into the abelian category of mixed filtered $\ell$-adic perverse sheaves (see \cite[Prop. 6.17]{IM24}).
Recall that every object $K \in \Perv_{\ell}^{\textup{mf}}(X)$ is endowed with a canonical weight filtration
\begin{equation*}
	0 \subset \dots \subset W_{\leq a-1} K \subset W_{\leq a} K \subset \dots \subset K,
\end{equation*}
uniquely characterized by the property that $W_{\leq a} K$ (resp. $K/W_{\leq a-1} K$) is of weight at most $a$ (resp. at least $a$) for all $a \in \Z$.

\begin{nota}
	For every $a \in \Z$, we let $\MNori_{\sigma}^{w \leq a}(X)$ (resp. $\MNori_{\sigma}^{w \geq a}(X)$) denote the full subcategory of $\MNori_{\sigma}(X)$ spanned by the objects $M$ such that $\iota_{\ell,X}(M)$ is of weight at most $a$ (resp. at least $a$).
	Moreover, we let $\MNori_{\sigma}^{w=a}(X)$ denote the intersection $\MNori_{\sigma}^{w \leq a}(X) \cap \MNori_{\sigma}^{w \geq a}(X)$.
\end{nota}

\begin{prop}[{\cite[\S\S~6.3-6.4]{Ara13}}; {\cite[Prop.~6.17]{IM24}}]
	Every object $M \in \MNori_{\sigma}(X)$ is endowed with a canonical weight filtration
	\begin{equation*}
		0 \subset \dots \subset W_{\leq a-1} M \subset W_{\leq a} M \subset \dots \subset M,
	\end{equation*}
	uniquely characterized by the property that $W_{\leq a} M \in \MNori_{\sigma}^{w \leq a}(X)$ and $M/W_{\leq a-1} M \in \MNori_{\sigma}^{w \geq a}(X)$ for all $a \in \Z$.
	Every morphism in $\MNori_{\sigma}(X)$ is strictly compatible with the weight filtrations.
\end{prop}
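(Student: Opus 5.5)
Taking \cite[Prop.~6.17]{IM24} as the black box, I would transport the weight filtration on mixed filtered $\ell$-adic perverse sheaves along the faithful exact functor $\iota_{\ell,X} \colon \MNori_{\sigma}(X) \to \Perv_{\ell}^{\textup{mf}}(X)$. The key steps are uniqueness, strictness, and existence, in that order, with existence the only substantial one.

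\emph{Uniqueness.} Suppose $W'$ is a second filtration with the same two properties. Then the composite $W_{\leq a} M \to M \to M/W'_{\leq a} M$ goes from an object of weight $\leq a$ to an object of weight $\geq a+1$. After applying $\iota_{\ell,X}$, this map vanishes, since there are no nonzero morphisms from weight $\leq a$ to weight $\geq a+1$ in $\Perv_{\ell}^{\textup{mf}}(X)$. By faithfulness the map vanishes in $\MNori_{\sigma}(X)$, so $W_{\leq a} M \subset W'_{\leq a} M$, and the reverse inclusion follows by symmetry.

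\emph{Strictness.} Let $u \colon M \to N$ be a morphism. The same vanishing argument shows that $u(W_{\leq a} M) \subset W_{\leq a} N$. Strictness can then be checked after applying the faithful exact functor $\iota_{\ell,X}$, because that functor preserves images and intersections of subobjects. Morphisms in $\Perv_{\ell}^{\textup{mf}}(X)$ are strict for weights, so $u$ is strict.

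\emph{Existence.} This is the main obstacle: I must show that the $\ell$-adic subobject $W_{\leq a}\iota_{\ell,X}(M)$ actually lifts to a subobject of $M$. My first attempt is the universal abelian factorization. Since $\MNori_{\sigma}(X)$ is a Serre quotient of $\AbHull(\DAct(X))$, every $M$ is a subquotient of some $\pi_{\sigma,X}(K)$ with $K \in \DAct(X)$. One then uses the motivic weight structure on $\DAct(X)$ (Bondarko/Hébert), whose Chow heart realizes to pure complexes. Via weight truncations, this shows that $\pi(K)$ admits a filtration by subobjects whose graded pieces are of pure weight, since by \cite{Ara13,IM24} ${}^pH^0$ of pure motives are pure. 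The task then is to pass such a filtration to subquotients. This is where strictness enters: given any filtration of $M$ whose successive quotients are pure of increasing weights, I obtain the desired $W_{\leq a} M$, and this class of objects is stable under subobjects and quotients. The point is that the intersection with a subobject, and the image in a quotient, again have pure graded pieces, because weights on $\Perv_{\ell}^{\textup{mf}}$ are stable under subquotients.

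The delicate input is therefore the purity of ${}^pH^0$ of realizations of Chow-weight-pure motives, in the precise form where the six operations of \Cref{thm:Nori-6ff} preserve weights. I would cite \cite[Prop.~6.17]{IM24} for this.
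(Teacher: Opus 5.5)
Your proposal is correct and essentially matches the paper. The paper gives no independent proof: it cites \cite[Prop.~6.17]{IM24} and \cite[\S\S~6.3--6.4]{Ara13}, stresses that existence is the only real difficulty, and derives strictness from the corresponding property of mixed perverse sheaves through the faithful exact functor $\iota_{\ell,X}$, which is exactly your uniqueness and strictness steps.

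Your existence sketch (write every object as a subquotient of some $\pi_{\sigma,X}(K)$, filter $\pi_{\sigma,X}(K)$ by the images of $\pi_{\sigma,X}(w_{\leq n}K)$, then induce filtrations on subquotients) is a sound expansion of the cited result, with two small corrections. First, its real input is weight-exactness of the $\ell$-adic realization for the Chow weight structure, i.e.\ Deligne's purity of proper direct images from smooth varieties, not \cite[Prop.~6.17]{IM24} itself; citing the latter for a step in its own proof is circular. Second, the subquotient step uses only the stability of weight conditions under subquotients, not strictness.
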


The whole difficulty in the proof is to show the existence of some filtration as in the statement for every given object $M$.
Strict compatibility with respect to morphisms 
%$M_1 \to M_2$ means that, for every $a \in \Z$, the diagram
%\begin{equation*}
%	\begin{tikzcd}
%		W_{\leq a} M_1 \arrow{r} \arrow{d} &
%		W_{\leq a} M_2 \arrow{d} \\
%		M_1 \arrow{r} &
%		M_2
%	\end{tikzcd}
%\end{equation*}
%is Cartesian, and
follows from the analogous property for the weight filtration of mixed perverse sheaves.
It implies that there are no non-zero morphisms between pure objects of different weights.
Hence, the subcategories $\MNori_{\sigma}^{w=a}(X)$ are in direct sum inside $\MNori_{\sigma}(X)$ as $a \in \Z$ varies.
Moreover, each $\MNori_{\sigma}^{w=a}$, and hence also their direct sum, is stable under subquotients inside $\MNori_{\sigma}(X)$.
In fact, there is more to say:
\begin{prop}[{\cite[Thm.~6.24]{IM24}}]\label[prop]{prop:pure=semisimple}
	For every $a \in \Z$, the abelian subcategory $\MNori_{\sigma}^{w=a}(X)$ is semi-simple.
\end{prop}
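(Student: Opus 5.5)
The plan is to transfer semi-simplicity from the $\ell$-adic side, where pure perverse sheaves over an algebraically closed field are geometrically semi-simple by the Decomposition Theorem of \cite{BBD82}, via the alternative $\ell$-adic presentation of $\MNori_{\sigma}(X)$ from \cite[Prop.~6.11]{IM24}. Objects of $\MNori_{\sigma}^{w=a}(X)$ have finite length, since $\iota_{\ell,X}$ is faithful exact into a finite-length category. So it suffices to show that every monomorphism $M' \hookrightarrow M$ in $\MNori_{\sigma}^{w=a}(X)$ splits. Equivalently, every extension $0 \to M' \to M \to M'' \to 0$ of pure objects of weight $a$ splits, i.e.\ the corresponding class in $\mathrm{Ext}^1_{\MNori_{\sigma}(X)}(M'',M')$ vanishes.

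The key steps are as follows.
\begin{enumerate}
\item Reduce to the case of a base field that is finitely generated over $\Q$. By \Cref{lem:MNori(X)_colim}, any extension is defined over some such subfield, and splitting over a subfield implies splitting after base change. There, the $\ell$-adic realization lands in genuinely mixed perverse sheaves in Deligne's sense, via a spreading out over a finitely generated $\Z$-algebra.
\item Use Gabber's purity and semi-simplicity theorem. This says that pure perverse sheaves become semi-simple after pulling back to $X_{\overline{k}}$.
\item Combine this with Galois descent, in the form of \Cref{lem:MNori(X)=MNori(X')^Gal} together with a filtered colimit over finite extensions. The goal is to show that the extension class dies in $\MNori$ itself. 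The real input is the weight argument: $\mathrm{Ext}^1$ between pure objects of the same weight is computed by $\mathrm{Hom}$ in the derived category $D^b(\MNori_{\sigma}(X))$. This group should inject into its $\ell$-adic counterpart modulo the part coming from $H^1$ of Galois cohomology, and that part has weight $\geq 1$ contributions that must vanish by Frobenius weight considerations.
\end{enumerate}

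The main obstacle is step 3: showing that an extension splitting after realization forces a splitting in Nori's category. Since $\iota_{\ell,X}$ is only faithful and not full on $\mathrm{Ext}^1$, this does not follow formally. To get around it, I would instead use the semi-simplicity of Nori motives of pure weight over a field \cite{Ara13}, which identifies them with André motives, and reduce to that case. Two ingredients would carry out the reduction. First, intermediate extension $j_{!*}$ preserves purity and is fully faithful, with good behaviour on subobject lattices (as in the proof of \Cref{prop:MNori_generic}). Second, every pure object is a direct sum of intermediate extensions of pure motivic local systems on strata; the splitting into strata comes from the $\ell$-adic support decomposition lifted via simple constituents.

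This reduces semi-simplicity to that of pure generic local systems. These are Nori motives over function fields, by \Cref{prop:MNori_sigma=MNori_Sigma}, where Arapura's theorem applies.
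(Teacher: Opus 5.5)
The paper gives no argument of its own for this statement; it imports it from \cite[Thm.~6.24]{IM24}. Judged on its own terms, your proposal has a gap at the one step that is specific to a base $X$ of positive dimension.

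Your first plan should be discarded, as you suspect. Over a non-closed base, pure $\ell$-adic perverse sheaves need not be semi-simple: a non-split unipotent extension of the trivial Galois representation of a number field by itself is pure of weight $0$. Geometric semi-simplicity over $\overline{k}$ also cannot be transported back to $\MNori_{\sigma}(X)$, because $\iota_{\ell,X}$ is not known to be injective on $\mathrm{Ext}^1$. Your claimed injectivity ``modulo Galois $H^1$'' has no basis.

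Your fallback has the right architecture: supports, intermediate extension, the generic point via \Cref{prop:MNori_generic} and \Cref{prop:MNori_sigma=MNori_Sigma}, and then pure Nori motives over function fields via \cite{Ara13}. But its key step is only asserted, namely the decomposition $M \cong j_{!*}j^*M \oplus i_*P$ of a pure object $M$ \emph{inside} $\MNori_{\sigma}(X)$. ``Lifting the $\ell$-adic support decomposition via simple constituents'' is exactly the transfer of a splitting along a faithful but non-full functor that you rejected in step 3. The Jordan--H\"older constituents of $M$ and their supports carry no information on whether the extensions between them split.

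The missing idea is that this particular splitting is canonical, so you can detect it after realization instead of lifting it. Let $j \colon U \hookrightarrow X$ be open dense with closed complement $i \colon Z \hookrightarrow X$. Consider the composite
\[
i_* {}^pH^0 i^! M \hookrightarrow M \twoheadrightarrow i_* {}^pH^0 i^* M
\]
of the inclusion of the largest subobject supported on $Z$ with the projection onto the largest quotient supported on $Z$. It becomes an isomorphism after realization, which is exactly what the support decomposition on the realization side says. It is therefore already an isomorphism in $\MNori_{\sigma}(X)$: the realization commutes with $i^*$, $i^!$ and perverse cohomology (cf.\ \Cref{thm:Nori-6ff}), and a faithful exact functor is conservative. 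Then $M \cong i_*{}^pH^0 i^! M \oplus K$, where $K$ has no non-zero subobject or quotient supported on $Z$, so $K \cong j_{!*}j^*M$.

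Alternatively, you can run the weight argument of \cite{BBD82} directly in $D^b(\MNori_{\sigma}(X))$. For $P,Q$ pure of weight $w$ on $Z$ and $L$ pure of weight $w$ on $U$, one has $\mathrm{Ext}^1(i_*Q, j_{!*}L) = \mathrm{Hom}(Q, {}^pH^1 i^! j_{!*}L)$ and $\mathrm{Ext}^1(j_{!*}L, i_*P) = \mathrm{Hom}({}^pH^{-1} i^* j_{!*}L, P)$. Both vanish, because ${}^pH^1 i^! j_{!*}L$ has weights $\geq w+1$, ${}^pH^{-1} i^* j_{!*}L$ has weights $\leq w-1$, and there are no non-zero morphisms between objects with disjoint weight ranges.

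With this in place, induction on the dimension of the support gives your decomposition into intermediate extensions of pure motivic local systems. The rest of your fallback then goes through, provided you keep the reduction of step 1, for two reasons:
\begin{enumerate}
\item \Cref{prop:MNori_sigma=MNori_Sigma} needs a complex embedding of $k(U)$ extending $\sigma$, which does not exist for instance when $k = \C$ and $\dim U \geq 1$.
\item Strata that are not geometrically connected over $k$ must be regarded as varieties over the algebraic closure of $k$ in their function field.
\end{enumerate}
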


\begin{cor}\label[cor]{cor:semisimple}
	The subcategory of semi-simple objects inside $\MNori_{\sigma}(X)$ equals $\bigoplus_{a \in \Z} \MNori_{\sigma}^{w=a}(X)$.
\end{cor}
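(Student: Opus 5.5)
We prove the two inclusions separately, using only \Cref{prop:pure=semisimple} and the strictness of the weight filtration recalled above.

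For the inclusion $\supseteq$, fix an object $M \in \bigoplus_{a \in \Z} \MNori_{\sigma}^{w=a}(X)$, so $M \simeq \bigoplus_a M_a$ with $M_a \in \MNori_{\sigma}^{w=a}(X)$ and only finitely many $M_a$ non-zero. By \Cref{prop:pure=semisimple}, each $M_a$ is a finite direct sum of simple objects of $\MNori_{\sigma}^{w=a}(X)$. A simple object $S$ of $\MNori_{\sigma}^{w=a}(X)$ is also simple in $\MNori_{\sigma}(X)$: any subobject of $S$ in $\MNori_{\sigma}(X)$ is again pure of weight $a$, because $\MNori_{\sigma}^{w=a}(X)$ is stable under subquotients. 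Hence $M$ is a finite direct sum of simple objects of $\MNori_{\sigma}(X)$, i.e.\ semi-simple.

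For the inclusion $\subseteq$, it suffices to treat a simple object $S \in \MNori_{\sigma}(X)$, since the direct sum subcategory is closed under finite direct sums. The weight filtration $W_{\leq \bullet} S$ is finite; this follows from the analogous property for $\iota_{\ell,X}(S)$ together with faithfulness and exactness of $\iota_{\ell,X}$. By simplicity, each $W_{\leq a} S$ is either $0$ or $S$. Let $a$ be the smallest integer with $W_{\leq a} S = S$. Then $W_{\leq a-1} S = 0$. The defining property of the weight filtration gives $S = W_{\leq a} S \in \MNori_{\sigma}^{w \leq a}(X)$ and $S = S/W_{\leq a-1} S \in \MNori_{\sigma}^{w \geq a}(X)$, so $S \in \MNori_{\sigma}^{w=a}(X)$. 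A general semi-simple object is a finite direct sum of simple objects, since objects of $\MNori_{\sigma}(X)$ have finite length as their images in perverse sheaves do. It therefore lies in $\bigoplus_a \MNori_{\sigma}^{w=a}(X)$.

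The only point needing care is the finiteness of the weight filtration and of lengths. We take both from the faithful exact functor into perverse sheaves and the corresponding properties there, so no real obstacle is expected.
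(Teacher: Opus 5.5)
Your proof is correct and is essentially an expanded version of the paper's one-line argument. The paper observes that, by \Cref{prop:pure=semisimple}, an object is semi-simple if and only if it is the direct sum of its weight-graded pieces, and your two inclusions establish exactly this; you phrase the forward direction via purity of simple objects rather than via splitting of the weight filtration, which amounts to the same thing, and your justifications of finite length and finiteness of the weight filtration via the faithful exact functor $\iota_{\ell,X}$ correctly fill in details the paper leaves implicit.
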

\begin{proof}
	Indeed, \Cref{prop:pure=semisimple} implies that an object $M \in \MNori_{\sigma}(X)$ is semi-simple if and only if it splits as the direct sum of its weight-graded pieces.
\end{proof}

\begin{rem}\label[rem]{rem:weights-bc}
	Let $k'/k$ be a finite field extension, and choose a complex embedding $\sigma' \colon k' \hookrightarrow \C$ extending $\sigma$.
	Given a $k$-variety $X$, set $X' := X \times_k k'$.
	Then, the base-change functor $\MNori_{\sigma}(X) \to \MNori_{\sigma'}(X')$ is compatible with the weight filtrations (since the same holds for the underlying functor $\Perv_{\ell}^{\textup{mf}}(X) \to \Perv_{\ell}^{\textup{mf}}(X')$).
\end{rem}

%By \cite[Cor. 6.27(1)]{IM24}, the weight filtration on perverse motives extends to a weight structure on $D^b(\MNori(X))$, in the sense of \cite{Bon14}.
%As a consequence of \cite{BBD82}, for every morphism of $k$-varieties $f \colon X \to Y$, the inverse image functor $f^* \colon D^b(\MNori(Y)) \to D^b(\MNori(X))$ takes $D^b_{w \leq a}(\MNori(Y))$ to $D^b_{w \leq a}(\MNori(X))$ for all $a \in \Z$.

Suppose now that $X$ is smooth and geometrically connected over $k$.
Since the abelian subcategory $\MNoriLoc_{\sigma}(X) \subset \MNori_{\sigma}(X)$ is stable under subquotients, it contains all terms of the weight filtration of its objects, as well as all of their weight-graded pieces.

\begin{nota}
	For every $a \in \Z$, we let $\MNoriLoc_{\sigma}^{w \leq a}(X)$ denote the intersection $\MNoriLoc_{\sigma}(X) \cap \MNori_{\sigma}^{w\leq a}(X)$, and similarly for $\MNoriLoc_{\sigma}^{w \geq a}(X)$ and  $\MNoriLoc_{\sigma}^{w=a}(X)$.
	Moreover, we let $\MNoriLoc_{\sigma}^{\pure}(X)$ denote the intersection $\MNoriLoc_{\sigma}(X) \cap \bigoplus_{a \in \Z} \MNori_{\sigma}^{w=a}(X) = \bigoplus_{a \in \Z} \MNoriLoc_{\sigma}^{w=a}(X)$.
\end{nota}

\begin{ex}\label[ex]{ex:Q_X-w=dim(X)}
	The shifted unit object ${^p \Q}_X \in \MNoriLoc_{\sigma}(X)$ is pure of weight $\dim(X)$:
	this corresponds to the fact that the ordinary unit object $\Q_X \in D^b(\MNori_{\sigma}(X))$ is pure of weight $0$.
	It follows that, for all $a \in \Z$, the duality endofunctor of $\MNoriLoc_{\sigma}(X)$ exchanges $\MNoriLoc_{\sigma}^{w \leq a + \dim(X)}(X)$ with $\MNoriLoc_{\sigma}^{w \geq -a + \dim(X)}(X)$ (as a consequence of \cite[\S~5.1.14(ii*)]{BBD82}).
	\qed
\end{ex}

By general Tannakian formalism, the abelian subcategory of semi-simple objects inside a neutral Tannakian category in characteristic $0$ is in fact a Tannakian subcategory.
In the case of motivic local systems, we have:
\begin{lem}\label[lem]{lem:MNoriLoc-semisimple}
	The Tannakian subcategory of semi-simple objects inside $\MNoriLoc_{\sigma}(X)$ equals $\MNoriLoc_{\sigma}^{\pure}(X)$. 
\end{lem}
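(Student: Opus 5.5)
The claim is that, inside $\MNoriLoc_{\sigma}(X)$, the semi-simple objects are exactly the objects of $\MNoriLoc_{\sigma}^{\pure}(X)$, that is, direct sums of pure motivic local systems. I will deduce this from \Cref{cor:semisimple}, which gives the same statement in the ambient category $\MNori_{\sigma}(X)$. The only thing to check is that semi-simplicity of a motivic local system does not depend on whether we view it in $\MNoriLoc_{\sigma}(X)$ or in $\MNori_{\sigma}(X)$. I will prove both inclusions separately.

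\emph{Semi-simple implies pure.} Let $M \in \MNoriLoc_{\sigma}(X)$ be semi-simple in $\MNoriLoc_{\sigma}(X)$. Write $M$ as a finite direct sum of simple objects $M_i$ of $\MNoriLoc_{\sigma}(X)$; this is possible since the category is Artinian and Noetherian. Each $M_i$ is also simple in $\MNori_{\sigma}(X)$. Indeed, $\MNoriLoc_{\sigma}(X)$ is stable under subquotients in $\MNori_{\sigma}(X)$, so any subobject of $M_i$ taken in $\MNori_{\sigma}(X)$ already lies in $\MNoriLoc_{\sigma}(X)$. Hence $M$ is semi-simple in $\MNori_{\sigma}(X)$. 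By \Cref{cor:semisimple}, $M$ then lies in $\bigoplus_{a} \MNori_{\sigma}^{w=a}(X)$. Each weight-graded piece of $M$ is a subquotient of $M$, so it is again a motivic local system. Therefore $M \in \bigoplus_a \MNoriLoc_{\sigma}^{w=a}(X) = \MNoriLoc_{\sigma}^{\pure}(X)$.

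\emph{Pure implies semi-simple.} Let $M \in \MNoriLoc_{\sigma}^{\pure}(X)$. By \Cref{cor:semisimple}, $M$ is semi-simple in $\MNori_{\sigma}(X)$, so it is a finite direct sum of objects that are simple in $\MNori_{\sigma}(X)$. Each summand is a subobject of $M$, hence lies in $\MNoriLoc_{\sigma}(X)$ by stability under subquotients. It is simple there as well, because $\MNoriLoc_{\sigma}(X)$ is a full subcategory and has fewer candidate subobjects. So $M$ is semi-simple in $\MNoriLoc_{\sigma}(X)$.

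It remains to see that $\MNoriLoc_{\sigma}^{\pure}(X)$ is a Tannakian subcategory. This follows from the general fact, recalled just before the statement, that the semi-simple objects of a neutral Tannakian category in characteristic $0$ form a Tannakian subcategory. One can also check it directly by weights. The unit ${^p \Q}_X$ is pure of weight $\dim(X)$ (\Cref{ex:Q_X-w=dim(X)}), and duality sends weight $a$ to weight $2\dim(X)-a$. For the tensor product $\otimes^{\dagger}$, weights should add with a shift of $-\dim(X)$, which one would verify through the $\ell$-adic realization. Since the general Tannakian fact suffices, none of this needs checking. The only point requiring care in the whole argument is the comparison of semi-simplicity between the two ambient categories, and it is settled by stability under subquotients, so I expect no real obstacle.
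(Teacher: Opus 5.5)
Your proposal is correct and follows essentially the same route as the paper: the equality is deduced from \Cref{cor:semisimple} using only that $\MNoriLoc_{\sigma}(X)$ is stable under subquotients inside $\MNori_{\sigma}(X)$, and you simply spell this out in more detail for both inclusions. The Tannakian property likewise rests on the general characteristic-$0$ fact, and the paper's optional direct check is the one you sketch, namely that weights add under $\otimes^{\dagger}$ up to the shift by $\dim(X)$.
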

\begin{proof}
	Since $\MNoriLoc_{\sigma}(X)$ is stable under subquotients inside $\MNori_{\sigma}(X)$, it follows from \Cref{cor:semisimple} that an object $M \in \MNoriLoc_{\sigma}(X)$ is semi-simple if and only if it belongs to $\MNoriLoc_{\sigma}^{\pure}(X)$.
	
	One can also check directly that $\MNoriLoc_{\sigma}^{\pure}(X)$ is stable under the shifted tensor product on $\MNoriLoc_{\sigma}(X)$.
	Indeed, for all $a, b \in \Z$, the shifted tensor product restricts to
	\begin{equation*}
		- \otimes^{\dagger} - \colon \MNoriLoc_{\sigma}^{w \leq a + \dim(X)}(X) \times \MNoriLoc_{\sigma}^{w \leq b + \dim(X)}(X) \to \MNoriLoc_{\sigma}^{w \leq a+b+\dim(X)}(X)
	\end{equation*}  
	(by \cite[\S~5.1.14(ii)]{BBD82}).
	Dually, it restricts to
	\begin{equation*}
		- \otimes^{\dagger} - \colon \MNoriLoc_{\sigma}^{w \geq a+\dim(X)}(X) \times \MNoriLoc_{\sigma}^{w \geq b+\dim(X)}(X) \to \MNoriLoc_{\sigma}^{w \geq a+b+\dim(X)}(X),
	\end{equation*}
	(see \Cref{ex:Q_X-w=dim(X)}), and therefore to
	\begin{equation*}
		- \otimes^{\dagger} - \colon \MNoriLoc_{\sigma}^{w=a+\dim(X)}(X) \times \MNoriLoc_{\sigma}^{w=b+\dim(X)}(X) \to \MNoriLoc_{\sigma}^{w=a+b+\dim(X)}(X).
	\end{equation*} 
\end{proof}

\begin{nota}
	For every point $x \in X^{\sigma}$, we let $\Gmot^{\pure}(X,x)$ denote the Tannaka dual of $\MNoriLoc_{\sigma}^{\pure}(X)$ with respect to the fibre functor at $x$.
\end{nota}

Under Tannaka duality, the inclusion $\MNoriLoc_{\sigma}^{\pure}(X) \subset \MNoriLoc_{\sigma}(X)$ determines a surjective homomorphism
\begin{equation*}
	\Gmot(X,x;\sigma) \twoheadrightarrow \Gmot^{\pure}(X,x;\sigma),
\end{equation*}
which identifies $\Gmot^{\pure}(X,x;\sigma)$ with the maximal pro-reductive quotient of $\Gmot(X,x;\sigma)$.
An object $M \in \MNoriLoc_{\sigma}(X)$ is semi-simple if and only if the abelian category $\langle M \rangle^{\otimes}$ is semi-simple.
By Tannaka duality, this happens if and only if the group $\Gmot(M,x;\sigma)$ is reductive for some (or, equivalently, for all) $x \in X^{\sigma}$.

Note that, since the formation of pro-reductive quotients is not functorial with respect to arbitrary homomorphisms of pro-algebraic groups, a general monoidal functor between neutral Tannakian categories needs not send semi-simple objects to semi-simple objects.
In the motivic setting, however, we have the following positive result:

\begin{lem}\label[lem]{lem:f^*-pure}
	Let $f \colon X \to Y$ be a morphism between smooth, geometrically connected $k$-varieties.
	Then, the shifted inverse image functor $f^{\dagger} \colon \MNoriLoc_{\sigma}(Y) \to \MNoriLoc_{\sigma}(X)$ restricts to
	\begin{equation*}
		f^{\dagger} \colon \MNoriLoc_{\sigma}^{\pure}(Y) \to \MNoriLoc_{\sigma}^{\pure}(X).
	\end{equation*}
\end{lem}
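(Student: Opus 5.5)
Since $\MNoriLoc_{\sigma}^{\pure}(Y) = \bigoplus_{a} \MNoriLoc_{\sigma}^{w=a}(Y)$ and $f^{\dagger}$ is additive, it suffices to show that $f^{\dagger}$ sends a pure object of weight $a$ to an object of $\MNoriLoc_{\sigma}^{w=a+\dim(X)-\dim(Y)}(X)$. Weights on $\MNori_{\sigma}$ are defined through the faithful exact functor $\iota_{\ell}$ into mixed filtered $\ell$-adic perverse sheaves, and $\iota_\ell$ commutes with inverse images. The statement therefore reduces to the corresponding one for mixed $\ell$-adic local systems. Objects of $\MNoriLoc$ realize to shifted lisse sheaves, so throughout we work with $M = L[\dim(Y)]$, where $L$ is a lisse mixed sheaf on $Y$.

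The key step is to translate between perverse weights and pointwise weights on smooth varieties. For a lisse sheaf $L$ on a smooth connected $Y$, the shift $L[\dim Y]$ is perverse, and it is pure of weight $a$ as a perverse sheaf exactly when $L$ is pointwise pure of weight $a - \dim Y$. I would justify this via \cite[\S~5.1.14, Cor.~5.4.3]{BBD82}, or more simply: $L[\dim Y]$ has weight $\leq a$ if and only if $L$ is pointwise of weight $\leq a-\dim Y$ (the definition of $D_{w\le}$). For the other inequality, apply duality. Since $Y$ is smooth, $\mathbb{D}(L[\dim Y]) = L^\vee(\dim Y)[\dim Y]$, so weight $\geq a$ is equivalent to $L^\vee$ being pointwise of weight $\leq -(a-\dim Y)$, i.e. to $L$ being pointwise of weight $\geq a-\dim Y$. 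This duality computation matches \Cref{ex:Q_X-w=dim(X)}.

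Ordinary inverse image $f^*$ preserves pointwise weights, because stalks of $f^*L$ at $x$ are stalks of $L$ at $f(x)$. Hence $f^*L$ is pointwise pure of weight $a-\dim Y$, and the previous paragraph gives that $f^{\dagger}M = (f^*L)[\dim X]$ is perverse pure of weight $a - \dim Y + \dim X$. Pulling back along $\iota_{\ell,X}$, this shows $f^{\dagger}M \in \MNoriLoc_\sigma^{w=a+\dim X-\dim Y}(X)$.

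The main obstacle is making the $\ell$-adic setting legitimate. Over a general $k$, the mixed filtered formalism of \cite[Prop.~6.17]{IM24} is defined by spreading out to finitely generated bases, and weights are checked at closed points of models. I would argue that $\iota_\ell$ is compatible with $f^*$ (using \Cref{thm:Nori-6ff} and the $\ell$-adic analogue), and that the pointwise characterisation of weights for lisse sheaves holds in that filtered setting, since it is checked on a model where the usual Frobenius-weight arguments apply. An alternative route that avoids pointwise arguments is to use only the BBD stability statements: $f^*$ preserves $D_{w\le}$, and $f^! = f^*(d)[2d]$ on lisse objects for $f$ smooth. However, this fails for non-smooth $f$ such as closed immersions, so the pointwise route is the one to pursue.
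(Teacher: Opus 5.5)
Your proof is correct, but it takes a different route from the paper.

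You pass to $\ell$-adic lisse sheaves and prove that $L[\dim Y]$ is perverse-pure of weight $a$ exactly when $L$ is pointwise pure of weight $a-\dim Y$. For the lower bound you use $\mathbb{D}(L[d]) \simeq L^\vee(d)[d]$ on a smooth variety. Stability under $f^*$ is then immediate, since stalks of $f^*L$ are stalks of $L$.

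The paper never uses pointwise weights and stays inside the motivic categories:
\begin{itemize}
\item The upper bound comes directly from \cite[\S~5.1.14(i)]{BBD82}: $f^\dagger$ maps $\MNoriLoc_{\sigma}^{w\leq a+\dim(Y)}(Y)$ into $\MNoriLoc_{\sigma}^{w\leq a+\dim(X)}(X)$.
\item The lower bound is formal. The Tannakian dual $M\mapsto M^\vee$ exchanges $w\leq a+\dim$ with $w\geq -a+\dim$ on both $X$ and $Y$ (\Cref{ex:Q_X-w=dim(X)}). It commutes with $f^\dagger$ because $f^\dagger$ is monoidal, and monoidal functors preserve duals.
\end{itemize}

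The paper's route is shorter. It also avoids the bookkeeping you identify as your main obstacle: lisseness of the $\ell$-adic realization, spreading out to models over finitely generated bases, and checking Frobenius weights at closed points. It uses the mixed $\ell$-adic formalism only through the black-box stability statements of BBD. Your route gives a somewhat more concrete by-product: purity of a motivic local system is detected pointwise on a model.

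Your reason for discarding the $f^!$ alternative is mistaken. For any morphism $f\colon X\to Y$ of smooth varieties and any lisse $L$, you have
\begin{equation*}
f^!L \simeq \mathbb{D}_X f^*\mathbb{D}_Y L \simeq f^*L(d)[2d], \qquad d:=\dim(X)-\dim(Y).
\end{equation*}
This holds for closed immersions too, where $d$ is minus the codimension. So that route, combined with \cite[\S~5.1.14(i*)]{BBD82}, would also have worked, though your main argument does not need it.
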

\begin{proof}
	Indeed, for every $a \in \Z$, the shifted inverse image functor restricts to
	\begin{equation*}
		f^{\dagger} \colon \MNoriLoc_{\sigma}^{w \leq a + \dim(Y)}(Y) \to \MNoriLoc_{\sigma}^{w \leq a + \dim(X)}(X)
	\end{equation*}  
	(by \cite[\S~5.1.14(i)]{BBD82}).
	Dually, it restricts to
	\begin{equation*}
		f^{\dagger} \colon \MNoriLoc_{\sigma}^{w \geq a + \dim(Y)}(Y) \to \MNoriLoc_{\sigma}^{w \geq a + \dim(X)}(X),
	\end{equation*} 
	(see \Cref{ex:Q_X-w=dim(X)}), and therefore to
	\begin{equation*}
		f^{\dagger} \colon \MNoriLoc_{\sigma}^{w=a+\dim(Y)}(Y) \to \MNoriLoc_{\sigma}^{w=a+\dim(X)}(X).
	\end{equation*}
\end{proof}

In particular, for every point $x \in X^{\sigma}$ we have a commutative diagram of the form
\begin{equation*}
	\begin{tikzcd}
		\Gmot(X,x;\sigma) \arrow{r} \arrow[two heads]{d} &
		\Gmot(Y,f(x);\sigma) \arrow[two heads]{d} \\
		\Gmot^{\pure}(X,x;\sigma) \arrow{r} &
		\Gmot^{\pure}(Y,f(x);\sigma).
	\end{tikzcd}
\end{equation*}

\begin{rem}\label[rem]{rem:weights_sigma-Sigma} 
	Suppose that we are given a complex embedding $\Sigma \colon k(X) \hookrightarrow \C$ extending $\sigma$.
	Then, the equivalence $\MNoriLoc_{\sigma}(k(X)) = \MNori_{\Sigma}(k(X))$ of \Cref{prop:MNori_sigma=MNori_Sigma} is compatible with weights (as these are defined in terms of the absolute $\ell$-adic realization).
\end{rem}

\subsection{Definition and main properties}

We want to study the properties of the weight filtration on motivic local systems over complex varieties.
As in \Cref{sect:MotLocus}, we omit any mention of complex embeddings from the notation,
and we identify the complex-analytic space underlying a given complex variety with the set of its complex points.

In the following, we let $X$ be a smooth, connected complex variety. 

\begin{defn}
	We define the \textit{weight-splitting locus} of $M \in \MNoriLoc(X)$ as the subset $\WeightLocus(M) \subset X(\C)$ consisting of all points $x \in X(\C)$ such that the motive $x^{\dagger} M \in \MNori(\C)$ is semi-simple.
\end{defn}

The terminology is justified by the above discussion: 
a point $x \in X(\C)$ lies in $\WeightLocus(M)$ if and only if the weight filtration of $x^{\dagger} M$ splits.
Since semi-simple motives form a Tannakian subcategory (by \Cref{lem:MNoriLoc-semisimple}), the object $x^{\dagger} M$ is semi-simple if and only if the abelian category $\langle x^{\dagger} M\rangle^{\otimes}$ is semi-simple.
By Tannaka duality, this happens if and only if the motivic Galois group $\Gmot(x^{\dagger} M)$ is reductive.
 
Note that, contrarily to the exceptional locus, the weight-splitting locus needs not form a strict subset of $X(\C)$:
indeed, we always have $\WeightLocus(M) = X(\C)$ whenever $M$ is semi-simple (as a consequence of \Cref{lem:f^*-pure}).
However, this is the only case when it happens.
In fact, the basic geometric properties of the weight-splitting locus can be summarized as follows: 
\begin{thm}\label[thm]{thm:WeightLocus}
	Suppose that $M$ is not semi-simple.
	Then, the locus $\WeightLocus(M)$ is a countable union of subsets of $\MotLocus(M)$ the form $S(\C)$, with $S$ a strict closed algebraic subvariety of $X$.
	Moreover, if $X$ admits a model $X_k$ over an algebraically closed subfield $k \subset \C$ such that $M$ belongs to the essential image of $\MNoriLoc(X_k)$, then each maximal closed subvariety inside $\WeightLocus(M)$ is defined over $k$ as well.
\end{thm}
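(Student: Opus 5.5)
We follow the proof of \Cref{thm:MotLocus}. Fix a countable algebraically closed subfield $k \subset \C$ with a model $X_k$ and an object $M_k \in \MNoriLoc(X_k)$ whose base change is $M$; such a field exists by \Cref{lem:MNori(X)_colim}. The argument has four steps: show $\WeightLocus(M) \subset \MotLocus(M)$, show that Zariski-dense points lie outside $\WeightLocus(M)$, propagate membership from a generic point of each subvariety $S_k$ to all of $S(\C)$, and conclude.

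\textbf{Step 1: $\WeightLocus(M) \subset \MotLocus(M)$.} Let $x \notin \MotLocus(M)$, so that $\Gmot(x^{\dagger} M) = \Gmot^{\odot}(M,x)$. Since $M$ is not semi-simple, $\Gmot(M,x)$ is not reductive. The key input is \Cref{prop:M_semisimple=Gmot0_reductive}, which says that semi-simplicity can be checked after removing the Artin contribution. Concretely, $\Gmot(M,x)$ is reductive if and only if its finite-index normal subgroup $\Gmot^{\odot}(M,x)$ is; in characteristic $0$ this is standard, since the unipotent radical of a group coincides with that of any finite-index subgroup. It follows that $\Gmot(x^{\dagger} M)$ is not reductive, so $x^{\dagger} M$ is not semi-simple and $x \notin \WeightLocus(M)$.

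Combining Step 1 with the proof of \Cref{thm:MotLocus}, every point $z \in X(\C)$ that is Zariski-dense in $X_k$ lies outside $\WeightLocus(M)$. Hence $\WeightLocus(M)$ is contained in the union of the sets $S(\C)$, with $S = S_k \times_k \C$ and $S_k$ a strict closed subvariety of $X_k$.

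\textbf{Step 2: propagation along subvarieties.} Fix an irreducible strict closed subvariety $S_k \subset X_k$ and a point $t \in S(\C)$ that is Zariski-dense in $S_k$. We claim that $t \in \WeightLocus(M)$ implies $S(\C) \subset \WeightLocus(M)$.
\begin{itemize}
\item \emph{Smooth case.} Suppose $S_k$ is smooth, with closed immersion $i \colon S \hookrightarrow X$. Applying Step 1 to $i^{\dagger} M$ at the generic point $t$, semi-simplicity of $t^{\dagger} M = t^{\dagger} i^{\dagger} M$ forces $i^{\dagger} M$ to be semi-simple. Otherwise $t$ would lie outside $\WeightLocus(i^{\dagger} M)$: if $i^{\dagger} M$ is not semi-simple, then $t \notin \MotLocus(i^{\dagger} M)$, and Step 1 gives $t \notin \WeightLocus(i^{\dagger} M)$. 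By \Cref{lem:f^*-pure}, every stalk $s^{\dagger} i^{\dagger} M = s^{\dagger} M$ with $s \in S(\C)$ is then semi-simple.
\item \emph{Singular case.} Take an embedded resolution $p \colon X'_k \to X_k$ by \cite{Hir64}, exactly as in the proof of \Cref{thm:MotLocus}. Since stalks satisfy $(x')^{\dagger} p^* M = p(x')^{\dagger} M$, we have $\WeightLocus(p^* M) = p^{-1}(\WeightLocus(M))$. Moreover $p^* M$ is still not semi-simple, by the full faithfulness with stable essential image shown in \Cref{lem:MotLocus_birat}. Running the same chain of equivalences as there, with $\WeightLocus$ in place of $\MotLocus$, gives the claim.
\end{itemize}

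\textbf{Step 3: conclusion.} Every point of $X(\C)$ is a generic point of a unique irreducible closed subvariety of $X_k$. Together with Steps 1 and 2, this shows that $\WeightLocus(M)$ is a union of sets $S(\C)$ with $S \subset \MotLocus(M)$ strict and defined over $k$. The union is countable because $k$ is countable.

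For the statement about fields of definition, take a different algebraically closed field $k$ as in the theorem and enlarge it to a countable one containing the given model. Descent over the original algebraically closed field then follows exactly as in \Cref{thm:MotLocus}: the construction above produces the same locus with maximal components defined over any such $k$, since it depends only on $M_k$.

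The main obstacle is Step 1, specifically the reductivity comparison between $\Gmot(M,x)$ and $\Gmot^{\odot}(M,x)$. This is where \Cref{prop:M_semisimple=Gmot0_reductive} is essential. The rest parallels the exceptional locus verbatim.
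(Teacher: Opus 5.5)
Your proposal is correct and follows the paper's proof essentially step for step: the inclusion $\WeightLocus(M) \subset \MotLocus(M)$ via \Cref{prop:M_semisimple=Gmot0_reductive}, the smooth case via $i^{\dagger} M$ together with the generic-point claim from the proof of \Cref{thm:MotLocus}, and the singular case via embedded resolution. Your direct finite-index argument for the reductivity of $\Gmot^{\odot}(M,x)$ is a valid shortcut for that proposition, and your check that $p^* M$ stays non-semi-simple is harmless but unnecessary. There is one slip in the last paragraph: you cannot ``enlarge'' an arbitrary algebraically closed $k$ to a countable field. Instead, choose the countable algebraically closed field inside the given $k$ (possible by \Cref{lem:MNori(X)_colim}), so that the countably many $S_k$ are defined over $k$. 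Each maximal closed subvariety of $\WeightLocus(M)$ is then one of them, because an irreducible complex variety is not a countable union of proper closed subvarieties.
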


Note the close resemblance to the statement of \Cref{thm:MotLocus}.
The starting point of the proof is the following semi-simplicity criterion, which we are going to apply to motives over function fields:

\begin{lem}\label[lem]{lem:split-bc}
	Let $K$ be a field of characteristic $0$ endowed with a complex embedding $\Sigma \colon K \hookrightarrow \C$;
	let $\overline{K}$ be an algebraic closure of $K$, and fix a complex embedding $\overline{\Sigma} \colon \overline{K} \hookrightarrow \C$ extending $\Sigma$.
	Then, a short exact sequence in $\MNori_{\Sigma}(K)$
	\begin{equation*}
		\Sfrak \colon 0 \to N_1 \to N \to N_2 \to 0
	\end{equation*}
	splits if and only if so does its image in $\MNori_{\overline{\Sigma}}(\overline{K})$
	\begin{equation*}
		\overline{\Sfrak} \colon 0 \to \overline{N}_1 \to \overline{N} \to \overline{N}_2 \to 0.
	\end{equation*}
\end{lem}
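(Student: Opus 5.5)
The ``only if'' direction is immediate: the base-change functor $\MNori_{\Sigma}(K) \to \MNori_{\overline{\Sigma}}(\overline{K})$ is additive, so a splitting of $\Sfrak$ maps to a splitting of $\overline{\Sfrak}$. For the converse, I would first reduce to a finite extension, and then average over the Galois group.

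\emph{Reduction to a finite Galois extension.} Apply \Cref{cor:N'_to_Nbar_fullyfaith} to the object $N$. Let $K'/K$ be the finite Galois extension with group $\Gmot^{\Artin}(N;\Sigma)$, set $\Sigma' := \overline{\Sigma}|_{K'}$, and write $N'$ for the image of $N$ in $\MNori_{\Sigma'}(K')$. Then the base-change functor $\langle N' \rangle^{\otimes} \to \langle \overline{N} \rangle^{\otimes}$ is an equivalence. The objects $N_1'$ and $N_2'$ are subquotients of $N'$, so they lie in $\langle N' \rangle^{\otimes}$. A section $\overline{N}_2 \to \overline{N}$ of $\overline{\Sfrak}$ is a morphism in $\langle \overline{N} \rangle^{\otimes}$. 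By full faithfulness it therefore comes from a morphism $s' \colon N_2' \to N'$, and by faithfulness $s'$ is again a section of the image sequence $\Sfrak'$. So $\Sfrak'$ splits in $\MNori_{\Sigma'}(K')$.

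\emph{Galois descent of the splitting.} Let $e \colon \Spec(K') \to \Spec(K)$ and $G := \Gal(K'/K)$. By \Cref{lem:MNori(X)=MNori(X')^Gal} together with \Cref{rem:MNori_sigma(k')=MNori_sigma'(k')}, $e^*$ identifies $\MNori_{\Sigma}(K)$ with the $G$-equivariant objects of $\MNori_{\Sigma'}(K')$. Morphisms $N_2 \to N$ over $K$ therefore correspond to the $G$-invariant morphisms in $\mathrm{Hom}(N_2',N')$. The action is by $\Q$-linear automorphisms, and it preserves the affine subspace of sections of $\Sfrak'$: each $g \in G$ conjugates a section into a section, because the maps $N_1' \to N' \to N_2'$ are equivariant. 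Since the characteristic is $0$, the average $s := \frac{1}{|G|}\sum_{g \in G} g\cdot s'$ is still a section, as a convex combination of sections. It is also $G$-invariant, so it descends to a section $N_2 \to N$ of $\Sfrak$.

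\emph{Main obstacle.} The step needing the most care is the descent. One must make precise that the $G$-action on $\mathrm{Hom}_{\MNori_{\Sigma'}(K')}(N_2',N')$ coming from the equivariant structures is compatible with composition with the equivariant maps of $\Sfrak'$. One must also check that invariant morphisms are exactly those coming from $\MNori_{\Sigma}(K)$, which is the content of the equivalence with equivariant objects. An alternative for this step is to note that $\mathrm{Hom}_K(N_2,N) = \mathrm{Hom}_{K'}(N_2',N')^{G}$ and that taking $G$-invariants is exact over $\Q$. Surjectivity of $\mathrm{Hom}(N_2',N') \to \mathrm{Hom}(N_2',N_2')$ then gives surjectivity on invariants, so $\mathrm{id}_{N_2}$ lifts to a section over $K$.
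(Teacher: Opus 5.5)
Your proposal is correct and follows essentially the same route as the paper: you pass to the finite Galois extension $K'/K$ with group $\Gmot^{\Artin}(N;\Sigma)$ via \Cref{cor:N'_to_Nbar_fullyfaith}, then average a section over $\Gal(K'/K)$ and descend it through the equivalence of \Cref{lem:MNori(X)=MNori(X')^Gal}. Your extra checks fill in details the paper leaves implicit: faithfulness to see that $s'$ is a section, and equivariance of the maps in $\Sfrak'$ to see that the average is again a section.
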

\begin{proof}
	It is obvious that $\overline{\Sfrak}$ splits if so does $\Sfrak$, so we only have to show the converse implication.
	To this end, let $K'/K$ be the finite Galois extension with Galois group $\Gmot^{\Artin}(N;\Sigma)$, write $\Sigma' := \overline{\Sigma}|_{K'}$, and let
	\begin{equation*}
		\Sfrak' \colon 0 \to N'_1 \to N' \to N'_2 \to 0
	\end{equation*}
	denote the image of $\Sfrak$ in $\MNori_{\Sigma'}(K')$.
	Since the base-change functor $\langle N' \rangle^{\otimes} \to \langle \overline{N} \rangle^{\otimes}$ is an equivalence (by \Cref{cor:N'_to_Nbar_fullyfaith}), the sequence above splits as soon as $\overline{\Sfrak}$ splits.
	We have to deduce that, in this case, the sequence $\Sfrak$ splits as well.
	To this end, note that the base-change functor $\MNori_{\Sigma}(K) \to \MNori_{\Sigma'}(K')$ factors through an equivalence
	\begin{equation*}
		\MNori_{\Sigma}(K) \xrightarrow{\sim} \MNori_{\Sigma}(\Spec(K'))^{\Gal(K'/K)} = \MNori_{\Sigma'}(K')^{\Gal(K'/K)},
	\end{equation*}
	obtained from \Cref{lem:MNori(X)=MNori(X')^Gal} and \Cref{rem:MNori_sigma(k')=MNori_sigma'(k')}.
	Now fix a section $\phi \colon N'_2 \to N'$ to the epimorphism $N' \twoheadrightarrow N'_2$.
	By construction, the morphism in $\MNori_{\Sigma}(\Spec(K'))$
	\begin{equation*}
		[K' \colon K]^{-1} \sum_{g \in \Gal(K'/K)} g \cdot \phi \colon N'_2 \to N'
	\end{equation*}
	is a $\Gal(K'/K)$-invariant section, which thus descends to a section $N_2 \to N$ to the original epimorphism $N \twoheadrightarrow N_2$.
\end{proof}

\begin{cor}\label[cor]{cor:semisimple-bc}
	Keep the notation and assumptions of \Cref{lem:split-bc}.
	Then, an object $N \in \MNori_{\Sigma}(K)$ is semi-simple if and only if so is its image $\overline{N} \in \MNori_{\overline{\Sigma}}(\overline{K})$.
\end{cor}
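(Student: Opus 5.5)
The plan is to reduce semi-simplicity to the splitting of finitely many short exact sequences, namely those coming from the weight filtration, and then apply \Cref{lem:split-bc} to each of them. The easy direction is formal: if $N$ is semi-simple, then by \Cref{cor:semisimple} it is a direct sum of pure objects. The base-change functor $\MNori_{\Sigma}(K) \to \MNori_{\overline{\Sigma}}(\overline{K})$ is additive and compatible with weights, since weights are defined via the $\ell$-adic realization, which is compatible with base change as in \Cref{rem:weights-bc}. So $\overline{N}$ is a direct sum of pure objects, hence semi-simple by \Cref{prop:pure=semisimple}.

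For the converse, assume $\overline{N}$ is semi-simple. By \Cref{cor:semisimple}, $N$ is semi-simple if and only if, for every $a \in \Z$, the short exact sequence
\begin{equation*}
	\Sfrak_a \colon 0 \to W_{\leq a} N \to N \to N/W_{\leq a} N \to 0
\end{equation*}
splits. Indeed, iterating splittings of these sequences (only finitely many are non-trivial) decomposes $N$ as the direct sum of its weight-graded pieces, which are semi-simple by \Cref{prop:pure=semisimple}.

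The key point is that base change carries $\Sfrak_a$ to the corresponding weight sequence $\overline{\Sfrak}_a$ of $\overline{N}$. This holds because the functor is exact and preserves the subcategories $\MNori^{w \leq a}$ and $\MNori^{w \geq a+1}$. By the uniqueness characterization of the weight filtration, the image of $W_{\leq a} N$ is then $W_{\leq a} \overline{N}$.

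Since $\overline{N}$ is semi-simple, each $\overline{\Sfrak}_a$ splits. By \Cref{lem:split-bc}, each $\Sfrak_a$ then splits as well, and so $N$ is semi-simple.

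The only point requiring care is the compatibility of base change with weights for the possibly infinite algebraic extension $\overline{K}/K$; \Cref{rem:weights-bc} is only stated for finite extensions. I would handle this either directly, by observing that weights on both sides are detected by the $\ell$-adic realization, whose weights are stable under extension of the base field, or via \Cref{lem:MNori_colim}, which writes $\MNori_{\overline{\Sigma}}(\overline{K})$ as the $2$-colimit over finite subextensions. In the latter approach, the weight filtration on the colimit is computed at a finite stage. Alternatively, one can avoid the issue entirely: any object of $\MNori^{w \leq a}(K)$ maps to an object whose image is a subobject of $\overline{N}$ of weight at most $a$ with quotient of weight at least $a+1$, and this is all that is needed.
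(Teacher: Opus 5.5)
Your argument is correct, modulo the base-change-of-weights point you flag yourself, but it is organized differently from the paper's.

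The paper does not apply \Cref{lem:split-bc} over $\overline{K}$ as a black box. It first replaces $\overline{N}$ by its image $N'$ over the finite Galois extension $K'/K$ with group $\Gmot^{\Artin}(N;\Sigma)$, using the equivalence $\langle N' \rangle^{\otimes} \simeq \langle \overline{N} \rangle^{\otimes}$ of \Cref{cor:N'_to_Nbar_fullyfaith}. It then compares the weight sequences of $N$ and $N'$ via \Cref{rem:weights-bc} and the Galois-averaging step from the proof of \Cref{lem:split-bc}. Working over $K'$ is exactly what lets the paper invoke weight compatibility only for a finite extension.

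Your direct use of \Cref{lem:split-bc} actually makes the converse direction simpler than you present it. Since $\overline{N}$ is semi-simple, \emph{every} short exact sequence with middle term $\overline{N}$ splits. So for any subobject $N_1 \subset N$, the lemma shows that $0 \to N_1 \to N \to N/N_1 \to 0$ splits, i.e.\ every subobject of $N$ is a direct summand. Your ``key point'' that $\Sfrak_a$ is carried to the weight sequence of $\overline{N}$, and weights altogether, are therefore unnecessary in that direction.

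The price of your route is paid in the forward direction. There you need purity to be preserved under base change along the infinite extension $\overline{K}/K$, which the paper only records for finite extensions. This is very plausible by the same $\ell$-adic reasoning, but you can sidestep it with the paper's detour. First, $N$ semi-simple implies $N'$ semi-simple by \Cref{cor:semisimple} and \Cref{rem:weights-bc}. Then $\overline{N}$ is semi-simple, because $\langle N' \rangle^{\otimes} \to \langle \overline{N} \rangle^{\otimes}$ is an equivalence between subcategories stable under subquotients.

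Alternatively, both directions follow at once from \Cref{cor:ses_Gmot-Artin}. The group $\Gmot(\overline{N};\overline{\Sigma})$ has finite index in $\Gmot(N;\Sigma)$, so the two groups share the same identity component, and reductivity depends only on the identity component.

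Your third proposed workaround does not really avoid the issue, since it still refers to weights over $\overline{K}$.
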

\begin{proof}
	Let again $K'/K$ be the finite Galois extension with Galois group $\Gmot^{\Artin}(N;\Sigma)$, write $\Sigma' := \overline{\Sigma}|_{K'}$, and let $N'$ denote the image of $N$ in $\MNori_{\Sigma'}(K')$.
	As in the proof of \Cref{lem:split-bc}, we reduce ourselves to showing that $N$ is semi-simple if and only if so is $N'$.
	This amounts to showing that the weight filtration of $N$ splits if and only if so does the weight filtration of $N'$ (by \Cref{cor:semisimple}).
	We claim that in fact, for every $a \in \Z$, the short exact sequence in $\MNori_{\Sigma}(K)$
	\begin{equation*}
		0 \to W_{\leq a-1} N \to N \to W_{\geq a} N \to 0
	\end{equation*}
	splits if and only if so does the short exact sequence in $\MNori_{\Sigma'}(K')$
	\begin{equation*}
		0 \to W_{\leq a-1} N' \to N' \to W_{\geq a} N' \to 0.
	\end{equation*}
	Since the second sequence is the image of the first one under the base-change functor (see \Cref{rem:weights-bc}), the claim follows from the proof of \Cref{lem:split-bc}.
\end{proof}

An object $M \in \MNoriLoc(X)$ is semi-simple if and only if the Tannaka-dual group $\Gmot(M,x)$ is reductive for some (or, equivalently, for all) $x \in X(\C)$.
The previous results allow us to refine this criterion as follows:
\begin{prop}\label[prop]{prop:M_semisimple=Gmot0_reductive}
	An object $M \in \MNoriLoc(X)$ is semi-simple if and only if the group $\Gmot^{\odot}(M,x)$ is reductive for some (or equivalently, for all) $x \in X(\C)$.
\end{prop}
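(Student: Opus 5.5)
Since $\Gmot^{\odot}(M,x)$ is a normal subgroup of finite index in $\Gmot(M,x)$, reductivity of one is equivalent to reductivity of the other in characteristic $0$. Indeed, the unipotent radical of $\Gmot(M,x)$ is connected, normal and unipotent, so it lies in the identity component, which in turn lies in $\Gmot^{\odot}(M,x)$; moreover the unipotent radical of a finite-index normal subgroup coincides with that of the ambient group. Combined with the criterion recalled just before the statement ($M$ semi-simple if and only if $\Gmot(M,x)$ is reductive), this already gives the result. The independence of $x$ follows because the groups $\Gmot^{\odot}(M,x)$ form a local system of algebraic groups, so paths give isomorphisms between them.

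Nevertheless, the paper announces that the result rests on \Cref{cor:semisimple-bc}, so I would also give, or at least cross-check, the motivic argument, which identifies $\Gmot^{\odot}$ with a geometric motivic Galois group.

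\textbf{Descent to a countable field.} First, choose a countable algebraically closed subfield $k\subset\C$ with models $X_k$ and $M_k$, exactly as in the proof of \Cref{thm:MotLocus}. Semi-simplicity of $M$ is equivalent to that of $M_k$: by \Cref{prop:MNoriLoc_inv} the base-change functor is fully faithful with essential image stable under subquotients, so subobjects and splittings correspond.

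\textbf{Passage to the function field.} Next, pick a point $z$ that is Zariski-dense in $X_k$. By \Cref{prop:MNori_generic} and \Cref{prop:MNori_sigma=MNori_Sigma}, $M_k$ is semi-simple if and only if $N\in\MNori_{\Sigma_z}(k(X_k))$ is semi-simple. The equivalence respects weights (\Cref{rem:weights_sigma-Sigma}), and \Cref{cor:semisimple} characterizes semi-simplicity as splitting of the weight filtration; alternatively, one can use full faithfulness together with stability under subquotients directly. By \Cref{cor:semisimple-bc}, this holds if and only if $\overline{N}$ is semi-simple, that is, if and only if $\Gmot(\overline{N})$ is reductive.

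\textbf{Identification of the groups.} Finally, by the chain of isomorphisms in the proof of \Cref{thm:MotLocus}, together with \Cref{cor:ses_Gmot-Artin}, we have $\Gmot(\overline{N})\cong\Gmot(z^{\dagger}M)\cong\Gmot^{\odot}(M,z)$. This proves the equivalence at $z$, and the local-system argument above transports it to every $x$.

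The main point needing care is the identification of $\Gmot^{\odot}(M,z)$ with $\Gmot(\overline{N})$ compatibly with Artin quotients. The paper already supplies this in the proof of \Cref{thm:MotLocus}, so I expect no real obstacle; the elementary group-theoretic argument in the first paragraph serves as a backup.
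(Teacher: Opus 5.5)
Your proposal is correct, and its first paragraph is already a complete proof that is more elementary than the paper's.

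The paper makes the same initial reduction: it suffices to show that $\Gmot(M,z)$ is reductive if and only if its subgroup $\Gmot^{\odot}(M,z)$ is. It then settles this motivically. It picks a point $z$ that is Zariski-dense for a countable algebraically closed field of definition. As in the proof of \Cref{thm:MotLocus}, it identifies $\Gmot^{\odot}(M,z)\hookrightarrow\Gmot(M,z)$ with $\Gmot(\overline{N})\hookrightarrow\Gmot(N)$. Finally it invokes \Cref{cor:semisimple-bc}, whose proof relies on the weight filtration and Galois averaging.

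Your observation answers the reduced question at every point at once. A closed subgroup of finite index contains the identity component, so it has the same unipotent radical; you then only need that $\Gmot^{\Artin}(M,x)$ is finite. Your argument needs no models, no generic points and no weights. It also makes the ``some or, equivalently, all $x$'' clause automatic, so your local-system remark is unnecessary. The same observation, applied to \Cref{cor:ses_Gmot-Artin}, even yields \Cref{cor:semisimple-bc} directly. The averaging argument is still genuinely needed for splittings of short exact sequences, as in \Cref{lem:split-bc}.

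Your second, motivic sketch faithfully reproduces the paper's route and is sound. What that route buys is the interpretation of $\Gmot^{\odot}(M,z)$ as the geometric motivic Galois group $\Gmot(\overline{N})$. That interpretation is the real input in the proofs of \Cref{thm:MotLocus} and \Cref{thm:WeightLocus}, but for this proposition it is redundant.
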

\begin{proof}
	It suffices to show that, for a well-chosen point $z \in X(\C)$, the group $\Gmot(M,z)$ is reductive if and only if so is its subgroup $\Gmot^{\odot}(M,z)$.
	To choose such a point, we proceed as in the proof of \Cref{thm:MotLocus}: 
	fix a countable, algebraically closed subfield $k \subset \C$ such that $X$ admits a model $X_k$ over $k$ and $M$ is isomorphic to the image of an object $M_k \in \MNoriLoc(X_k)$ under the base-change functor $\MNoriLoc(X_k) \to \MNori(X)$, and let $z \in X(\C)$ be dense for the Zariski topology of $X_k$.
	We prove that the conclusion holds for this point.
	
	To this end, let $\Sigma_z \colon k(X_k) \hookrightarrow \C$ denote the complex embedding corresponding to $z$ (as in \Cref{constr:Sigma-z}), and fix a complex embedding $\overline{\Sigma}_z \colon \overline{k(X_k)} \hookrightarrow \C$ extending it.
	Let $N \in \MNori_{\Sigma_z}(k(X_k))$ denote the object corresponding to $\eta_{X_k}^{\dagger} M_k \in \MNoriLoc(k(X_k))$ under \Cref{prop:MNori_sigma=MNori_Sigma}, and write $\overline{N}$ for its image in $\MNori_{\overline{\Sigma}_z}(\overline{k(X_k)})$.
	As in the proof of \Cref{thm:MotLocus}, we can identify the closed immersion $\Gmot^{\odot}(M,z) \hookrightarrow \Gmot(M,z)$ with the closed immersion $\Gmot(\overline{N};\overline{\Sigma}_z) \hookrightarrow \Gmot(N;\Sigma_z)$.
	Hence, we are left to show that $\Gmot(N;\Sigma_z)$ is reductive if and only if so is $\Gmot(\overline{N};\overline{\Sigma}_z)$.
	Under Tannaka duality, this amounts to saying that $N$ is semi-simple if and only if so is $\overline{N}$, which is indeed the case (by \Cref{cor:semisimple-bc}).
\end{proof}

This result is useful to compare the weight-splitting locus with the exceptional locus.
The proof strategy of \Cref{thm:WeightLocus} is essentially the same as for \Cref{thm:MotLocus}.
Before getting to it, we record one more observation:
\begin{lem}\label[lem]{lem:i^*WeightLocus}
	Let $f \colon X \to Y$ be any morphism between smooth, connected complex varieties.
	Then, for every $M \in \MNoriLoc_{\sigma}(Y)$, we have
	\begin{equation*}
		\WeightLocus(f^{\dagger} M) = f^{-1} \WeightLocus(M).
	\end{equation*}
\end{lem}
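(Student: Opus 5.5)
The argument is a one-line reduction to the fact that stalks of $f^{\dagger}M$ are stalks of $M$. Fix a point $x \in X(\C)$. By definition, $x \in \WeightLocus(f^{\dagger} M)$ if and only if the motive $x^{\dagger} f^{\dagger} M \in \MNori(\C)$ is semi-simple. Similarly, $x \in f^{-1}\WeightLocus(M)$ if and only if $f(x)^{\dagger} M$ is semi-simple. So it suffices to produce an isomorphism in $\MNori(\C)$
\begin{equation*}
	x^{\dagger} f^{\dagger} M \simeq f(x)^{\dagger} M .
\end{equation*}
Semi-simplicity is invariant under isomorphism, so the two conditions then coincide pointwise, which gives the asserted equality of subsets.

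To construct the isomorphism, I will use the six-functor formalism of \Cref{thm:Nori-6ff}, which provides the connection isomorphism $(f \circ x)^* \simeq x^* f^*$ on $D^b(\MNori(-))$. Here I regard the point as a morphism $x \colon \Spec(\C) \to X$, so that $f \circ x = f(x) \colon \Spec(\C) \to Y$. I then check that the shifts match. By definition $f^{\dagger} = f^*[\dim X - \dim Y]$ and $x^{\dagger} = x^*[-\dim X]$, so
\begin{equation*}
	x^{\dagger} f^{\dagger} = x^* f^*[-\dim Y] \simeq f(x)^*[-\dim Y] = f(x)^{\dagger}.
\end{equation*}
Since $M$, $f^{\dagger}M$ and the stalks all lie in the hearts (motivic local systems, respectively $\MNori(\C)$), this isomorphism of complexes is an isomorphism of objects of the abelian categories. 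Monoidality of the isomorphism is not needed for this statement. Only the underlying isomorphism of motives matters, since semi-simplicity is a property of the object itself (equivalently, by \Cref{lem:MNoriLoc-semisimple} and \Cref{cor:semisimple}, splitting of its weight filtration).

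I do not expect a genuine obstacle. The only point requiring care is that the identification of the inverse image along a composite with the composite of inverse images holds already in $D^b(\MNori)$, not merely after realization. This is part of the six-functor formalism of \Cref{thm:Nori-6ff} (pseudo-functoriality of $(-)^*$). One might alternatively argue as in the proof of \Cref{lem:f^*-stalks}, via the lifting principle for universal abelian factorizations, but the direct appeal to the pseudofunctor structure suffices. If one also wants a version with the complex embedding $\sigma$ retained, as in the notation of the statement, the same argument applies verbatim.
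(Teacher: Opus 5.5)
Your proposal is correct and is essentially the paper's proof: the paper also invokes the canonical isomorphism of motives $x^{\dagger} f^{\dagger} M = f(x)^{\dagger} M$ and observes that one is semi-simple if and only if the other is. Your shift bookkeeping and your appeal to the pseudo-functoriality of inverse images from \Cref{thm:Nori-6ff} just make explicit where that isomorphism comes from.
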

\begin{proof}
	Indeed, for every point $x \in X(\C)$, we have a canonical isomorphism of motives $x^{\dagger} f^{\dagger} M = f(x)^{\dagger} M$.
	In particular, the first one is semi-simple if and only if so is the second one.
\end{proof}

\begin{proof}[Proof of \Cref{thm:WeightLocus}]
	As in the proof of \Cref{thm:MotLocus}, fix a countable, algebraically closed subfield $k \subset \C$ such that $X$ admits a model $X_k$ over $k$ and $M$ is isomorphic to the image of an object $M_k \in \MNoriLoc(X_k)$ under the base-change functor $\MNoriLoc(X_k) \to \MNoriLoc_{\sigma}(X)$.
	From now on, we tacitly identify $X$ with $X_k \times_k \C$, as similarly for the associated complex-analytic spaces.
	
	In the first place, we prove the inclusion $\WeightLocus(M) \subset \MotLocus(M)$.
	To this end, fix a point $x \in X(\C) \setminus \MotLocus(M)$.
	By definition, the group $\Gmot(x^{\dagger} M)$ is isomorphic to $\Gmot^{\odot}(M,x)$.
	Since $M$ is not semi-simple (by hypothesis), the latter group is not reductive (by \Cref{prop:M_semisimple=Gmot0_reductive}).
	Under Tannaka duality, this means that $x^{\dagger} M$ is not semi-simple.
	This proves the desired inclusion.
	
	In view of \Cref{thm:MotLocus}, it follows that $\WeightLocus(M)$ is contained inside the union of all subsets of the form $S(\C)$ where $S = S_k \times_k \C$ with $S_k$ a strict closed subvariety of $X_k$ contained in $\MotLocus(M)$.
	It remains to show that $\WeightLocus(M)$ equals the union of some of these subsets $S(\C)$.
	To this end, fix an irreducible, strict closed subvariety $S_k \subset X_k$, set $S := S_k \times_k \C$, and choose a point $t \in S(\C)$ which is dense of the Zariski topology of $S_k$.
	It suffices to show that $S(\C) \subset \WeightLocus(M)$ as soon as $t \in \WeightLocus(S)$.
	
	If $S_k$ is smooth, let $i \colon S \hookrightarrow X$ denote the associated closed immersion.
	Since $i^{-1}(\WeightLocus(M)) = \WeightLocus(i^{\dagger} M)$ (by \Cref{lem:i^*WeightLocus}), it suffices to show that the object $i^{\dagger} M \in \MNoriLoc(S)$ is semi-simple as soon as $t \in \WeightLocus(i^{\dagger} M)$.
	Indeed, if $i^{\dagger} M$ is not semi-simple, then we have $\WeightLocus(i^{\dagger} M) \subset \MotLocus(i^{\dagger} M)$ (by the above claim, applied to $i^{\dagger} M$), but we know that $t \notin \MotLocus(i^{\dagger} M)$ (by \Cref{thm:MotLocus}).
	
	If $S_k$ is not smooth, we use Embedded Resolution of Singularities \cite{Hir64}: 
	this gives us a proper birational morphism $p \colon X'_k \to X_k$ from a smooth variety $X'_k$ endowed with a smooth closed subvariety $S'_k$ such that $p$ restricts to a proper birational morphism $S'_k \to S_k$.
	As usual, set $S' := S'_k \times_k \C$, and let $t' \in S'(\C)$ denote the unique pre-image of $t$.
	We have the chain of equivalences
	\begin{equation*}
		t \in \WeightLocus(M) \iff t' \in \WeightLocus(p^* M) \iff S'(\C) \subset \WeightLocus(p^* M) \iff S(\C) \subset \WeightLocus(M),
	\end{equation*}
	where the first and last passages follow from \Cref{lem:i^*WeightLocus} while the middle passage follows from the smooth case.
	This concludes the proof. 
\end{proof}

\begin{rem}
	As in \Cref{rem:MotLocus_closure} one sees that, if $S$ is an irreducible closed subvariety of $X$ such that $\WeightLocus(M) \supset U(\C)$ for some non-empty Zariski open $U \subset S$, then $\WeightLocus(M) \supset S(\C)$.
\end{rem}

As explained above, if the motivic local system $M$ is semi-simple, then so is the motive $x^{\dagger} M$.
As a consequence of our main results, the converse holds as well:

\begin{cor}\label[cor]{cor:semi-simple_stalk}
	A motivic local system $M \in \MNoriLoc(X)$ is semi-simple as soon as the motive $x^{\dagger} M \in \MNori(\C)$ is semi-simple for all $x \in X(\C)$.
\end{cor}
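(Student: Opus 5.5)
We argue by contraposition, mirroring the proof of \Cref{cor:Artin=trivial_stalk}. Suppose that $M$ is not semi-simple; we want a point $x \in X(\C)$ such that $x^{\dagger} M$ is not semi-simple. By \Cref{thm:WeightLocus}, applied to the non-semi-simple object $M$, the locus $\WeightLocus(M)$ is contained in $\MotLocus(M)$. By \Cref{thm:MotLocus}, the locus $\MotLocus(M)$ is a countable union of subsets $S(\C)$ with $S$ a strict closed subvariety of $X$.

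We next check that such a countable union is different from $X(\C)$. If $\dim(X) \geq 1$, this holds because a connected complex variety of positive dimension is not a countable union of strict closed subvarieties, by a Baire category argument or uncountability of $\C$. If $\dim(X) = 0$, then $X = \Spec(\C)$. In that case $\MotLocus(M) = \emptyset$, and the strict closed subvarieties are empty, so the union is again proper. This is the same input already used in \Cref{cor:Artin=trivial_stalk}, which invokes $\MotLocus(M) \neq X(\C)$.

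So we may pick $x \in X(\C) \setminus \MotLocus(M)$. Then $x \notin \WeightLocus(M)$, that is, $x^{\dagger} M$ is not semi-simple. This contradicts the hypothesis, so $M$ is semi-simple.

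Alternatively, one can argue directly: at such an $x$ we have $\Gmot(x^{\dagger} M) = \Gmot^{\odot}(M,x)$, and this group is non-reductive by \Cref{prop:M_semisimple=Gmot0_reductive}. I do not expect a genuine obstacle, since all the work is contained in \Cref{thm:WeightLocus} and \Cref{thm:MotLocus}. The only point needing care is the properness of the countable union, which is the step above.
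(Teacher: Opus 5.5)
Your proof is correct and follows the paper's argument exactly. You assume $M$ is not semi-simple, use \Cref{thm:WeightLocus} to get $\WeightLocus(M) \subset \MotLocus(M)$, and conclude from $\MotLocus(M) \neq X(\C)$ (\Cref{thm:MotLocus}) that some stalk $x^{\dagger} M$ is not semi-simple. Your Baire-category justification of $\MotLocus(M) \neq X(\C)$, with the separate zero-dimensional case, spells out what the paper treats as immediate from \Cref{thm:MotLocus}. Your alternative route through \Cref{prop:M_semisimple=Gmot0_reductive} is exactly how the inclusion $\WeightLocus(M) \subset \MotLocus(M)$ is proved inside \Cref{thm:WeightLocus}.
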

\begin{proof}
	Indeed, suppose that $M$ is not semi-simple.
	Then, we have $\WeightLocus(M) \subset \MotLocus(M)$ (by \Cref{thm:WeightLocus}).
	Hence, the motive $x^{\dagger} M$ is not semi-simple whenever $x \notin \MotLocus(M)$.
	Since $\MotLocus(M) \neq X(\C)$ (by \Cref{thm:MotLocus}), the conclusion follows.
\end{proof}
Next, we discuss the stability properties of the weight-splitting locus under Galois conjugation.
For this, we restore the notation of \Cref{subsect:MotLocus-Gal}:
we fix a subfield $k \subset \C$ and let $\overline{k} \subset \C$ denote its algebraic closure.
By \Cref{thm:WeightLocus}, if $X$ and $M$ admit models over $\overline{k}$, the weight-splitting locus $\WeightLocus(M)$ is defined over $\overline{k}$ as well.
We now study what happens when these models further descend to $k$. 

\begin{prop}\label[prop]{prop:WeightLocus-Gal}
	Assume that $X$ admits a model $X_k$ over $k$ such that $M$ belongs to the essential image of $\MNoriLoc(X_k)$.
	Then, the $\Gal(\overline{k}/k)$-action on $X$ induced by the Galois action on $X_{\overline{k}}$ stabilizes the locus $\WeightLocus(M)$.
\end{prop}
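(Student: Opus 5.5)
The argument will follow the proof of \Cref{prop:MotLocus-Gal} step by step. It is in fact simpler, because the weight-splitting locus only involves the stalk motives $x^{\dagger} M \in \MNori(\C)$, and not the groups $\Gmot^{\odot}(M,x)$. As there, fix $\varphi \in \Gal(\overline{k}/k)$, let $f \colon X_{\overline{k}} \xrightarrow{\sim} X_{\overline{k}}$ be the corresponding $k$-automorphism, and keep writing $f \colon X \xrightarrow{\sim} X$ for the induced scheme automorphism. We must show that the permutation $f \colon X(\C) \to X(\C)$ maps $\WeightLocus(M)$ into itself. Applying this to $\varphi^{-1}$ as well then gives stability.

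The first step is to relate the two stalks involved. Let $f^* \colon \MNoriLoc(X) \to \MNoriLoc(X)$ be the monoidal self-equivalence from \Cref{constr:f^*-Gal}. By \Cref{lem:f^*-stalks}, there is an isomorphism of motives $x^{\dagger} f^* M \cong f(x)^{\dagger} M$ in $\MNori(\C)$ for every $x \in X(\C)$.

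The second step uses that $M$ is defined over $k$. Choose $M_k \in \MNoriLoc(X_k)$ with image isomorphic to $M$, and let $e \colon X_{\overline{k}} \to X_k$ be the base-change morphism. The proof of \Cref{prop:MotLocus-Gal} then supplies an isomorphism $f^* M \xrightarrow{\sim} M$ in $\MNoriLoc(X)$. It comes from the canonical identification $(ef)^* = e^*$ and the compatibility of $f^*$ with base change from $X_{\overline{k}}$ to $X$. Combining the two steps gives $f(x)^{\dagger} M \cong x^{\dagger} M$ for all $x \in X(\C)$.

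It follows that $x^{\dagger} M$ is semi-simple if and only if $f(x)^{\dagger} M$ is, since semi-simplicity is invariant under isomorphism in $\MNori(\C)$. Hence $x \in \WeightLocus(M)$ if and only if $f(x) \in \WeightLocus(M)$. No hypothesis that $M$ fails to be semi-simple is needed; if $M$ is semi-simple, $\WeightLocus(M) = X(\C)$ and the statement is trivial anyway.

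All the essential work was already done for \Cref{prop:MotLocus-Gal}. The only point needing care is the one flagged there: constructing $f^*$ on Nori motivic local systems, since $f$ is not a $\C$-morphism, and checking that $f^*$ intertwines the stalks at $x$ and $f(x)$. Here we may simply invoke \Cref{constr:f^*-Gal} and \Cref{lem:f^*-stalks}. Monoidality of these isomorphisms is not even required, only the underlying isomorphism of motives.
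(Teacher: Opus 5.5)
Your proposal is correct and is the same argument as the paper's. You combine the isomorphism $x^{\dagger} f^* M \cong f(x)^{\dagger} M$ from \Cref{lem:f^*-stalks} with the isomorphism $f^* M \cong M$ built in the proof of \Cref{prop:MotLocus-Gal}, and then conclude because semi-simplicity of $x^{\dagger} M$ is invariant under isomorphism in $\MNori(\C)$.
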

\begin{proof}
	We proceed as for \Cref{prop:MotLocus-Gal}.
	Given an element $\varphi \in \Gal(\overline{k}/k)$, let $f \colon X_{\overline{k}} \xrightarrow{\sim} X_{\overline{k}}$ denote the corresponding $k$-automorphism, and write also $f \colon X \xrightarrow{\sim} X$ for the induced scheme automorphism.
	We have to prove that the permutation map on complex points $f \colon X(\C) \xrightarrow{\sim} X(\C)$ stabilizes the subset $\WeightLocus(M)$.
	
	So fix a point $x \in \WeightLocus(M)$, and let us show that $f(x) \in \WeightLocus(M)$ as well.
	To this end, it suffices to recall that we have an isomorphism of motives
	\begin{equation*}
		f(x)^{\dagger} M = x^{\dagger} f^* M = x^{\dagger} M,
	\end{equation*}
	where the second passage follows as in the proof of \Cref{prop:MotLocus-Gal}.
	In particular, since $x^{\dagger} M$ is assumed to be semi-simple, so must be $f(x)^{\dagger} M$.
\end{proof}

\subsection{Variant for short exact sequences}

There is an alternative approach to studying the weight-splitting locus of motivic local systems, which leads to slightly more precise results.
It is based on the following notion:

%Saying that the weight filtration of an object $M \in \MNoriLoc(X)$ splits amounts to saying that, for every $a \in \Z$, the short exact sequence
%\begin{equation*}
%	0 \to W_{\leq a-1} M \to M \to W_{\geq a} M \to 0
%\end{equation*}
%splits.
%In fact, since the weight filtration is finite, the conditions needs to be checked only for finitely many values of $a$.
%Since the functor $x^{\dagger} \colon \MNoriLoc(X) \to \MNori(\C)$ preserves the weight filtration up to shifting by $-\dim(X)$ (by \Cref{rem:f^*-pure}), we are led to introduce the following notion: 
\begin{defn}
	For every $a \in \Z$, we define the \textit{weight-$a$-splitting locus} of $M \in \MNoriLoc(X)$ as the subset $\WeightLocus_a(M) \subset X(\C)$ consisting of all points $x \in X(\C)$ such that the short exact sequence
	\begin{equation*}
		0 \to W_{\leq a-1} M \to M \to W_{\geq a} M \to 0
	\end{equation*}
	splits.
\end{defn}

Since the weight filtration on $M$ is finite, we trivially have $\WeightLocus_a(M) = X(\C)$ for $|a| \gg 0$.
The full weight-splitting locus can be recovered as the essentially finite intersection
\begin{equation*}
	\WeightLocus(M) = \bigcap_{a \in \Z} \WeightLocus_a(M).
\end{equation*}
An alternative way to prove the main properties of the full weight-splitting locus (that is, \Cref{thm:WeightLocus} and \Cref{prop:WeightLocus-Gal}) is to establish the same properties for each locus $\WeightLocus_a(M)$.
More generally:

\begin{defn}
	We define the \textit{splitting locus} of a short exact sequence in $\MNoriLoc(X)$ 
	\begin{equation*}
		\mathfrak{S} \colon 0 \to M' \to M \to M'' \to 0
	\end{equation*}
	as the subset $\SplitLocus(\mathfrak{S}) \subset X(\C)$ consisting of all points $x \in X(\C)$ such that the induced short exact sequence in $\MNori(\C)$
	\begin{equation*}
		x^{\dagger} \mathfrak{S} \colon 0 \to x^{\dagger} M' \to x^{\dagger} M \to x^{\dagger} M'' \to 0
	\end{equation*}
	splits.
\end{defn}

We trivially have $\SplitLocus(\mathfrak{S}) = X(\C)$ in case the sequence $\mathfrak{S}$ is already split.
If this is not the case, the splitting locus can be described as follows:
\begin{thm}\label[thm]{thm:ext-splitting_locus}
	For every short exact sequence in $\MNoriLoc(X)$
	\begin{equation*}
		\mathfrak{S} \colon 0 \to M' \to M \to M'' \to 0
	\end{equation*} 
	which is not split, the locus $\SplitLocus(\mathfrak{S})$ is a countable union of subsets of $X(\C)$ of the form $S(\C)$, with $S$ a strict closed algebraic subvariety of $X$.
	Moreover, if $X$ admits a model $X_k$ over an algebraically closed subfield $k \subset \C$ such that $M$ belongs to the essential image of $\MNoriLoc(X_k)$, then each maximal closed subvariety inside $\SplitLocus(\mathfrak{S})$ is defined over $k$ as well.
\end{thm}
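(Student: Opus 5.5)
We follow the template of the proofs of \Cref{thm:MotLocus} and \Cref{thm:WeightLocus}. First fix a countable, algebraically closed subfield $k \subset \C$ such that $X$ admits a model $X_k$ and the whole sequence $\mathfrak{S}$ is the image of a short exact sequence $\mathfrak{S}_k \colon 0 \to M'_k \to M_k \to M''_k \to 0$ in $\MNoriLoc(X_k)$. This is possible by \Cref{lem:MNori(X)_colim}, since morphisms also descend in a filtered $2$-colimit. By \Cref{prop:MNoriLoc_inv}, applied to the extension $\C/k$, $\mathfrak{S}_k$ is split if and only if $\mathfrak{S}$ is; hence $\mathfrak{S}_k$ is not split. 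In the second assertion, where only $M$ is assumed to come from $k$, the subobject $M'$ and the quotient $M''$ also come from $k$, because the essential image is stable under subquotients (\Cref{prop:MNoriLoc_inv}). As before, it then suffices to show two things: that $\SplitLocus(\mathfrak{S})$ avoids every point $z$ that is Zariski-dense in $X_k$, and that for every irreducible strict closed $S_k \subset X_k$ with $k$-generic point $t$, the condition $t \in \SplitLocus(\mathfrak{S})$ implies $S(\C) \subset \SplitLocus(\mathfrak{S})$.

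\textbf{Key step (generic points).} Let $z$ be Zariski-dense in $X_k$, and let $\Sigma_z$ and $\overline{\Sigma}_z$ be as in the proof of \Cref{thm:MotLocus}. Apply $\eta_{X_k}^{\dagger}$ to $\mathfrak{S}_k$ and transport the result through \Cref{prop:MNori_sigma=MNori_Sigma}; this gives a sequence $\mathfrak{S}_N$ in $\MNori_{\Sigma_z}(k(X_k))$. By \Cref{prop:MNori_generic} the functor $\eta_{X_k}^{\dagger}$ is fully faithful, so a splitting of $\mathfrak{S}_N$ would be the image of a splitting of $\mathfrak{S}_k$. Hence $\mathfrak{S}_N$ does not split. \Cref{lem:split-bc} then shows that its image $\overline{\mathfrak{S}}_N$ in $\MNori_{\overline{\Sigma}_z}(\overline{k(X_k)})$ does not split either. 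The commutative diagram of functors in the proof of \Cref{thm:MotLocus} identifies $z^{\dagger}\mathfrak{S}$ with the base change of $\overline{\mathfrak{S}}_N$ along $\C/\overline{k(X_k)}$. This base change is fully faithful by \Cref{prop:MNoriLoc_inv}, since the extension is one of algebraically closed fields and therefore geometrically integral, so $z^{\dagger}\mathfrak{S}$ does not split. Consequently $\SplitLocus(\mathfrak{S})$ is contained in the union of the sets $S(\C)$ with $S_k \subsetneq X_k$, and this union is countable.

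\textbf{Noetherian step.} Let $S_k$ be smooth, with inclusion $i$. The stalk functors satisfy $x^{\dagger}i^{\dagger} = i(x)^{\dagger}$, which gives $\SplitLocus(i^{\dagger}\mathfrak{S}) = i^{-1}\SplitLocus(\mathfrak{S})$; here $i^{\dagger}$ is exact on local systems, so $i^{\dagger}\mathfrak{S}$ is again a short exact sequence. If $i^{\dagger}\mathfrak{S}$ splits, all of $S(\C)$ lies in $\SplitLocus(\mathfrak{S})$. If it does not split, the key step applied on $S_k$ shows $t \notin \SplitLocus(\mathfrak{S})$. Thus $t \in \SplitLocus(\mathfrak{S})$ forces $S(\C) \subset \SplitLocus(\mathfrak{S})$. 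For singular $S_k$, use Hironaka's embedded resolution $p \colon X'_k \to X_k$ exactly as in the proof of \Cref{thm:WeightLocus}. The identity $\SplitLocus(p^*\mathfrak{S}) = p^{-1}\SplitLocus(\mathfrak{S})$ holds for the same stalk reason, and $p^*\mathfrak{S}$ is non-split because $p^*$ is fully faithful (see the proof of \Cref{lem:MotLocus_birat}). Finally, the maximal subvarieties are defined over $k$ because they are unions of the sets $S(\C)$ with $S$ defined over $k$.

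The main obstacle is the key step, specifically checking that the chain of identifications matches the sequences themselves, not only their Tannakian groups. Each functor involved must be exact, and fully faithful where needed, so that non-splitting is preserved at every stage. The one genuinely nontrivial input is \Cref{lem:split-bc}, which handles the passage from $k(X_k)$ to its algebraic closure.
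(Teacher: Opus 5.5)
Your proposal is correct and follows the paper's route: you descend $\mathfrak{S}$ to a countable algebraically closed $k$, show non-splitting at $k$-Zariski-dense points via $\eta_{X_k}^{\dagger}$, \Cref{lem:split-bc} and full faithfulness of base change to $\C$, and then run the smooth/Hironaka Noetherian step, which spells out the details the paper leaves to the reader. The only loose point is your last sentence: for the ``moreover'' part with a possibly uncountable algebraically closed $k$, you should first descend $M$ to a countable algebraically closed subfield $k_0 \subset k$ (via \Cref{lem:MNori(X)_colim}), and then use that an irreducible complex variety is not a countable union of proper closed subvarieties, so that each maximal subvariety of $\SplitLocus(\mathfrak{S})$ coincides with one of the countably many $S$ defined over $k_0 \subset k$.
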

\begin{proof}
	As in the proof of \Cref{thm:WeightLocus}, fix a countable algebraically closed subfield $k \subset \C$ such that $X$ admits a model $X_k$ over $k$ and $M$ is isomorphic to the image of an object $M_k \in \MNoriLoc(X_k)$ under the base-change functor $\MNoriLoc(X_k) \to \MNori(X)$.
	Since this functor is fully faithful, with essential image stable under subquotients (by \Cref{prop:MNoriLoc_inv}), the entire short exact sequence $\mathfrak{S}$ arises from a short exact sequence $\mathfrak{S}_k$ in $\MNoriLoc(X_k)$.
	
	With this set-up, the argument for \Cref{thm:WeightLocus} carries through, using directly \Cref{lem:split-bc} instead of \Cref{prop:M_semisimple=Gmot0_reductive}.
	We leave the details to the interested reader. 
\end{proof}

\begin{cor}\label[cor]{cor:split_stalk}
	A short exact sequence $\Sfrak$ in $\MNoriLoc(X)$ splits as soon as the short exact sequence $x^{\dagger} \Sfrak$ in $\MNori(\C)$ splits for all $x \in X(\C)$.
\end{cor}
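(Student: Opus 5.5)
The plan is to argue by contraposition, exactly as in the proofs of \Cref{cor:Artin=trivial_stalk} and \Cref{cor:semi-simple_stalk}. Suppose that $\Sfrak$ does not split; we must produce a point $x \in X(\C)$ such that $x^{\dagger} \Sfrak$ does not split, that is, a point outside $\SplitLocus(\Sfrak)$. By \Cref{thm:ext-splitting_locus}, the locus $\SplitLocus(\Sfrak)$ is a countable union of subsets $S(\C)$ with each $S$ a strict closed algebraic subvariety of $X$. A smooth connected complex variety $X$ cannot be covered by countably many strict closed subvarieties, for instance by the Baire category theorem, since each $S(\C)$ is nowhere dense in the analytic topology. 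Hence $\SplitLocus(\Sfrak) \neq X(\C)$, and any point in the complement gives the desired non-split stalk.

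If one prefers a more hands-on argument that avoids quoting the full countable-union statement, I would instead unwind the proof of \Cref{thm:ext-splitting_locus}. Choose a countable algebraically closed subfield $k \subset \C$ over which $X$ and $\Sfrak$ are defined; this is possible by \Cref{lem:MNori(X)_colim} together with \Cref{prop:MNoriLoc_inv}, which says the essential image is stable under subquotients, so the whole sequence descends to some $\Sfrak_k$. Then take a point $z \in X(\C)$ that is Zariski-dense in $X_k$. Such a point exists because $k$ is countable and $\C$ is uncountable; equivalently, it corresponds to a complex embedding $\Sigma_z$ of $k(X_k)$, as in \Cref{constr:Sigma-z}.

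Via \Cref{prop:MNori_generic} and \Cref{prop:MNori_sigma=MNori_Sigma}, the sequence $\Sfrak_k$ corresponds to a sequence in $\MNori_{\Sigma_z}(k(X_k))$, and this sequence is non-split by full faithfulness. By \Cref{lem:split-bc}, its image over $\overline{k(X_k)}$ is still non-split. By \Cref{prop:MNoriLoc_inv} applied to $\C/\overline{k(X_k)}$, which gives full faithfulness so that splitting is detected, this image identifies with $z^{\dagger}\Sfrak$, which is therefore non-split.

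The only step requiring care is this compatibility of the stalk at $z$ with the base-change diagram used in the proof of \Cref{thm:MotLocus}. That diagram has already been established there, so I expect no real obstacle. Either route concludes the proof.
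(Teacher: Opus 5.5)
Your first route is exactly the paper's proof: argue by contraposition and invoke \Cref{thm:ext-splitting_locus} to get $\SplitLocus(\Sfrak) \neq X(\C)$, with your Baire argument making explicit why countably many strict closed subvarieties cannot cover $X(\C)$. Your second route is also correct and a bit more economical: it only needs the Zariski-dense-point step (via \Cref{lem:split-bc} and \Cref{prop:MNoriLoc_inv}), not the full geometric description of the locus, which relies on resolution of singularities.
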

\begin{proof}
	Indeed, if $\Sfrak$ is not already split then $\SplitLocus(\Sfrak) \neq X(\C)$ (by \Cref{thm:ext-splitting_locus}). 
\end{proof}

For sake of completeness, we also state the stability properties of general splitting loci under Galois conjugation.
To this end, we fix a subfield $k \subset \C$, and we let $\overline{k} \subset \C$ denote its algebraic closure.

\begin{prop}\label[prop]{prop:SplitLocus-Gal}
	Suppose that $X$ admits a model $X_k$ over $k$ such that $\mathfrak{S}$ belongs to the essential image of $\MNoriLoc(X_k)$.
	Then, the $\Gal(\overline{k}/k)$-action on $X$ induced by the Galois action on $X_{\overline{k}}$ stabilizes the locus $\SplitLocus(\mathfrak{S})$.
\end{prop}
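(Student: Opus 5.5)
The plan is to copy the proof of \Cref{prop:WeightLocus-Gal} and \Cref{prop:MotLocus-Gal}. Fix $\varphi \in \Gal(\overline{k}/k)$. Let $f \colon X_{\overline{k}} \xrightarrow{\sim} X_{\overline{k}}$ be the corresponding $k$-automorphism, and also write $f \colon X \xrightarrow{\sim} X$ for the induced scheme automorphism. We must show that the permutation $f \colon X(\C) \to X(\C)$ maps $\SplitLocus(\Sfrak)$ into itself. Applying the same argument to $\varphi^{-1}$ then gives equality.

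The first step is to apply the exact monoidal functor $f^* \colon \MNoriLoc(X) \to \MNoriLoc(X)$ of \Cref{constr:f^*-Gal} to $\Sfrak$. Since $f^*$ is exact, this yields a short exact sequence $f^*\Sfrak$. By \Cref{lem:f^*-stalks}, for every $x \in X(\C)$ there is an isomorphism of short exact sequences $x^{\dagger} f^* \Sfrak \cong f(x)^{\dagger} \Sfrak$ in $\MNori(\C)$. The point is that the natural isomorphism $x^{\dagger} f^* \simeq f(x)^{\dagger}$ is natural in the object, so it is compatible with the two maps of $\Sfrak$.

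The second step is to produce an isomorphism of short exact sequences $f^*\Sfrak \cong \Sfrak$ in $\MNoriLoc(X)$. By hypothesis, $\Sfrak$ is the image of a sequence $\Sfrak_k$ in $\MNoriLoc(X_k)$. Let $e \colon X_{\overline{k}} \to X_k$ be the projection. The canonical natural isomorphism $e^* \simeq (ef)^* = f^* e^*$ of functors $\MNori(X_k) \to \MNori(X_{\overline{k}})$ appears in the proof of \Cref{prop:MotLocus-Gal}. Because it is a natural transformation, evaluating it on $\Sfrak_k$ gives an isomorphism of sequences $f^* e^* \Sfrak_k \cong e^* \Sfrak_k$ over $X_{\overline{k}}$. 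Base-changing to $X$ and using the compatibility of the two functors $f^*$ under $\MNoriLoc(X_{\overline{k}}) \to \MNoriLoc(X)$ (end of \Cref{constr:f^*-Gal}) gives $f^*\Sfrak \cong \Sfrak$.

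Combining the two steps, for $x \in \SplitLocus(\Sfrak)$ we get isomorphisms of short exact sequences
\begin{equation*}
f(x)^{\dagger}\Sfrak \cong x^{\dagger} f^* \Sfrak \cong x^{\dagger}\Sfrak .
\end{equation*}
Splitting is invariant under isomorphism of short exact sequences, so $f(x)^{\dagger}\Sfrak$ splits and hence $f(x) \in \SplitLocus(\Sfrak)$.

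The main obstacle is bookkeeping rather than substance. One must make sure the isomorphisms are isomorphisms of sequences, not merely of the middle objects. This holds because every identification used is a natural transformation between exact functors, which I would state explicitly.
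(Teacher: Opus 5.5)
Your proposal is correct and matches the paper's proof essentially verbatim: the paper also combines the isomorphism $f(x)^{\dagger}\Sfrak \cong x^{\dagger} f^*\Sfrak$ from \Cref{lem:f^*-stalks} with an isomorphism $f^*\Sfrak \cong \Sfrak$ coming from the model over $k$, exactly as in the proof of \Cref{prop:MotLocus-Gal}. Your remark that naturality makes these isomorphisms of short exact sequences, not just of the middle terms, is the bookkeeping the paper leaves implicit.
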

\begin{proof}
	As in the proof of \Cref{prop:WeightLocus-Gal}, fix an element $\varphi \in \Gal(\overline{k}/k)$, and let $f \colon X \xrightarrow{\sim} X$ denote the induced scheme automorphism.
	We have to prove that the permutation map on complex points $f \colon X(\C) \xrightarrow{\sim} X(\C)$ stabilizes the subset $\SplitLocus(\mathfrak{S})$.
	
	So fix a point $x \in \SplitLocus(\Sfrak)$, and let us show that $f(x) \in \SplitLocus(\Sfrak)$ as well.
	To this end, it suffices to observe that we have an isomorphisms of short exact sequences in $\MNori(\C)$
	\begin{equation*}
		f(x)^{\dagger} \mathfrak{S} = x^{\dagger} f^* \mathfrak{S} = x^{\dagger} \mathfrak{S},
	\end{equation*}
	where the second passage follows as in the proof of \Cref{prop:MotLocus-Gal}.
	In particular, since $x^{\dagger} \Sfrak$ is assumed to be split, so must be $f(x)^{\dagger} \Sfrak$.
\end{proof}


\begin{thebibliography}{9}
	\bibitem[SGA1]{SGA1} A. Grothendieck,
	\textit{Rev{\^e}tements étales et groupe fondamental}.
	Séminaire de Géometrie Algébrique du Bois-Marie, dirigé par A. Grothendieck, augmenté de deux exposés de Mme M. Raynaud. Lecture Notes in Math. \textbf{224}, Springer-Verlag, Berlin (1964).
	\bibitem[Stacks]{Stacks} The Stacks Project authors,
	\textit{The Stacks Project}.
	\url{https://stacks.math.columbia.edu/}.
	%\bibitem[Anc15]{Anc15} G. Ancona,
	%\textit{Décomposition de motifs abéliens}.
	%Manuscripta Mathematica \textbf{146} (2015), pp. 307--328.
	\bibitem[And92]{And92} Y. André,
	\textit{Mumford--Tate groups of mixed Hodge structures and the theorem of the fixed part}.
	Compos. Math. \textbf{82} (1992), pp. 1--24.
	\bibitem[And96]{And96} Y. André,
	\textit{Pour une théorie inconditionnelle des motifs}.
	Publ. Math. IH{\'E}S \textbf{83} (1996), pp. 5--49.
	%\bibitem[And04]{And04} Y. André,
	%\textit{Une introduction aux motifs (motifs purs, motifs mixtes, périodes)}.
	%Panoramas et Synthèses \textbf{17}, Soc. Math. de France, Paris (2004).
	\bibitem[And21]{And21} Y. André,
	\textit{A note on $1$-motives}.
	Int. Math. Res. Not. (2021), no. 3, pp. 2074--2080.
	\bibitem[Ara13]{Ara13} D. Arapura,
	\textit{An abelian category of motivic sheaves}.
	Adv. Math. \textbf{233} (2013), pp. 135--195.
	\bibitem[Ayo07a]{Ayo07a} J. Ayoub,
	\textit{Les six opérations de Grothendieck et le formalisme des cycles évanescents dans le monde motivique, I}.
	Astérisque (2007), no. 314, x + 466 pp. (2008). 
	\bibitem[Ayo07b]{Ayo07b} J. Ayoub,
	\textit{Les six opérations de Grothendieck et le formalisme des cycles évanescents dans le monde motivique, II}.
	Astérisque (2007), no. 315, vi + 364 pp. (2008).
	\bibitem[Ayo10]{Ayo10} J. Ayoub,
	\textit{Note sur la réalisation de Betti et les opérations de Grothendieck}.
	J. Inst. Math. Jussieu \textbf{9}, no. 2 (2010), pp. 225--263.
	\bibitem[Ayo14a]{Ayo14H1} J. Ayoub,
	\textit{L'algèbre de Hopf et le groups de Galois motiviques d'un corps de caractéristique nulle, I}.
	J. Reine Angew. Math. \textbf{693} (2014), pp. 1--149.
	\bibitem[Ayo14b]{Ayo14H2} J. Ayoub,
	\textit{L'algèbre de Hopf et le groupe de Galois motivique d'un corps de caractéristique nulle, II}.
	J. Reine Angew. Math. \textbf{693} (2014), pp. 151--226.
	\bibitem[Ayo14c]{Ayo14Et} J. Ayoub,
	\textit{La réalisation étale et les opérations de Grothendieck}.
	Ann. Sci. {\'E}c. Norm. Supér. \textbf{47}, no. 1 (2014), pp. 1--141.
	\bibitem[BT25]{BT25} B. Bakker, J. Tsimerman,
	\textit{Functional transcendence of periods and the geometric André--Grothendieck period conjecture}.
	Forum of Math. Sigma \textbf{13}, e97 (2025), pp. 1--24.
	\bibitem[BKU24]{BKU} G. Baldi, B. Klingler, E. Ullmo,
	\textit{On the density of the Hodge locus}.
	Invent. Math. \textbf{235} (2024), pp. 441--487.
	\bibitem[BBU26]{BaBiUr} G. Baldi, G. Binyamini, D. Urbanik,
	\textit{Algebraic Hodge generic points are dense}.
	Preprint (2026), \url{https://arxiv.org/abs/2606.08882}.
	\bibitem[Bei87]{Beilinson} A. Beilinson,
	\textit{On the derived category of perverse sheaves}.
	In: $K$-Theory, Arithmetic and Geometry (Moscow, 1984–1986). Lecture Notes in Math. \textbf{1289}, Springer, Berlin (1987), pp. 27--41.
	\bibitem[BBD82]{BBD82} A. Beilinson, J. Bernstein, P. Deligne,
	\textit{Faisceaux pervers}.
	In: Analysis and topology on singular spaces, I (Luminy, 1981). Astérisque \textbf{100}, Soc. Math. France, Paris (1982), pp. 5--171.
	%\bibitem[Bon14]{Bon14} M.V. Bondarko,
	%\textit{Weights for relative motives: relation with mixed complexes of sheaves}.
	%International Math. Research Notices (2014), no. 17, pp. 4715--4767.
	\bibitem[BPS10]{BPS10} P. Brosnan, G. Pearlstein, C. Schnell,
	\textit{The locus of Hodge classes in an admissible variation of mixed Hodge structures}.
	C. R. Math. Acad. Sci. Paris \textbf{348}, no. 11-12 (2010), pp. 657--660.
	\bibitem[CDK95]{CDK95} E. Cattani, P. Deligne, A. Kaplan,
	\textit{On the locus of Hodge classes}.
	J. Amer. Math. Soc. \textbf{82} (1995), pp. 483--505.
	\bibitem[CG17]{CG17} U. Choudhury, M. Gallauer Alves de Souza,
	\textit{A comparison of motivic Galois groups}.
	Adv. Math. \textbf{313} (2017), pp. 470--536. 
	%\bibitem[CD14]{CisDeg} D.C. Cisinski, F. Déglise,
	%\textit{{\'E}tale motives}.
	%\bibitem[DAE22]{DAE22} M. D'Addezio, H. Esnault,
	%\textit{On the universal extensions in Tannakian categories}.
	%International Math. Research Notices (2022), no. 18, pp. 14008--14033.
	%\bibitem[Del71a]{DelShi} P. Deligne,
	%\textit{Travaux de Shimura}.
	%Séminaire Bourbaki, Exposé 389. Lecture Notes in Math. \textbf{244} (1971), pp. 123-165.
	%\bibitem[Del71b]{DelHodge2} P. Deligne,
	%\textit{Théorie de Hodge II}.
	%Publications Mathématiques de l'IH{\'E}S \textbf{40} (1971), pp. 5--58.
	%\bibitem[Del74]{DelHodge3} P. Deligne,
	%\textit{Théorie de Hodge III}.
	%Publications Mathématiques de l'IH{\'E}S \textbf{44} (1974), pp. 5--77.
	\bibitem[DM82a]{DM82Ab} P. Deligne (notes by J.S. Milne),
	\textit{Hodge cycles on abelian varieties}.
	In: Hodge Cycles, Motives, and Shimura Varieties (Eds. P. Deligne, J.S. Milne, A. Ogus, K. Shih). Lecture Notes in Math. \textbf{900}, Springer, Berlin (1982), pp. 9--100.
	\bibitem[DM82b]{DM82} P. Deligne. J.S. Milne,
	\textit{Tannakian categories}.
	In: Hodge Cycles, Motives, and Shimura Varieties (Eds. P. Deligne, J.S. Milne, A. Ogus, K. Shih). Lecture Notes in Math. \textbf{900}, Springer, Berlin (1982), pp. 101--228.
	\bibitem[Hir64]{Hir64} H. Hironaka,
	\textit{Resolution of singularities of an algebraic variety over a field of characteristic zero. I}.
	Ann. Math. (2) \textbf{79} (1964), pp. 109--123.
	\bibitem[Hub97]{Hub97} A. Huber,
	\textit{Mixed perverse sheaves for schemes over number fields}.
	Compos. Math. \textbf{108} (1997), pp. 107--121.
	\bibitem[HMS17]{HMS17} A. Huber, S. M{\"u}ller-Stach,
	\textit{Periods and Nori motives}.
	Ergeb. Math. Grenzgeb. (3) \textbf{65}, Springer, Cham (2017).
	\bibitem[IM24]{IM24} F. Ivorra, S. Morel,
	\textit{The four operations on perverse motives}.
	J. Eur. Math. Soc. \textbf{26}, no. 11 (2024), pp. 4191--4272.
	\bibitem[Jac26]{Jac25} E. Jacobsen,
	\textit{Malcev completions, Hodge Theory, and Motives}.
	Algebra Number Theory \textbf{20}, no. 1 (2026), pp. 147--193.
	\bibitem[JT25]{JT25} E. Jacobsen, L. Terenzi,
	\textit{A comparison of categories of Nori motivic sheaves}.
	Preprint (2025), \url{https://arxiv.org/abs/2509.21476}.
	\bibitem[Kli17]{KliConj} B. Klingler,
	\textit{Hodge loci and atypical intersections: conjectures}.
	To appear in: Motives and Complex Multiplication.
	\url{https://arxiv.org/abs/1711.09387}.
	\bibitem[KO21]{KO21} B. Klingler, A. Otwinowska,
	\textit{On the closure of the Hodge locus of positive period dimension}.
	Invent. Math. \textbf{225} (3), pp. 857--883 (2021).
	\bibitem[KOU23]{KOU23} B. Klingler, A. Otwinowska, D. Urbanik,
	\textit{On the fields of definition of Hodge loci}.
	Ann. Sci. {\'E}c. Norm. Supér. \textbf{56} (2023), pp. 1299--1312.
	%\bibitem[Kot92]{Kot92} R.E. Kottwitz,
	%\textit{Points on some Shimura varieties over finite fields}.
	%Journal of the AMS \textbf{5}, no. 2 (1992), pp. 373--444.
	\bibitem[Kre22a]{KreutzAbs} T. Kreutz,
	\textit{Absolutely special subvarieties and absolute Hodge cycles}.
	Preprint (2022), \url{https://arxiv.org/abs/2111.00216}.
	\bibitem[Kre22b]{KreutzEll} T. Kreutz,
	\textit{$\ell$-Galois special subvarieties and the Mumford--Tate conjecture}.
	Preprint (2022), \url{https://arxiv.org/abs/2111.01126}.
	\bibitem[Mor25]{Mor24} S. Morel,
	\textit{Mixed $\ell$-adic complexes for schemes over number fields}.
	Doc. Math. \textbf{30}, no. 1 (2025), pp. 105--181.
	%\bibitem[PS08]{PS08} C. Peters, J. Steenbrick,
	%\textit{Mixed Hodge structures}.
	%Ergeb. Math. Grenzgeb. \textbf{52}, Springer, Berlin, Heidelberg (2008).
	%\bibitem[Sai90]{Sai90} M. Saito,
	%\textit{Mixed Hodge modules}.
	%Publications of the RIMS \textbf{26}, no. 2 (1990), pp. 221--333.
	\bibitem[SS16]{SaitoSchnell} M. Saito, C. Schnell,
	\textit{Fields of definition of Hodge loci}.
	In: Recent advances in Hodge Theory (Eds. M. Kerr, G. Pearlstein), LMS Lecture Note Series \textbf{427} (2016), pp. 275--291.
	%\bibitem[Sim90]{Simpson} C. Simpson,
	%\textit{Transcendental aspects of the Riemann--Hilbert correspondence}.
	%Illinois Journal of Math. \textbf{34}, no. 2 (1990), pp. 368--391.
	\bibitem[Ter24]{Ter24UAF} L. Terenzi,
	\textit{On the functoriality of universal abelian factorizations}.
	Preprint (2024), \url{https://arxiv.org/abs/2401.13583}.
	\bibitem[Ter26]{Ter24Nori} L. Terenzi,
	\textit{Tensor structure on perverse Nori motives}.
	Ann. $K$-Theory \textbf{11}, no. 1 (2026), pp. 47--170.
	\bibitem[Tub25a]{Tub25} S. Tubach,
	\textit{On the Nori and Hodge realisations of Voevodsky motives}.
	Compos. Math. \textbf{161}, no.9 (2025), pp. 2155--2201.
	\bibitem[Tub25b]{TubThesis} S. Tubach,
	\textit{Motifs de Nori relatifs et foncteurs de réalisation}.
	Ph.D. thesis, ENS de Lyon.
	\url{https://theses.hal.science/tel-05261833}.
	\bibitem[Tub26]{Tub25Artin} S. Tubach,
	\textit{Artin motives in relative Nori and Voevodsky motives}.
	J. Algebra \textbf{690} (2026), pp. 400--424.
	\bibitem[Voi07]{Voisin} C. Voisin,
	\textit{Hodge loci and absolute Hodge classes}.
	Compos. Math. \textbf{143}, no. 4 (2007), pp. 945--958.
	\bibitem[Wei79]{Weil} A. Weil,
	\textit{Abelian varieties and the Hodge ring}.
	In: André Weil: Collected Papers, Vol. III, Springer Verlag (1979), pp. 421--429.
\end{thebibliography}
\end{document}